\newtheorem{theorem}{Theorem}[section]
\newtheorem{lemma}[theorem]{Lemma}
\newtheorem{proposition}[theorem]{Proposition}
\newtheorem{corollary}[theorem]{Corollary}
\theoremstyle{definition}
\newtheorem{remark}[theorem]{Remark}
\def\@map#1#2[#3]{\mbox{$#1 \colon\thinspace #2 \longrightarrow #3$}}
\def\map#1#2{\@ifnextchar [{\@map{#1}{#2}}{\@map{#1}{#2}[#2]}}
\newcommand{\torus}{\mathbb{T}^2}
\newcommand{\klein}{\mathbb{K}^2}
\newcommand{\z}{\mathbb{Z}}
\newcommand{\zsz}{\mathbb{Z} \oplus \mathbb{Z}}
\newcommand{\zsdz}{\mathbb{Z} \rtimes \mathbb{Z}}
\newcommand{\ztwo}{\mathbb{Z}_2}
\newcommand{\rtwo}{\mathbb{R}^2}
\newcommand{\id}{\boldsymbol{1}}
\renewcommand{\hom}{{\rm Hom}}
\newcommand{\gsigma}{\overline{\left\langle \sigma^2 \right\rangle}}
\newcommand{\gsigmab}{\overline{\left\langle \sigma^2 \right\rangle}_{\text{Ab}}}
\newcommand{\ab}{{\text{Ab}}}
\renewcommand{\to}{\ensuremath{\longrightarrow}}
\renewcommand{\ker}[1]{\ensuremath{\operatorname{\text{Ker}}\left({#1}\right)}}
\renewcommand{\p@enumii}{}
\begin{document}

\title{The Borsuk-Ulam property for homotopy classes of maps between the torus and the Klein bottle~--~part~2}

\author{DACIBERG LIMA GON\c{C}ALVES
~\footnote{Departamento de Matem\'atica, IME, Universidade de S\~ao Paulo, Rua do Mat\~ao 1010 CEP: 05508-090, S\~ao Paulo-SP, Brazil. 
e-mail: \texttt{dlgoncal@ime.usp.br}}
\and
JOHN GUASCHI
~\footnote{Normandie Univ, UNICAEN, CNRS, LMNO, 14000 Caen, France.
e-mail: \texttt{john.guaschi@unicaen.fr}}
\and
VINICIUS CASTELUBER LAASS
~\footnote{Departamento de Matem\'atica, IME, Universidade Federal da Bahia, Av.\ Adhemar de Barros, S/N Ondina CEP: 40170-110, Salvador-BA, Brazil. 
e-mail: \texttt{vinicius.laass@ufba.br}} 
}

%%%%%%%%%%%%%%%%%%%%%%%%%%%%%%%%%%%%%%%%%%%%%%%%%%%%%%%%%%%%%%%%%%%%%%%%%%%%%%%%%%%%%%%%%%%%%%%%%%%%%%%%%%%%%%%%%%%%%%%%%%%%%%%%%%%%%%%%%%%
% ABSTRACT
%%%%%%%%%%%%%%%%%%%%%%%%%%%%%%%%%%%%%%%%%%%%%%%%%%%%%%%%%%%%%%%%%%%%%%%%%%%%%%%%%%%%%%%%%%%%%%%%%%%%%%%%%%%%%%%%%%%%%%%%%%%%%%%%%%%%%%%%%%%

\maketitle

\begin{abstract}%
\noindent
Let $M$ be a topological space that admits a free involution $\tau$, and let $N$ be a topological space. A homotopy class $\beta \in [ M,N ]$ is said to have {\it the Borsuk-Ulam property with respect to $\tau$} if for every representative map $\map{f}{M}[N]$ of $\beta$, there exists a point $x \in M$ such that $f(\tau(x))= f(x)$. In this paper, we determine the homotopy class of maps from the $2$-torus $\torus$ to the Klein bottle $\klein$ that possess the Borsuk-Ulam property with respect to any free involution of $\torus$ for which the orbit space is $\klein$. Our results are given in terms of a certain family of homomorphisms involving the fundamental groups of $\torus$ and $\klein$. This completes the analysis of the Borsuk-Ulam problem for the case $M=\torus$ and $N=\klein$, and for any free involution $\tau$ of $\torus$.  
%\comj{Perhaps mention that this completes the analysis of the Borsuk-Ulam problem for the case $M=\torus$ and $N=\klein$?}{\bf !!! DV !!! Here is an attempt.} This completes the analysis of the Borsuk-Ulam problem for the case $M=\torus$ and $N=\klein$, for all free involution $\tau$ on $\torus$.  
\end{abstract}

\begingroup%Localising the change to `thefootnote'.
\renewcommand{\thefootnote}{}%Removing the footnote symbol.
\footnotetext{Key words: Borsuk-Ulam theorem, homotopy class, braid groups, surfaces.}
\endgroup

%%%%%%%%%%%%%%%%%%%%%%%%%%%%%%%%%%%%%%%%%%%%%%%%%%%%%%%%%%%%%%%%%%%%%%%%%%%%%%%%%%%%%%%%%%%%%%%%%%%%%%%%%%%%%%%%%%%%%%%%%%%%%%%%%%%%%%%%%%%
% INTRODUCTION
%%%%%%%%%%%%%%%%%%%%%%%%%%%%%%%%%%%%%%%%%%%%%%%%%%%%%%%%%%%%%%%%%%%%%%%%%%%%%%%%%%%%%%%%%%%%%%%%%%%%%%%%%%%%%%%%%%%%%%%%%%%%%%%%%%%%%%%%%%%

\section{Introduction}\label{sec:introduction}

The classical Borsuk-Ulam theorem states that for all $n\in \mathbb{N}$ and any continuous map $\map{f}{\mathbb{S}^n}[\mathbb{R}^n]$, there exists a point $x\in \mathbb{S}^n$ such that $f(-x)=f(x)$~\cite[Satz~II]{Bo}. This result has since been  generalised in many directions, and the reader may consult the extensive survey~\cite{Sten}, the book~\cite{Mato}, as well as the papers~\cite{BiaMat,CotVen,Dol,Fad,FadHus,Izy,IzyJaw,Mar,MarMatSan} (note that this list is by no means exhaustive). One such generalisation consists in the study of the validity of the theorem when we replace $\mathbb{S}^n$ and $\mathbb{R}^n$ by manifolds $M$ and $N$ respectively, and we replace the antipodal map of $\mathbb{S}^n$ by a free involution $\tau$ of $M$. More precisely, the triple $(M,\tau;N)$ is said to have the Borsuk-Ulam property if for any continuous map $\map{f}{M}[N]$, there exists a point $x \in M$ for  which $f(\tau(x))=f(x)$. Some examples of results in this direction may be found in~\cite{BarGonVen,DesPerVen,Gon,GonGua,GonSan}. Very recently, the following more refined Borsuk-Ulam-type problem was introduced by the authors in the context of homotopy classes of maps from $M$ to $N$~\cite{GonGuaLaa1}. If $\beta\in [M,N]$ is a homotopy class of maps between $M$ and $N$, $\beta$ is said to have the Borsuk-Ulam property with respect to $\tau$ if for every map $f\in \beta$, there exists a point $x\in M$ (that depends on $f$) such that $f(\tau(x))=f(x)$. If a triple $(M, \tau; N)$ satisfies the Borsuk-Ulam property, then it is certainly the case that every homotopy class of maps between $M$ and $N$ satisfies the Borsuk-Ulam property with respect to $\tau$. The study of the converse leads to an interesting and delicate  question that was posed in~\cite{GonGuaLaa1}, namely the classification of those elements of $[M,N]$ that satisfy the Borsuk-Ulam property with respect to the possible  free involutions $\tau$ of $M$. In that paper, the authors solved this problem in the cases where $M=N$ and $M$ is either the $2$-torus $\torus$ or the Klein bottle $\klein$. It is then natural to consider the case $M=\torus$ and $N=\klein$. In this case, if $\tau$ is a free involution of $\torus$ then the corresponding orbit space $\torus/\langle\tau\rangle$ is either $\torus$ and $\klein$. In the first case, where $\torus/\langle\tau\rangle=\torus$, the authors recently determined the elements of the set $[\torus,\klein]$ that possess the Borsuk-Ulam property with respect to $\tau$~\cite{GonGuaLaa2}. The current paper is a continuation of~\cite{GonGuaLaa2}, in the sense that we determine the elements of the set $[\torus,\klein]$ that possess the Borsuk-Ulam property with respect to $\tau$ in the second case, where $\torus/\langle\tau\rangle=\klein$. In each of the two cases, by~\cite[Proposition~21]{GonGuaLaa1} there is only one class of free involutions, and by~\cite[Proposition~8]{GonGuaLaa1}, it suffices to consider a specific free involution of $\torus$. 

%More recently, the authors determined the elements of the set $[\torus,\klein]$ that have the Borsuk-Ulam property with respect to a free involution $\tau_1$ \comj{perhaps replace `a free involution $\tau_1$' by `free involutions' because the classification is up to the equivalence relation on $\torus$? Or perhaps explain in a little more detail that basically $\tau_1$ and $\tau_2$ are the `only' free involutions? This question is mentioned later (after the statement of Theorem~\ref{th:BORSUK_TAU_2}), but it might be a good idea to mention it here too.} of $\torus$ for which the orbit space is also $\torus$~\cite{GonGuaLaa2}. This paper is a continuation of~\cite{GonGuaLaa2} in the sense that we consider a free involution $\tau_2$ of $\torus$ for which the orbit space is $\klein$. {\bf !!! DV !!! Based on your comment, we suggested to replace the 2 previous phrases by:} More recently, the authors determined the elements of the set $[\torus,\klein]$ that have the Borsuk-Ulam property with respect to free involutions of $\torus$ for which the orbit space is also $\torus$~\cite{GonGuaLaa2}. This paper is a continuation of~\cite{GonGuaLaa2} in the sense that we consider free involutions $\tau$ of $\torus$ for which the orbit space is $\klein$ and we determined the elements of the set $[\torus,\klein]$ that posses the Borsuk-Ulam property with respect to $\tau$.
 
In order to state Theorem~\ref{th:BORSUK_TAU_2}, which is the main result of this paper, we first recall some facts and notation. As in~\cite[Theorems~12 and~19]{GonGuaLaa1}, we identify $\pi_1 (\torus,\ast )$ and $\pi_1 (\klein, \ast)$ with the free Abelian group $\zsz$ and the (non-trivial) semi-direct product $\zsdz$ respectively. Consider the following short exact sequence:
\begin{align}
& 1 \to \pi_1(\torus)= \zsz \stackrel{i_2}{\longrightarrow}  \pi_1(\klein)= \zsdz \stackrel{\theta_2}{\longrightarrow} \ztwo \to 1,\label{eq:homo_tau_2}
\end{align}
where the homomorphisms $i_{2}$ and $\theta_{2}$ are defined by:
\begin{equation*}
\text{$i_2\colon\thinspace\begin{cases}
	 (1,0) \longmapsto (1,0)\\
	 (0,1) \longmapsto (0,2)
	\end{cases}$ and 
$\theta_2\colon\thinspace\begin{cases}
	(1,0) \longmapsto \overline{0}\\
	(0,1) \longmapsto \overline{1}.
	\end{cases}$}
\end{equation*}
By standard results in covering space theory, there exists a double covering $\map{c_2}{\torus}[\klein]$ whose induced homomorphism on the level of fundamental groups is $i_2$. If $\map{\tau_2}{\torus}$ is the non-trivial deck transformation associated with $c_2$, then it is a free involution. Further, $\tau_{2}$ lifts to a homeomorphism $\map{\widehat{\tau}_{2}}{\rtwo}$, where $\widehat{\tau}_{2}(x,y)=(x+\frac{1}{2},1-y)$ for all $(x,y)\in \rtwo$.

We recall an appropriate algebraic description of the set $[\torus,\klein]$ that was given in~\cite[Proposition~1.1 and Remark~1.2]{GonGuaLaa2}.

\begin{proposition}\label{prop:set_homotopy}
The set $[\torus,\klein]$ is in bijection with the subset of $\hom(\zsz,\zsdz)$ whose elements are described as follows:	
\begin{multicols}{2}
\setlength{\parskip}{0.2\baselineskip}

Type 1: ${\allowdisplaybreaks
\begin{cases}
 (1,0) \longmapsto (i,2s_1+1)\\
 (0,1) \longmapsto (0,2 s_2 )
\end{cases}}$

Type 3: ${\allowdisplaybreaks
\begin{cases}
 (1,0) \longmapsto (0,2s_1)\\
 (0,1) \longmapsto (i,2 s_2 +1)
\end{cases}}$

Type 2: ${\allowdisplaybreaks
\begin{cases}
 (1,0) \longmapsto (i,2s_1+1)\\
 (0,1) \longmapsto (i,2 s_2+1)
\end{cases}}$

Type 4: ${\allowdisplaybreaks
\begin{cases}
 (1,0) \longmapsto (r_1,2s_1)\\
 (0,1) \longmapsto (r_2,2 s_2 ),
\end{cases}}$
\end{multicols}
\noindent where $i \in \{0,1 \}$ and $s_1, s_2 \in \z$ for Types~1,~2 and~3, and $r_1, r_2, s_1, s_2 \in \z$ and $r_1 \geq 0$ for Type~4.
\end{proposition}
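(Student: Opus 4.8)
The plan is to exploit the fact that both $\torus$ and $\klein$ are aspherical closed surfaces, so that in particular $\klein$ is a $K(\pi,1)$. By the standard classification of maps into aspherical spaces, the set of free homotopy classes $[\torus,\klein]$ is in bijection with the set of $\zsdz$-conjugacy classes of homomorphisms in $\hom(\pi_1(\torus),\pi_1(\klein))=\hom(\zsz,\zsdz)$, where two homomorphisms are identified precisely when they differ by composition with an inner automorphism of the target. Thus the whole problem reduces to two tasks: first, listing all homomorphisms $\zsz\to\zsdz$; and second, selecting one representative from each conjugacy class, which should match the four types.

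For the first task, recall that $\zsz$ is free Abelian on the generators $(1,0)$ and $(0,1)$, so a homomorphism $\phi$ is completely determined by the pair $\phi(1,0)=(a_1,b_1)$ and $\phi(0,1)=(a_2,b_2)$, subject to the single requirement that these images commute in $\zsdz$, since $(1,0)$ and $(0,1)$ commute. Using the multiplication rule $(a,b)\cdot(a',b')=(a+(-1)^{b}a',\,b+b')$ of the semi-direct product, the commutativity relation $\phi(1,0)\phi(0,1)=\phi(0,1)\phi(1,0)$ simplifies to $a_1(1-(-1)^{b_2})=a_2(1-(-1)^{b_1})$. I would then split into four cases according to the parities of $b_1$ and $b_2$: if both are even the relation is vacuous, giving the raw form of Type~4; if $b_1$ is odd and $b_2$ even it forces $a_2=0$, giving Type~1; if $b_1$ is even and $b_2$ odd it forces $a_1=0$, giving Type~3; and if both are odd it forces $a_1=a_2$, giving Type~2. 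This provides a complete description of $\hom(\zsz,\zsdz)$ before passing to conjugacy.

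For the second task, I would compute the effect of conjugating by an arbitrary element $(c,d)\in\zsdz$, obtaining $(a,b)\mapsto((-1)^{d}a+(1-(-1)^{b})c,\,b)$; note that the second coordinate, and hence the parity case, is preserved. When the relevant exponent $b$ is odd, the first coordinate can be shifted by the arbitrary even integer $2c$ and optionally negated, so it may be normalised to its residue $i\in\{0,1\}$ modulo $2$; applying this to the odd coordinates in Types~1,~2 and~3 produces exactly the stated canonical forms, the constraint $a_1=a_2=i$ in Type~2 being preserved because both generators are conjugated by the same element. For Type~4, where both $b_1,b_2$ are even, conjugation can only multiply $a_1$ and $a_2$ simultaneously by $(-1)^{d}$, so the sole available normalisation is a simultaneous sign change, which lets us arrange $r_1=a_1\geq 0$.

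The main obstacle I expect is precisely the conjugacy analysis for Type~4, where one must check that the canonical forms are both exhaustive and irredundant. Because only a simultaneous sign flip is permitted, the representatives with $r_1>0$ are pairwise non-conjugate, but in the degenerate case $r_1=0$ the homomorphisms with parameters $(0,r_2)$ and $(0,-r_2)$ become conjugate, so a secondary normalisation of $r_2$ is required there to obtain a genuine bijection. Verifying that, after these normalisations, distinct entries of the four lists lie in distinct conjugacy classes, equivalently that the proposed set of homomorphisms meets every conjugacy class exactly once, is the delicate point that completes the proof.
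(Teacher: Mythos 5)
Your proposal is correct and follows essentially the same route the paper relies on: since $\klein$ is aspherical, $[\torus,\klein]$ is identified with the set of conjugacy classes of homomorphisms $\zsz\to\zsdz$ (exactly what Remark~\ref{rem:homotopy_pi1} invokes via Whitehead and~\cite{GonGuaLaa1}), and the four types then come from the parity analysis of the commutation condition followed by normalisation under conjugation, which is the content of the cited proof in~\cite{GonGuaLaa2}. Your closing observation about Type~4 is moreover a genuine refinement of the statement as written: since conjugation by $(c,d)$ sends $(a,b)$ to $((-1)^{d}a+(1-(-1)^{b})c,\,b)$, it can only flip the signs of $r_1$ and $r_2$ simultaneously, so the representatives with $r_1=0$, $r_2\neq 0$ occur in conjugate pairs $(0,\pm r_2)$, and the condition $r_1\geq 0$ alone leaves a residual redundancy that must be removed by an additional normalisation (say $r_2\geq 0$ when $r_1=0$) for the listed set to meet every conjugacy class exactly once.
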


\begin{remark}\label{rem:homotopy_pi1}
The bijection of Proposition~\ref{prop:set_homotopy} may be obtained using standard arguments in homotopy theory that are described in detail in~\cite[Chapter~V, Corollary~4.4]{White}, and more briefly in~\cite[Theorem~4]{GonGuaLaa1}. Within the framework of this paper, it may be defined as follows: given a homotopy class $\beta \in [\torus,\klein]$, there exists a pointed map $\map{f}{(\torus,\ast)}[(\klein,\ast)]$ that gives rise to a representative of $\beta$ if we omit the basepoints. The induced homomorphism $\map{f_\#}{\pi_1(\torus,\ast)}[\pi_1(\klein,\ast)]$ is conjugate to exactly one of the elements of $\hom(\zsz,\zsdz)$, denoted by $\beta_\#$, and described in Proposition~\ref{prop:set_homotopy}. Note that $\beta_\#$ is independent of the choice of $f$.
\end{remark}

The following theorem is the main result of this paper.

\begin{theorem}\label{th:BORSUK_TAU_2}
Let $\beta \in [\torus,\klein]$, and let $\beta_\# \in \hom(\zsz,\zsdz)$. Then $\beta$ has the Borsuk-Ulam property with respect to $\tau_2$ if and only if one of the following conditions is satisfied:
\begin{enumerate}[(a)]
\item $\beta_\#$ is a homomorphism of Type~1, and $s_2$ is even.
\item $\beta_\#$ is a homomorphism of Type~2.
\item $\beta_\#$ is a homomorphism of Type~3, and $s_1 \neq 0$.
\item $\beta_\#$ is a homomorphism of Type~4, and one of the following conditions holds: %\comj{I rearranged the conditions as a new list, as they all correspond to the same Type:}
\begin{enumerate}[(i)]
\item $r_2  s_1 \neq 0$.
\item $r_2 = s_2 = 0$ and $s_1 \neq 0$.
\item $s_1 = s_2 = 0$, $r_1 \neq  0$ and $r_2$ is even.
\end{enumerate}
\end{enumerate}
%		
%Let $\beta \in [ \torus,\klein ]$ and $\beta_\# \in \hom(\zsz,\zsdz )$. Then $\beta$ has the Borsuk-Ulam property with respect to $\tau_2$ if and only if one of the following conditions is satisfied:
%%
%\begin{enumerate}[(a)]
%\item $\beta_\#$ is a homomorphism of Type~1, and $s_2$ is even.
%\item $\beta_\#$ is a homomorphism of Type~2.
%\item $\beta_\#$ is a homomorphism of Type~3, and $s_1 \neq 0$.
%\item $\beta_\#$ is a homomorphism of Type~4, and $r_2 .  s_1 \neq 0 $ or $r_1 >0$ and $r_2$ and $s_2$ are even.
%\end{enumerate}
%		
\end{theorem}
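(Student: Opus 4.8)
The plan is to reduce the Borsuk-Ulam property to a purely algebraic question about the existence of a homomorphism into the braid group $B_2(\klein)=\pi_1(D_2(\klein))$ of the Klein bottle, where $D_2(\klein)$ denotes the unordered configuration space of two points of $\klein$, and then to settle that question type by type.

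First I would set up the geometric-to-algebraic translation. Given a map $\map{f}{\torus}[\klein]$ representing $\beta$, define $\map{\Phi_f}{\torus}[\klein\times\klein]$ by $\Phi_f(x)=(f(x),f(\tau_2(x)))$. Then $\beta$ fails to have the Borsuk-Ulam property with respect to $\tau_2$ precisely when some representative $f$ satisfies $f(\tau_2(x))\neq f(x)$ for all $x$, i.e.\ when $\Phi_f$ maps into the ordered configuration space $F_2(\klein)=\klein\times\klein\setminus\Delta$. Since $\Phi_f\circ\tau_2$ equals $\Phi_f$ followed by the coordinate swap, $\Phi_f$ is $\ztwo$-equivariant and descends to a map $\map{\overline{\Phi}_f}{\klein}[D_2(\klein)]$ on the orbit space $\klein=\torus/\langle\tau_2\rangle$. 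Using that $\klein$, $F_2(\klein)$ and $D_2(\klein)$ are all aspherical (the Klein bottle being neither $\mathbb{S}^2$ nor $\mathbb{RP}^2$), homotopy classes of maps are detected by conjugacy classes of homomorphisms on $\pi_1$. This yields the criterion I will use: \emph{$\beta$ fails to have the Borsuk-Ulam property with respect to $\tau_2$ if and only if there exists a homomorphism $\map{\varphi}{\zsdz}[B_2(\klein)]$ such that $\sigma\circ\varphi=\theta_2$, where $\map{\sigma}{B_2(\klein)}[\ztwo]$ is the permutation homomorphism, and such that $(p_1)_\#\circ\varphi\circ i_2$ is conjugate to $\beta_\#$, where $\map{(p_1)_\#}{P_2(\klein)}[\zsdz]$ is induced by forgetting the second point.} The two constraints are the exact algebraic shadows of the equivariance of $\Phi_f$ and of the relation $f=p_1\circ\Phi_f$; the realisation direction uses asphericity to promote such a $\varphi$ to an honest coincidence-free $f\in\beta$.

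Next I would fix explicit presentations of $B_2(\klein)$ and of its pure subgroup $P_2(\klein)$, together with formulas for $\sigma$ and $(p_1)_\#$, which are available from the braid-group literature on the Klein bottle and were already used in the companion paper. Writing $\pi_1(\klein)=\langle x,y \mid y x y^{-1}=x^{-1}\rangle$ with $x=(1,0)$, $y=(0,1)$, a candidate $\varphi$ is completely determined by the two elements $A=\varphi(x)$ and $Y=\varphi(y)$ subject to: (1) $A\in P_2(\klein)$ and $Y\in B_2(\klein)\setminus P_2(\klein)$, forced by $\sigma\circ\varphi=\theta_2$; (2) $YAY^{-1}=A^{-1}$, the defining relation; (3) $(p_1)_\#(A)$ equals the prescribed value $\beta_\#(1,0)$; and (4) $(p_1)_\#(Y^2)$ equals $\beta_\#(0,1)$, using $i_2(0,1)=(0,2)=y^2$. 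The theorem then amounts to deciding, for each of the four types of Proposition~\ref{prop:set_homotopy} and each admissible value of the parameters, whether the system (1)--(4) admits a solution: a solution means the Borsuk-Ulam property fails, and its absence means the property holds.

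The decisive point, and the main obstacle, is the interaction between conditions (2) and (4). Relation (2) forces $A$ to be conjugate to its inverse by the strand-swapping element $Y$; projecting via $(p_1)_\#$ converts conjugation by $Y$ into the second-point projection, so that (2) ties $\beta_\#(1,0)$ to its swapped counterpart and, after abelianising, imposes a $2$-divisibility condition. Simultaneously, $Y\notin P_2(\klein)$ means $(p_1)_\#(Y^2)$ absorbs the half-twist of $B_2(\klein)$, which is exactly what produces the even/odd and non-vanishing alternatives (for instance the parity of $s_2$ in Type~1, the condition $s_1\neq 0$ in Type~3, and the three sub-cases of Type~4). I expect the delicate work to be the non-existence arguments, i.e.\ the cases where the Borsuk-Ulam property holds: there one must show that no pair $(A,Y)$ can simultaneously satisfy (2), (3) and (4), which I would carry out by pushing the relations into a suitable abelian or nilpotent quotient of $B_2(\klein)$ in which the obstruction becomes a solvable-or-not linear congruence in the parameters $r_1,r_2,s_1,s_2$. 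The existence cases, where the property fails, are comparatively routine, requiring only the explicit exhibition of one pair $(A,Y)$ meeting conditions (1)--(4).
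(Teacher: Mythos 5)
Your reduction is sound, and it is essentially the paper's own: writing $Y=b\sigma$ with $b\in P_2(\klein)$, your conditions (1)--(4) on $A=\varphi(x)$, $Y=\varphi(y)$ translate exactly into the criterion of Lemma~\ref{lem:algebra_tau_2} (namely $abl_\sigma(a)=b$, $(p_1)_\#(a)=\alpha_\#(1,0)$, $(p_1)_\#(bl_\sigma(b))=\alpha_\#(0,1)$, using $(p_1)_\#(\sigma^2)=(0,0)$), which the paper in turn obtains from the equivariant-map/asphericity argument of~\cite[Theorem~7]{GonGuaLaa1}. The existence half (the cases where the Borsuk--Ulam property fails) is also correctly described: one exhibits explicit solutions, as the paper does in Propositions~\ref{prop:tau_2_case_14I} and~\ref{prop:tau_2_case_6I}, although even those verifications require nontrivial computation in the decomposition $F(u,v)\rtimes_\theta(\zsdz)$ of $P_2(\klein)$.

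The genuine gap is the non-existence half, which is the core of the theorem and which your proposal compresses into the hope that ``a suitable abelian or nilpotent quotient of $B_2(\klein)$'' turns the obstruction into ``a solvable-or-not linear congruence in $r_1,r_2,s_1,s_2$''. This does not work as stated, for two reasons. First, the abelianisation of $B_2(\klein)$ is far too coarse, and in a nilpotent quotient of bounded class the relevant elements collapse; the paper instead works in $\gsigmab$, the abelianisation of the normal closure of $\sigma^2$, which is free abelian of \emph{infinite} rank on the elements $B_{k,l}$, and must first convert condition~(\ref{eq:algebra_i}) into the equation~(\ref{eq:lema_1}) of Lemma~\ref{lem:lemma_1}, whose unknowns $x,y$ range over this infinite-rank group and are twisted by the induced automorphisms $\theta(m,n)_\ab$, $\rho_\ab$ and $(c_{p,q})_\ab$. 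Second, the obstruction is not a fixed congruence in the parameters of $\beta_\#$: the annihilating homomorphisms $\xi\colon\gsigmab\to\ztwo$ (or $\z$) are chosen \emph{adaptively, after the putative solution}, with defining congruences depending on the solution data $(m,n)$ as well as on the parameters --- for instance, in Proposition~\ref{prop:tau_2_case_5I} one takes $\xi(B_{k,l})=\overline{1}$ exactly when $k\equiv 0$ or $k\equiv 2n-2z-1 \bmod{4s}$ --- and each $\xi$ must be shown to kill the contributions of the unknowns $x$ and $y$ (an equivariance property with respect to the twisted actions) while surviving on the constant terms of the equation. Designing such a $\xi$ case by case is precisely where the conditions ``$s_2$ even'', ``$s_1\neq 0$'', ``$r_2$ even'', etc.\ are actually established; nothing in your proposal supplies this step or a workable substitute for it.
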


%\comj{This changed slightly to take into account the new version of the first paragraph of the introduction:} 

It follows from Theorem~\ref{th:BORSUK_TAU_2} and the remarks in the first paragraph regarding~\cite[Propositions~8 and~21]{GonGuaLaa1} that if $\tau$ is an arbitrary free involution of $\torus$, one may decide which elements of the set $[\torus,\klein]$ possess the Borsuk-Ulam property with respect to $\tau$, which solves the Borsuk-Ulam problem for $[\torus,\klein]$.

%One of the main tools used in this paper is a commutative diagram \comj{The diagram doesn't seem to appear explicitly in the paper. It might be clearer to refer to Lemma~\ref{lem:algebra_tau_2}, and say something about the commutativity with reference to the lemma, and/or refer explicitly to the original diagram?} in which the algebraic objects are fundamental groups and braid groups \comj{or just mention braid groups, because the fundamental groups here are also braid groups?} of the surfaces in question. \comj{Depending in how the first part of this paragraph is written, it may be necessary to modify what follows.} So the solutions of certain equations in the braid groups of some of the surfaces involved play an important r\^ole on solution of the problem. More specifically the fundamental group and the braid group on two string of the Klein bottle.
%{\bf !!! DV !!! Based on your comment, we suggested to replace the above paragraph by the below one. In fact the paragraph below is an attempt.}

One of the main tools used in this paper is the study of a certain two-variable equation in the $2$-string pure braid group of the Klein bottle, as well as some additional information that may be obtained from the fundamental groups of the torus and the Klein bottle. So the solutions of certain equations in the braid groups of some of the surfaces in question play an important r\^ole in the resolution of the Borsuk-Ulam problem for homotopy classes. These equations are derived from a commutative diagram involving fundamental groups and $2$-string braid groups of the surfaces (see~\cite[Theorem~7]{GonGuaLaa1} for more details).

The rest of this paper comprises two sections. In Section~\ref{sec:equalg}, we start by recalling some notation and a number of previous results. In Proposition~\ref{prop:p2_k2}, we describe some relevant properties of the $2$-string pure braid group $P_{2}(\klein)$ of the Klein bottle that appeared in~\cite{GonGuaLaa2}. In Lemma~\ref{lem:algebra_tau_2}, we give an algebraic criterion involving elements of $P_{2}(\klein)$ for a homotopy class to satisfy the Borsuk-Ulam property, and in Lemma~\ref{lem:lemma_1}, we derive a useful necessary condition, in terms of the existence of solutions to a certain equation in a free Abelian group of infinite rank, for a given homotopy class to satisfy this property. Section~\ref{sec:borsuk_2} of the paper is devoted to proving Theorem~\ref{th:BORSUK_TAU_2}. The proof will follow from Propositions~\ref{prop:tau_2_case_23I}--\ref{prop:tau_2_case_6I} whose statements correspond to the types of homotopy classes given by Proposition~\ref{prop:set_homotopy}.

%%%%%%%%%%%%%%%%%%%%%%%%%%%%%%%%%%%%%%%%%%%%%%%%%%%%%%%%%%%%%%%%%%%%%%%%%%%%%%%%%%%%%%%%%%%%%%%%%%%%%%%%%%%%%%%%%%%%%%%%%%%%%%%%%%%%%%%%%
% Proof of Theorem~\ref{th:BORSUK_TAU_2}
%%%%%%%%%%%%%%%%%%%%%%%%%%%%%%%%%%%%%%%%%%%%%%%%%%%%%%%%%%%%%%%%%%%%%%%%%%%%%%%%%%%%%%%%%%%%%%%%%%%%%%%%%%%%%%%%%%%%%%%%%%%%%%%%%%%%%%%%%

\section{Preliminaries and algebraic criteria}\label{sec:equalg}
%\section{Proof of Theorem~\ref{th:BORSUK_TAU_2}}\label{sec:borsuk_2}

%Let $\alpha \in [ \torus,\ast ; \klein,\ast]$ be a pointed homotopy class, and let $\beta \in [ \torus,\klein]$ be the corresponding free homotopy class. If $\map{f}{(\torus , \ast)}[(\klein , \ast)]$ is a representative of $\alpha$, let $\alpha_\#$ denote the induced homomorphism $\map{f_\#}{\pi_1 (\torus , \ast)}[\pi_1 (\klein , \ast)]$. \comj{The following shortened a little.} \comj{$\alpha_\mathcal{F}$ was defined here, but it isn't used immediately (or perhaps at all).} By~\cite[Theorem~7(b)]{GonGuaLaa1}, $\alpha$ has the Borsuk-Ulam property with respect to the free involution $\tau_2$ if and only if $\beta$ does. So to obtain Theorem~\ref{th:BORSUK_TAU_2}, it suffices to prove the result for pointed homotopy classes. We will make use of the following properties of the $2$-string pure braid group $P_2 (\klein)$ of $\klein$ that were analysed in~\cite[Section~3]{GonGuaLaa2}.
%
%{\bf !!! V !!! sugest\~ao de nova reda\c c\~ao do par\'agrafo anterior, uma vez que a express\~ao ``the corresponding free homotopy class'' n\~ao \'e usual (pelo menos para mim):}

Let $\alpha=[f] \in [ \torus,\ast ; \klein,\ast]$ be a pointed homotopy class, let $\beta \in [ \torus,\klein]$ be the homotopy class for which $f$ is a representative map if we omit the basepoints, and let $\map{\tau_2}{\torus}$ be the free involution defined in the Introduction. By~\cite[Theorem~7(b)]{GonGuaLaa1}, $\alpha$ has the Borsuk-Ulam property with respect to the free involution $\tau_2$ if and only if $\beta$ does. So to prove Theorem~\ref{th:BORSUK_TAU_2}, it suffices to restrict our attention to pointed homotopy classes. Let $\alpha_\#$ denote the induced homomorphism $\map{f_\#}{\pi_1 (\torus , \ast)}[\pi_1 (\klein , \ast)]$. We will make use of the following properties of the $2$-string pure braid group $P_2 (\klein)$ of $\klein$ that were derived in~\cite[Section~3]{GonGuaLaa2}.
%With the notation of~\cite[Theorem~4]{GonGuaLaa1}, suppose that $\alpha_\mathcal{F}=\beta$. According to~\cite[Theorem~7]{GonGuaLaa1}, the pointed homotopy class $\alpha$ has the Borsuk-Ulam property with respect to the free involution $\tau_2$ if and only if the homotopy class $\beta$ has the Borsuk-Ulam property with respect to $\tau_2$. 
%If $\alpha=[f]$, we denote by $\alpha_\#$ the induced homomorphism on the level of fundamental groups $f_\# : \pi_1 (\torus,\ast) \to \pi_1 (\klein, \ast)$. So to obtain Theorem~\ref{th:BORSUK_TAU_2}, it suffices to prove the statement for pointed homotopy classes. To do this we 
%use the structure of the $2$-string pure braid group $P_2 (\klein)$ of $\klein$ that was analysed in~\cite[Section~3]{GonGuaLaa2}. We summarise some facts in the following result:
%
\begin{proposition}\cite[Propositions~3.1, 3.3 and~3.5]{GonGuaLaa2}\label{prop:p2_k2}
The group $P_2(\klein)$ is isomorphic to the semi-direct product $F(u,v) \rtimes_\theta (\zsdz)$, where $F(u,v)$ is the free group of rank $2$ on the set $\{u,v\}$, and the action $\map{\theta}{\zsdz}[\operatorname{Aut}(F(u,v))]$ is defined as follows:
\begin{equation*}
\theta(m,n): \begin{cases}
u \longmapsto B^{m-\delta_n} u^{\varepsilon_n} B^{-m+\delta_n}\\
v \longmapsto B^m v u^{-2m} B^{-m+\delta_n}\\
B  \longmapsto B^{\varepsilon_n},
\end{cases} 
\end{equation*}
where $\delta_n=\begin{cases}
0 & \text{if $n$ is even}\\
1 & \text{if $n$ is odd,}
\end{cases}$ $\varepsilon_n=(-1)^n$ and $B=uvuv^{-1}$. With respect to this decomposition, the following properties hold:
\begin{itemize}
\item the standard Artin generator $\sigma \in B_2(\klein)$ satisfies $\sigma^2 =(B ; 0,0)$.
	
\item if $\map{l_\sigma}{P_2(\klein)}$ is the homomorphism defined by $l_\sigma(b)=\sigma b \sigma^{-1}$ for all $b \in P_2 (\klein)$, then:
\begin{align*}
l_\sigma(u^r;0,0)&=((Bu^{-1})^r B^{-r} ; r,0) & l_\sigma(\id;m,0)&=(\id ; m,0)\\
l_\sigma(v^s;0,0)&=((uv)^{-s} (u B)^{\delta_s} ; 0,s) & l_\sigma(\id;0,n) &=(B^{\delta_n} ; 0,n)\\
l_\sigma(B;0,0)&=(B ; 0,0) &&
\end{align*}
for all $m,n,r,s \in \z$, where the symbol $\id$ denotes the trivial element of $F(u,v)$.

\item if $\map{p_1}{F_2 (\klein)}[\klein]$ is the map defined by $p_1(x,y)=x$, then the induced homomorphism $\map{(p_1)_\#}{P_2(\klein)}[\pi_1(\klein)=\zsdz]$ satisfies $(p_1)_\# (w; r,s)=(r,s)$.
\end{itemize}
Given an element $w \in F(u,v)$, let $\rho(w) \in F(u,v)$ and $g(w) \in \zsdz$ such that $l_\sigma(w ; 0, 0)=(\rho(w),g(w))$. Then $\map{g}{F(u,v)}[\zsdz]$ is the homomorphism defined on the basis $\{ u , v \}$ by:
\begin{equation*}
\begin{cases}
g(u)=(1,0)\\
g(v)=(0,1).
\end{cases}
\end{equation*}
Let $\gsigma$ be the normal closure of the element $\sigma^2$. 
%Then $\gsigma$ is isomorphic to the group $\ker g$ which is the free group of enumerable rank on the set 
Up to isomorphism, $\gsigma$ may be identified with the group $\ker g$ which is the free group of infinite countable rank on the set $\{ B_{k,l} \}_{k,l\in \z}$, where $B_{k,l} =v^k u^l B u^{-l} v^{-k}$ for all $k,l \in \z$.
%
%$$\{ B_{k,l} :=v^k u^l B u^{-l} v^{-k}, k,l \in \z \}.$$
%
With respect to this description, the action $\map{\theta}{\zsdz}[\operatorname{Aut}F(u,v)]$ and the map $\map{\rho}{F(u,v)}$ induce homomorphisms $\zsdz \to \gsigma$ and $\gsigma \to \gsigma$ respectively, which we also denote by $\theta$ and $\rho$ respectively. Let $w \in F(u,v)$, and let $g(w)=(r,s)$. Then there exists a unique element $x \in \gsigma$ such that $w=u^r v^s x$.
\end{proposition}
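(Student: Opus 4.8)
The plan is to prove the final assertion directly, treating the structural description of $P_2(\klein)$, the action $\theta$, the formulas for $l_\sigma$, and the identification of $\gsigma$ with $\ker{g}$ as established facts imported from~\cite{GonGuaLaa2}. What remains is the coset-representative statement: for $w \in F(u,v)$ with $g(w) = (r,s)$, there exists a unique $x \in \gsigma$ such that $w = u^r v^s x$. The essential preliminary computation is that the specific word $u^r v^s$ is itself sent to $(r,s)$ by $g$, so that it serves as a distinguished preimage of $(r,s)$.

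First I would compute $g(u^r v^s)$. Since $g$ is the homomorphism with $g(u)=(1,0)$ and $g(v)=(0,1)$, and since in $\zsdz$ the second coordinate acts on the first by the sign $(-1)^{(\cdot)}$, one finds $g(u^r) = (r,0)$ and $g(v^s) = (0,s)$, whence $g(u^r v^s) = (r,0)(0,s) = (r,s)$. The only subtlety here is that $\zsdz$ is non-abelian, so the ordering of the two powers matters: placing the $u$-power before the $v$-power is exactly what makes the product collapse to $(r,s)$ rather than to some element with a sign-corrected first coordinate.

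Existence and uniqueness then follow at once. For existence, set $x = (u^r v^s)^{-1} w$; applying $g$ and using $g(u^r v^s) = (r,s) = g(w)$ gives $g(x) = (r,s)^{-1}(r,s) = (0,0)$, so $x \in \ker{g} = \gsigma$, and $w = u^r v^s x$ holds by construction. For uniqueness, an equality $u^r v^s x = u^r v^s x'$ with $x, x' \in \gsigma$ forces $x = x'$ by left-cancellation in the free group $F(u,v)$. I expect no genuine obstacle: the statement amounts to the choice of a distinguished set-theoretic section $(r,s) \mapsto u^r v^s$ of the short exact sequence $1 \to \ker{g} \to F(u,v) \stackrel{g}{\longrightarrow} \zsdz \to 1$, so the entire content reduces to the non-commutative bookkeeping in the computation of $g(u^r v^s)$, with everything else following from the freeness of $F(u,v)$.
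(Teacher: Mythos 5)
Your proposal is correct and takes essentially the same approach as the paper: the paper offers no proof of this proposition beyond the citation to~\cite{GonGuaLaa2}, importing the structural facts exactly as you do, and the final coset-decomposition assertion is the routine consequence you describe. Your key computation $g(u^r v^s)=(r,0)(0,s)=(r,s)$ (where the order $u^r v^s$ matters precisely because $\zsdz$ is non-abelian), together with existence via $x=(u^rv^s)^{-1}w \in \ker{g}$ and uniqueness by cancellation in $F(u,v)$, is exactly right.
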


The following algebraic criterion, similar to that of~\cite[Lemma~23]{GonGuaLaa1}, will be used in what follows to decide whether a pointed homotopy class possesses the Borsuk-Ulam property with respect to $\tau_2$.

\begin{lemma}\label{lem:algebra_tau_2}
A pointed homotopy class $\alpha \in [\torus,\ast ; \klein, \ast]$ does not have the Borsuk-Ulam property with respect to $\tau_2$ if and only if there exist $a,b \in P_2 (\klein)$ such that:
\begin{enumerate}[(i)]

\item\label{eq:algebra_i} $a b l_\sigma(a)=b$.

\item\label{eq:algebra_ii} $(p_1)_\# (a)= \alpha_\# (1,0)$.

\item\label{eq:algebra_iii} $(p_1)_\# (b l_\sigma(b))=\alpha_\# (0,1)$.
\end{enumerate}
\end{lemma}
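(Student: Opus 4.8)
The plan is to translate the geometric Borsuk-Ulam statement into the existence of a certain pair of braids via the fibration-and-braid-group machinery cited from~\cite[Theorem~7]{GonGuaLaa1}, and then to unravel what that machinery produces into the three explicit equations~\eqref{eq:algebra_i}--\eqref{eq:algebra_iii}. The starting point is the standard observation that $\alpha$ \emph{fails} the Borsuk-Ulam property with respect to $\tau_2$ precisely when some representative $\map{f}{\torus}[\klein]$ admits no coincidence of the pair $(f\circ\tau_2, f)$, i.e.\ $f(\tau_2(x))\neq f(x)$ for all $x\in\torus$. Such a coincidence-free pair of maps is exactly the data of a lift of $f$ to the configuration space $F_2(\klein)$ along the orbit projection $\torus\to\klein$, and this lifting problem is governed on the level of fundamental groups by a map into $B_2(\klein)$. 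I would first set up this correspondence carefully, recording that the free involution $\tau_2$ has orbit space $\klein$ and that $\torus\to\klein$ is the double cover $c_2$ with deck transformation $\tau_2$.

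Next I would make the algebra explicit. Choosing generators $x_1=(1,0)$ and $x_2=(0,1)$ of $\pi_1(\torus)=\zsz$, the absence of coincidences yields a homomorphism $\map{\varphi}{\pi_1(\torus)}[B_2(\klein)]$ whose images $a=\varphi(x_1)$ and $b=\varphi(x_2)$ must satisfy three constraints. The first constraint is that $\varphi$ be well-defined on $\zsz$, that is, it must respect the single commutator relation $[x_1,x_2]=1$ of the torus group; after conjugating one generator by $\sigma$ (which encodes how the involution $\tau_2$ interchanges the two strings), this relation becomes precisely~\eqref{eq:algebra_i}, namely $a\,b\,l_\sigma(a)=b$. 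Here $l_\sigma(\cdot)=\sigma(\cdot)\sigma^{-1}$ is the conjugation recorded in Proposition~\ref{prop:p2_k2}, and its appearance is exactly the $\z_2$-twist coming from the deck transformation. The remaining two constraints~\eqref{eq:algebra_ii} and~\eqref{eq:algebra_iii} say that composing $\varphi$ with the projection $(p_1)_\#\colon P_2(\klein)\to\pi_1(\klein)$ recovers the prescribed induced homomorphism $\alpha_\#$ on each generator; the asymmetry between the two conditions (one involves $a$ alone, the other involves $b\,l_\sigma(b)$) reflects that $x_1$ lifts directly whereas $x_2$, which is the generator whose class under $\theta_2$ is nontrivial, must be tracked through the covering using the twisted product $b\,l_\sigma(b)$.

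The proof then proceeds by verifying the two implications. For the forward direction I would assume no Borsuk-Ulam property, extract a coincidence-free representative, and read off $a,b\in P_2(\klein)$ (rather than $B_2(\klein)$: the purity is forced because both strings return to distinct, well-defined basepoints) satisfying~\eqref{eq:algebra_i}--\eqref{eq:algebra_iii}. For the converse I would assume such $a,b$ exist, assemble them into a homomorphism $\map{\varphi}{\pi_1(\torus)}[B_2(\klein)]$ lifting the appropriate boundary data, and use the fact that $F_2(\klein)$ is aspherical (being the configuration space of an aspherical surface) to realise $\varphi$ by an actual coincidence-free map, whence $\alpha$ fails the property. Both directions are essentially applications of the commutative diagram of~\cite[Theorem~7]{GonGuaLaa1} specialised to $M=\torus$, $N=\klein$ and the involution $\tau_2$, together with the explicit formulas of Proposition~\ref{prop:p2_k2}.

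The main obstacle I anticipate is pinning down the precise form of the twisted relation~\eqref{eq:algebra_i} and the twisted projection condition~\eqref{eq:algebra_iii}: one must correctly account for how the deck transformation $\tau_2$ acts on the second factor of the configuration space and hence introduces the conjugation $l_\sigma$ exactly once in the commutator relation, and how the generator $x_2$ (the one sent by $\theta_2$ to $\overline 1$) is detected through $b\,l_\sigma(b)$ rather than through $b$ alone. Getting these twists right is the crux; once the dictionary between coincidence-free lifts and solutions $(a,b)$ is established, the equivalence follows formally. I would therefore devote the bulk of the argument to a careful bookkeeping of the $\sigma$-conjugations induced by $\tau_2$, and cite~\cite[Lemma~23]{GonGuaLaa1} as the template whose structure this lemma parallels.
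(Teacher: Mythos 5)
Your high-level framework is the correct one, and it is in fact the approach the paper intends: the paper's own proof simply defers to \cite[Lemma~23]{GonGuaLaa1}, whose argument runs through exactly the coincidence-free-map/configuration-space dictionary of \cite[Theorem~7]{GonGuaLaa1} that you describe, with Proposition~\ref{prop:p2_k2} supplying the structure of $P_2(\klein)$. However, your execution of what you yourself identify as ``the crux'' --- the bookkeeping that produces condition~(\ref{eq:algebra_i}) --- is wrong, and as written it would yield a different (incorrect) criterion. A representative $f$ with $f\circ\tau_2$ coincidence-free descends to a map of the \emph{orbit space} $\torus/\langle\tau_2\rangle=\klein$ into the unordered configuration space of $\klein$, so the homomorphism carrying the equivariance data is $\map{\varphi}{\pi_1(\klein)=\zsdz}[B_2(\klein)]$, subject to $\pi\circ\varphi=\theta_2$, where $\map{\pi}{B_2(\klein)}[\ztwo]$ is the permutation homomorphism --- it is \emph{not} a homomorphism on $\pi_1(\torus)=\zsz$ as you state. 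Compatibility with $\theta_2$ forces $\varphi(1,0)=:a$ to be pure (since $\theta_2(1,0)=\overline{0}$) and forces $\varphi(0,1)$ to be \emph{non-pure} (since $\theta_2(0,1)=\overline{1}$); so your parenthetical claim that both images lie in $P_2(\klein)$ ``because both strings return to distinct basepoints'' is false. One writes $\varphi(0,1)=b\sigma$ with $b\in P_2(\klein)$ the pure part, and condition~(\ref{eq:algebra_i}) is then the image under $\varphi$ of the defining relation $(0,1)(1,0)(0,1)^{-1}=(1,0)^{-1}$ of the Klein bottle group: $(b\sigma)a(b\sigma)^{-1}=a^{-1}$, which is equivalent to $a\,b\,l_\sigma(a)=b$.

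By contrast, your derivation imposes the torus relation $[x_1,x_2]=1$ on the pair $(a,b\sigma)$ ``after conjugating one generator by $\sigma$''. That computation gives $(b\sigma)^{-1}a(b\sigma)=a$, i.e.\ $b^{-1}ab=l_\sigma(a)$, i.e.\ $a\,b\,l_\sigma(a)^{-1}=b$, which differs from~(\ref{eq:algebra_i}) by an inverse and is not equivalent to it: the commutator relation belongs to the case where the orbit space is $\torus$ (treated in \cite{GonGuaLaa2}), whereas here the orbit space is $\klein$ and the relation is of Klein-bottle type. Moreover, if $\varphi$ were genuinely defined only on $\pi_1(\torus)$, its data would be the restriction $\varphi\circ i_2$, which sees only the pure braids $a$ and $(b\sigma)^2=b\,l_\sigma(b)\sigma^2$; commutativity of those two elements is strictly weaker than~(\ref{eq:algebra_i}) and discards the $\ztwo$-equivariance altogether, so the converse direction of the lemma (reconstructing an equivariant map from the algebraic data) would fail. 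The rest of your outline is sound once this is repaired: condition~(\ref{eq:algebra_ii}) holds because $i_2(1,0)=(1,0)$, and condition~(\ref{eq:algebra_iii}) holds because $i_2(0,1)=(0,1)^2$ in $\zsdz$, whence $\alpha_\#(0,1)=(p_1)_\#\bigl((b\sigma)^2\bigr)=(p_1)_\#\bigl(b\,l_\sigma(b)\sigma^2\bigr)=(p_1)_\#\bigl(b\,l_\sigma(b)\bigr)$, using $\sigma^2=(B;0,0)$ and $(p_1)_\#(B;0,0)=(0,0)$ from Proposition~\ref{prop:p2_k2}.
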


\begin{proof}
The proof is similar to that of~\cite[Lemma~23]{GonGuaLaa1}, using Proposition~\ref{prop:p2_k2} instead of~\cite[Theorem~12]{GonGuaLaa1}, and the details are left to the reader.
\end{proof}

\begin{corollary}\label{cor:reduction_tau_2}
Let $\alpha , \alpha' \in [ \torus , \ast ; \klein , \ast]$ be pointed homotopy classes, and suppose that:
\begin{equation*}
\text{$\alpha_\#: \begin{cases}
	 (1,0) \longmapsto (r_1 , s_1)\\
	 (0,1) \longmapsto (r_2 , s_2)
	\end{cases}$
and $\alpha'_\#: \begin{cases}
	 (1,0) \longmapsto (r_1 , s_1)\\
	 (0,1) \longmapsto (r_2 , s_2') 
	\end{cases}$}
\end{equation*}
for some $r_1, r_2 , s_1 , s_2 , s_2' \in \z$. If $s_2 \equiv s_2' \bmod 4$ then $\alpha$ has the Borsuk-Ulam property with respect to $\tau_2$ if and only if $\alpha'$ has the Borsuk-Ulam property with respect to $\tau_2$.
	\end{corollary}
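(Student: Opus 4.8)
The plan is to use Lemma~\ref{lem:algebra_tau_2} as the sole engine, reducing the equivalence of the Borsuk-Ulam property for $\alpha$ and $\alpha'$ to a correspondence between solutions of the associated systems. Observe that conditions~(\ref{eq:algebra_i}) and~(\ref{eq:algebra_ii}) in Lemma~\ref{lem:algebra_tau_2} depend only on $r_1, s_1$ and $r_2$ through $a$, whereas the dependence on $s_2$ enters \emph{exclusively} through condition~(\ref{eq:algebra_iii}), namely $(p_1)_\#(b l_\sigma(b)) = \alpha_\#(0,1) = (r_2, s_2)$. So the whole problem concentrates on understanding how the second coordinate of $(p_1)_\#(b l_\sigma(b))$ can be adjusted by modifying $b$, while preserving equation~(\ref{eq:algebra_i}).

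First I would compute $(p_1)_\#(b l_\sigma(b))$ explicitly in terms of the coordinates of $b$. Writing $b = (w; m, n)$ with $w \in F(u,v)$, Proposition~\ref{prop:p2_k2} gives $(p_1)_\#(b) = (m,n)$ and, since $(p_1)_\#$ is a homomorphism, $(p_1)_\#(b l_\sigma(b)) = (p_1)_\#(b) \cdot (p_1)_\#(l_\sigma(b))$ in $\zsdz$. Using that $l_\sigma$ is an automorphism of $P_2(\klein)$ together with the formulas for $l_\sigma$ and for $(p_1)_\#$ supplied by Proposition~\ref{prop:p2_k2}, I expect the second coordinate of this product to come out as $2n$ (or some fixed linear expression in $n$ with coefficient $2$), reflecting the fact that $\sigma$ does not alter the $\z \rtimes$-component in a way that affects the parity class modulo~$4$. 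This is the computational heart: the key output I need is that the second coordinate of $(p_1)_\#(b l_\sigma(b))$ is \emph{always even} and that its value modulo~$4$ is determined by $n \bmod 2$, independently of $w$ and $m$.

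Granting this, the strategy is as follows. Suppose $\alpha$ does not have the Borsuk-Ulam property, so there exist $a, b$ satisfying~(\ref{eq:algebra_i})--(\ref{eq:algebra_iii}) with $\alpha_\#(0,1) = (r_2, s_2)$. I want to produce $a', b'$ witnessing the same for $\alpha'$, where $\alpha'_\#(0,1) = (r_2, s_2')$ with $s_2 \equiv s_2' \bmod 4$. The idea is to keep $a' = a$ (so~(\ref{eq:algebra_ii}) is unchanged) and to modify $b$ to $b'$ in a controlled way. Since $s_2 \equiv s_2' \bmod 4$, we have $s_2' - s_2 = 4t$ for some $t \in \z$. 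I would look for a modification $b' = b \cdot z$, where $z$ lies in a subgroup on which left-multiplication preserves equation~(\ref{eq:algebra_i}) and shifts the second coordinate of $(p_1)_\#(b' l_\sigma(b'))$ by exactly $4t$. A natural candidate is a central or suitably invariant element: because the second coordinate of $(p_1)_\#(b l_\sigma(b))$ depends on $n \bmod 2$ and contributes in multiples of~$2$, adjusting $n$ by an even integer $2t$ shifts $s_2$ by $4t$ while keeping its residue class, and one must check this can be realised by an element $z$ that is compatible with~(\ref{eq:algebra_i}).

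The main obstacle will be verifying that such a modification $b \mapsto b'$ can be carried out \emph{without disturbing equation~(\ref{eq:algebra_i})}, $a b' l_\sigma(a) = b'$. This is the delicate step, since $b$ and $l_\sigma(a)$ are intertwined in~(\ref{eq:algebra_i}), and an arbitrary change to $b$ will generally break it. I expect the resolution to exploit the semi-direct product structure of $P_2(\klein) = F(u,v) \rtimes_\theta (\zsdz)$ from Proposition~\ref{prop:p2_k2}: rewriting~(\ref{eq:algebra_i}) as $l_\sigma(a) = b^{-1} a^{-1} b$ shows that $b$ enters only up to the conjugation action, so one should identify precisely which shifts of $b$ respect this conjugacy relation. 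Concretely, I anticipate that multiplying $b$ on an appropriate side by a power of an element whose $F(u,v)$-part is trivial and whose $\zsdz$-part is $(0, 2t)$ — an element lying in the subgroup where $\theta$ acts trivially enough — will commute with the relevant data and produce the required shift. By symmetry, the reverse implication follows by the same argument exchanging the roles of $s_2$ and $s_2'$, giving the desired equivalence.
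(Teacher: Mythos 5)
Your overall strategy coincides with the paper's: keep $a$ unchanged and replace $b$ by $b'=b(\id;0,2k)$, where $s_2'-s_2=4k$, then re-verify conditions~(\ref{eq:algebra_i})--(\ref{eq:algebra_iii}) of Lemma~\ref{lem:algebra_tau_2}. The paper makes this work by invoking the fact, cited from~\cite[Corollary~4.2]{GonGuaLaa2}, that the centre of $B_2(\klein)$ is generated by $(\id;0,2)$: centrality gives $ab'l_\sigma(a)=abl_\sigma(a)(\id;0,2k)=b'$, and since $l_\sigma(\id;0,2k)=(B^{\delta_{2k}};0,2k)=(\id;0,2k)$ by Proposition~\ref{prop:p2_k2}, one gets $b'l_\sigma(b')=bl_\sigma(b)(\id;0,4k)$, whence $(p_1)_\#(b'l_\sigma(b'))=(r_2,s_2)(0,4k)=(r_2,s_2')$. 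The ``suitably invariant element whose $F(u,v)$-part is trivial and whose $\zsdz$-part is $(0,2t)$'' that you anticipate is precisely this central element, so the delicate step you flag (preserving condition~(\ref{eq:algebra_i})) is resolved exactly as you expect; what your sketch is missing is only the centrality statement itself, which the paper cites rather than reproves.

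However, the claim you call the ``computational heart'' is false, and you should discard it rather than attempt to prove it. Writing $b=(u^{b_1}v^{b_2}y;m,n)$ in the normal form~(\ref{eq:normformab}), equation~(\ref{eq:blsiga}) with $a=b$ shows that the second coordinate of $(p_1)_\#(bl_\sigma(b))$ is $b_2+2n$: it is \emph{not} always even, and modulo~$4$ it depends on the $v$-exponent $b_2$ of the free-group part of $b$, not only on $n \bmod 2$. Indeed it cannot be always even: the corollary is applied in the proof of Theorem~\ref{th:BORSUK_TAU_2} to classes of Types~2 and~3, for which $\alpha_\#(0,1)$ has odd second coordinate, and Proposition~\ref{prop:tau_2_case_14I} exhibits pairs $(a,b)$ satisfying~(\ref{eq:algebra_i})--(\ref{eq:algebra_iii}) whose associated $(p_1)_\#(bl_\sigma(b))$ has odd second coordinate; if your claim held, every class with odd second coordinate would vacuously possess the Borsuk-Ulam property, contradicting that proposition. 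Fortunately your construction never uses this claim: multiplying $b$ on the right by $(\id;0,2t)$ leaves $b_1$, $b_2$ and $m$ unchanged and replaces $n$ by $n+2t$, so the second coordinate $b_2+2n$ is shifted by exactly $4t$ whatever its parity, which is all that is needed.
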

	
\begin{proof}
Since the statement is symmetric with respect to $\alpha$ and $\alpha'$, it suffices to show that if $\alpha$ does not have the Borsuk-Ulam property then neither does $\alpha'$. If $\alpha$ does not have the Borsuk-Ulam property, there exist $a,b \in P_2(\klein)$ satisfying~(\ref{eq:algebra_i})--(\ref{eq:algebra_iii}) of Lemma~\ref{lem:algebra_tau_2}. By hypothesis, there exists $k \in \z$ such that $s'_2=s_2+4 k$. Let $b'=b (\id ; 0 , 2 k)$. As in the proof of~\cite[Corollary~4.2]{GonGuaLaa2}, the centre of $B_2(\klein)$ is generated by $(\id ; 0 , 2)$, so $ab' l_\sigma(a)=a b l_\sigma(a) (\id ; 0 , 2 k)=b(\id ; 0 , 2 k)=b'$ by~(\ref{eq:algebra_i}), $(p_1)_\# (a)= \alpha_\# (1,0)=\alpha'_\# (1,0)$ by~(\ref{eq:algebra_ii}), and:
%
%
%us define the following element of $P_2(\klein)$:
%%
%$$b'=b (\id ; 0 , 2 k) .$$
%%
%From Proposition~\ref{prop:p2_k2} follow easily that the elements $(\id ; 0 , 2 k)$ belongs to the center of $P_2(\klein)$ and they commute with the element $\sigma \in B_2 (\klein)$. We have:
%
\begin{align*}
%a b' l_\sigma(a) &=  a (\id ; 0 , 2 k) b l_\sigma(a) = a b l_\sigma(a) (\id ; 0 , 2k) \stackrel{(i)}{=}  b (\id ; 0 , 2k)= b'\\
%(p_1)_\# (a) &= \alpha_\# (1,0)=\alpha'_\# (1,0)\\
(p_1)_\# (b' l_\sigma(b')) &=  (p_1)_\# (b(\id ; 0 , 2 k) l_\sigma(b (\id ; 0 , 2 k)))
= (p_1)_\# (b l_\sigma(b)) (p_1)_\# (\id; 0 , 4k) \\
& \stackrel{\text{(\ref{eq:algebra_iii})}}{=}  (r_2,s_2)(0, 4 k)=(r_2 , s_2')=\alpha'_\# (0,1).
	\end{align*}
Lemma~\ref{lem:algebra_tau_2} implies that $\alpha'$ does not have the Borsuk-Ulam property with respect to $\tau_2$, from which the result follows.
%We conclude that the elements $a$ and $b'$ and the homomorphism $\alpha'_\#$ satisfies items (\ref{eq:algebra_i}), (\ref{eq:algebra_ii}) and (\ref{eq:algebra_iii}) of Lemma~\ref{lem:algebra_tau_2}, from which follows that $\alpha'$ does not have the Borsuk-Ulam property with respect to $\tau_2$.
	\end{proof}

%\textcolor{yellow}{The following lemma} \comj{Do you mean `proposition'?} \textcolor{yellow}{will be used in the proofs of  
%Propositions~\ref{prop:tau_2_case_2}, \ref{prop:tau_2_case_3}, \ref{prop:tau_2_case_5} and~\ref{prop:tau_2_case_7}.} {\bf !!! V !!! A frase anterior 
%est\'a no lugar errado. Ela deve ser transferida para antes do enunciado do Lemma \ref{lem:lemma_1}} 
In addition to Proposition~\ref{prop:p2_k2}, we use some facts and notation about some automorphisms and elements of $\gsigma$ that are summarised in the following proposition. If $l\in \z$, let $\sigma_{l}$  denote its sign, \emph{i.e}\ $\sigma_{l}=1$ if $l>0$, $\sigma_{l}=-1$ if $l<0$, and $\sigma_{l}=0$ if $l=0$, and if $x$ and $y$ are elements of a group then let $[x,y]=xyx^{-1}y^{-1}$ denote their commutator.

\begin{proposition}\cite[equations~(3.14)--(3.16) and Proposition~3.7]{GonGuaLaa2}\label{prop:gsigma} 
The group $\gsigmab$ is free Abelian, and $\{ B_{k,l} := v^k u^l B u^{-l} v^{-k}\; | \; k,l\in \z \}$ is a basis, namely:
\begin{equation*}
\gsigmab= \bigoplus_{k,l \in \z} \z \left[ B_{k,l} \right].
\end{equation*}
Let $p,q \in \z$ and consider the following automorphism of $\gsigma$:
\begin{equation}\label{eq:defcpq}
\begin{array}{rrcl}
c_{p,q}: & \gsigma & \longrightarrow & \gsigma\\
& x & \longmapsto & v^p u^q x u^{-q} v^{-p}.
\end{array}
\end{equation}
For all $(m,n) \in \zsdz$ and $p,q \in \z$, the endomorphisms $\theta(m,n), \rho$ and $(c_{p,q})$ of $\gsigma$ induce endomorphisms $\theta(m,n)_\ab, \rho_\ab$ and $(c_{p,q})_\ab$ of $\gsigmab$ respectively, and they satisfy:
\begin{align}
\theta(m,n)_\ab(B_{k,l}) &= { \varepsilon_n}B_{k ,\varepsilon_n l-2 \delta_k m} \label{eq:homo_gsigmab_theta} \\
\rho_\ab(B_{k.l}) &={\varepsilon_k} B_{-k,\varepsilon_{(k+1)} l}, \text{ and} \label{eq:homo_gsigmab_rho} \\
(c_{p,q})_\ab(B_{k,l}) &= B_{k+p, l+\varepsilon_{k} q}. \label{eq:homo_gsigmab_cpq}
\end{align}
If $k,l \in \z$ and $r \in \{ 0 , 1 \}$, consider the following elements of $F(u,v)$:
\begin{equation}\label{eq:defTIOJ}
\text{$T_{k,r}= u^{k} (B^{\varepsilon_{r}} u^{-\varepsilon_{r}})^{k\varepsilon_{r}}$, $I_{k}=v^{k} (v B )^{-k}$, $O_{k,l}=\left[ v^{2k},u^l \right]$ and $J_{k,l}=v^{2k}(v u^l  )^{-2k}$.}
\end{equation}
%
%\begin{enumerate}[(I)]
%\item $T_{k,r}= u^{k} (B^{\varepsilon_{r}} u^{-\varepsilon_{r}})^{k\varepsilon_{r}}$.
%\item $I_{k}=v^{k} (v B )^{-k}$.
%\item $O_{k,l}=\left[ v^{2k},u^l \right]$ (the commutator of $v^{2k}$ and $u^l$).
%\item $J_{k,l}=v^{2k}(v u^l  )^{-2k}$.
%\end{enumerate}
%
Then $T_{k,r}, I_{k}, O_{k,l}$ and $J_{k,l}\in \gsigma$. Let $\widetilde{T}_{k,r}$, $\widetilde{I}_k$, $\widetilde{O}_{k,l}$ and $\widetilde{J}_{k,l}$ be the projections of $T_{k,r}, I_{k}, O_{k,l}$ and $J_{k,l}$ in $\gsigmab$. 
\begin{enumerate}[(a)]
\item\label{it:wordsb}  If $k=0$ then $\widetilde{T}_{0,r}=\widetilde{I}_0=0$, and if $k=0$ or $l=0$ then $\widetilde{O}_{k,l}=\widetilde{J}_{k,l}=0$. 

\item\label{it:wordsc} For all $k,l \neq 0$:
\begin{align*}
\hspace*{-6mm}\widetilde{I}_k&=- \sigma_{k} \sum_{i=1}^{\sigma_k k} B_{\sigma_k  i+(1-\sigma_k )/2 ,0}  & \widetilde{O}_{k,l}&=\sigma_k \sigma_l \sum_{i=1}^{\sigma_k k} \sum_{j=1}^{\sigma_l l} \bigl(B_{\sigma_k (2i-1 ),-\sigma_l j+(\sigma_l -1 )/2} 
- B_{ \sigma_k (2i-1) -1 , \sigma_l j-(1+\sigma_l)/2}
\bigr)\\
\hspace*{-6mm}\widetilde{T}_{k,r}&= \sigma_{k} \sum_{i=1}^{\sigma_{k}k} B_{0,\sigma_{k}(i+(\sigma_{k}(1-2r)-1)/2)} & \widetilde{J}_{k,l}&=-\sigma_{k}\sigma_{l} \sum_{i=1}^{\sigma_{k}k} \sum_{j=1}^{\sigma_{l}l} B_{\sigma_{k}(2i-1), \sigma_{l}(j-(1+\sigma_{l})/2)}.
%
%\widetilde{T}_{k,r}&= \sigma_{k} \sum_{i=1}^{\sigma_{k}k} B_{0,\sigma_{k}(i+(\sigma_{k}(1-2r)-1)/2)}\\
%\widetilde{I}_k&=- \sigma_{k} \sum_{i=1}^{\sigma_k k} B_{\sigma_k  i+(1-\sigma_k )/2 ,0}\\
%\widetilde{J}_{k,l}&=-\sigma_{k}\sigma_{l} \sum_{i=1}^{\sigma_{k}k} \sum_{j=1}^{\sigma_{l}l} B_{\sigma_{k}(2i-1), \sigma_{l}(j-(1+\sigma_{l})/2)}\\
%\widetilde{O}_{k,l}&=\sigma_k \sigma_l \sum_{i=1}^{\sigma_k k} \sum_{j=1}^{\sigma_l l} \bigl(
%B_{\sigma_k (2i-1 ),-\sigma_l j+(\sigma_l -1 )/2} 
%- B_{ \sigma_k (2i-1) -1 , \sigma_l j-(1+\sigma_l)/2}
%\bigr).
\end{align*}
\end{enumerate}
\end{proposition}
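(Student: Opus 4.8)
The plan is to establish the two distinct parts of Proposition~\ref{prop:gsigma} in sequence: first the free abelian structure together with the explicit effect of the three induced endomorphisms on the basis, and then the membership and abelianised formulas for the four families of elements $T_{k,r}, I_k, O_{k,l}, J_{k,l}$. For the first part, I would begin from the identification in Proposition~\ref{prop:p2_k2} of $\gsigma$ with $\ker g$, the free group on $\{B_{k,l}\}_{k,l\in\z}$. Abelianising a free group yields a free abelian group on the images of the generators, so $\gsigmab=\bigoplus_{k,l}\z[B_{k,l}]$ is immediate. The real content is the three formulas~\eqref{eq:homo_gsigmab_theta}--\eqref{eq:homo_gsigmab_cpq}. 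For each endomorphism I would compute its effect on a single generator $B_{k,l}=v^k u^l B u^{-l} v^{-k}$ at the level of $F(u,v)$, then project to $\gsigmab$ and rewrite the result as a $\z$-linear combination of the $B_{i,j}$.

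For $c_{p,q}$ this is the cleanest: by definition $c_{p,q}(B_{k,l})=v^p u^q (v^k u^l B u^{-l} v^{-k}) u^{-q} v^{-p}$, and I would use the semidirect-product commutation relation in $\zsdz$ (where $u,v$ play the roles of the two generators, with $u v^{-1}=v u$ type twisting governed by $\varepsilon_k$) to slide $u^q$ past $v^k$, producing a conjugate of the form $v^{k+p}u^{l+\varepsilon_k q}Bu^{-(l+\varepsilon_k q)}v^{-(k+p)}=B_{k+p,\,l+\varepsilon_k q}$ up to elements of the commutator subgroup that vanish in $\gsigmab$. For $\rho$ I would invoke the explicit description of $l_\sigma$ on $u,v,B$ from Proposition~\ref{prop:p2_k2} to compute $\rho(B_{k,l})$, tracking the sign factors $\varepsilon_k$ and $\varepsilon_{k+1}$ that arise from how $l_\sigma$ inverts $B$ and reflects the $v$-index; the index $k\mapsto -k$ reflects the inversion $l_\sigma(v^s;0,0)$ formula. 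For $\theta(m,n)$ I would similarly substitute the action formula and simplify, the factor $\varepsilon_n$ and the shift $\varepsilon_n l-2\delta_k m$ coming respectively from the $\varepsilon_n$ exponent on $u$ and the $B^{\pm m}$ conjugations interacting with $v^k$.

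For the second part, membership $T_{k,r},I_k,O_{k,l},J_{k,l}\in\gsigma=\ker g$ follows by checking that each lies in the kernel of $g$, i.e.\ that $g$ sends it to $(0,0)\in\zsdz$: one applies the homomorphism $g$ (with $g(u)=(1,0)$, $g(v)=(0,1)$) to each defining word in~\eqref{eq:defTIOJ} and verifies the exponents cancel. The abelianised formulas in parts~\eqref{it:wordsb} and~\eqref{it:wordsc} are then obtained by an inductive telescoping argument. The key device is a recursion: for instance $I_{k+1}=v\cdot I_k\cdot(vB)^{-1}\cdot(\text{correction})$, which after abelianising and applying~\eqref{eq:homo_gsigmab_cpq} (that is, conjugation by $v$ shifts the first index) expresses $\widetilde I_{k+1}$ in terms of $\widetilde I_k$ plus a single new basis element. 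Summing the recursion gives the stated sums; the sign conventions $\sigma_k$ and the index offsets $(1-\sigma_k)/2$ package together the two cases $k>0$ and $k<0$, and the base case $k=0$ yields~\eqref{it:wordsb}. The analogous bilinear telescoping in both indices handles $O_{k,l}$ and $J_{k,l}$ as double sums.

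The main obstacle will be the bookkeeping in the double-index telescoping for $O_{k,l}$ and $J_{k,l}$, where one must track how the two shift operations (conjugation by $v^2$ shifting $k$ by one, and the $u^l$ commutator producing a range of $l$-indices) interact under abelianisation, and where the twisting factor $\varepsilon_k$ in~\eqref{eq:homo_gsigmab_cpq} forces the careful case-split encoded by the $\sigma_k(2i-1)$ and $\pm\sigma_l$ index patterns. Verifying that the telescoped expression genuinely collapses to the compact closed form — in particular that the two inner terms of $\widetilde O_{k,l}$ with indices $\sigma_k(2i-1)$ and $\sigma_k(2i-1)-1$ arise correctly from a single commutator step — is the delicate part, and I would organise it by first treating $k,l>0$ completely, then deducing the general signs by the symmetry inherent in the definition of $\sigma$. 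The endomorphism computations of the first part, by contrast, are essentially direct once the commutation relations of $\zsdz$ and the $l_\sigma$ formulas are in hand.
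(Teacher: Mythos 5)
This proposition is not proved in the paper at all: it is imported, with citation, from \cite{GonGuaLaa2} (equations~(3.14)--(3.16) and Proposition~3.7 of that reference), so the only ``proof'' the present paper offers is that reference. Your blind reconstruction is a sound and essentially complete strategy, and it matches the kind of direct computation carried out in the cited paper: identify $\gsigma$ with $\ker{g}$, which is free on $\{B_{k,l}\}_{k,l\in\z}$, so that $\gsigmab$ is free abelian on the same set; verify membership of $T_{k,r}$, $I_k$, $O_{k,l}$, $J_{k,l}$ by evaluating the homomorphism $g$ in the (non-abelian) group $\zsdz$; compute the three endomorphisms on generators and project; and obtain the closed-form sums by telescoping. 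The recursions you propose do work exactly, not just up to correction terms: for instance $I_{k+1}=c_{1,0}(I_k)\,B_{1,0}^{-1}$, $T_{k+1,0}=c_{0,1}(T_{k,0})\,(uBu^{-1})$, $O_{k+1,l}=c_{2,0}(O_{k,l})\,O_{1,l}$ and $J_{k+1,l}=c_{2,0}(J_{k,l})\,J_{1,l}$ hold in $\gsigma$, with base cases such as $O_{1,1}=B_{1,-1}B_{0,0}^{-1}$ computable by hand; abelianising and applying~(\ref{eq:homo_gsigmab_cpq}) then yields the stated sums, with the $\sigma_k$-conventions handling the two signs of $k$.

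One step in your sketch should be made precise, since as phrased it is the only potentially misleading point. You speak of using ``the semidirect-product commutation relation in $\zsdz$'' to slide $u^q$ past $v^k$, but $u$ and $v$ generate a \emph{free} group; the Klein-bottle relation holds only modulo $\ker{g}$. Concretely, $u^qv^k=v^ku^{\varepsilon_k q}w$ with $w=u^{-\varepsilon_k q}v^{-k}u^qv^k\in\ker{g}$, so that $c_{p,q}(B_{k,l})=c_{k+p,\varepsilon_k q}\bigl(w\,B_{0,l}\,w^{-1}\bigr)$, and the discrepancy from $B_{k+p,l+\varepsilon_k q}=c_{k+p,\varepsilon_k q}(B_{0,l})$ is the image under the automorphism $c_{k+p,\varepsilon_k q}$ of the commutator $[w,B_{0,l}]\in[\gsigma,\gsigma]$. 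It vanishes in $\gsigmab$ because conjugation by an element of $\gsigma$ induces the identity on $\gsigmab$ --- not merely because the correction lies in $\gsigma$. The same observation is what disposes of the $B^{\pm}$-conjugation factors in the $\theta(m,n)$ and $\rho$ computations (these factors lie in $\gsigma$, so conjugating by them is invisible after abelianisation). With that point inserted, your plan goes through; the rest is the bookkeeping you already identified.
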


If $k,l \in \z$, let:
\begin{equation}\label{eq:defQ}
Q_{k,l}=u^k v^{2l+1} u^k v^{-2l-1} \in F(u,v).
\end{equation}

\begin{proposition}\label{prop:element_q}
Let $k,l \in \z$. Then $Q_{k,l} \in \gsigma$, and $Q_{0,l}=\id$. If $\widetilde{Q}_{k,l}$ denotes the projection of $Q_{k,l}$ in $\gsigmab$ then for all $k\neq 0$:
\begin{equation}\label{eq:formqkl}
Q_{k,l}=O_{l,k}^{-1} \left(\prod_{i=1}^{\sigma_k k} B_{2l,-i+k (1+\sigma_k)/2}\right)^{\sigma_k}
\end{equation}
 and 
\begin{equation}\label{eq:formqkl2}
 \widetilde{Q}_{k,l} =-\widetilde{O}_{l,k}+\sigma_k \sum_{i=1}^{\sigma_k k} B_{2l,\sigma_k i-(1+\sigma_k)/2}. 
\end{equation}
%and therefore, if $\widetilde{Q}_{k,l}$ denotes the projection of $Q_{k,l}$ in $\gsigmab$, we have:
%%
%$$\displaystyle \widetilde{Q}_{k,l} =-\widetilde{O}_{l,k}+\sigma_k \sum_{i=1}^{\sigma_k k} B_{2l,\sigma_k i-(1+\sigma_k)/2}.$$.
\end{proposition}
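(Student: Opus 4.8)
The plan is to deduce everything from the single elementary relation $vuv^{-1}=u^{-1}B$ in $F(u,v)$, which follows immediately from the definition $B=uvuv^{-1}$ in Proposition~\ref{prop:p2_k2}, together with its inverse form $vu^{-1}v^{-1}=B^{-1}u$. First I would dispose of the membership and the degenerate case. Applying the homomorphism $\map{g}{F(u,v)}[\zsdz]$ and using the multiplication $(m,n)(m',n')=(m+(-1)^{n}m',\,n+n')$ of $\zsdz$, one computes $g(Q_{k,l})=(k,0)(0,2l+1)(k,0)(0,-2l-1)=(0,0)$, so that $Q_{k,l}\in\ker g=\gsigma$ for all $k,l$; and $Q_{0,l}=v^{2l+1}v^{-2l-1}=\id$ is immediate from~\eqref{eq:defQ}. (Alternatively, once~\eqref{eq:formqkl} is established, membership for $k\neq 0$ follows because $O_{l,k}$ and the $B_{2l,\ast}$ lie in $\gsigma$ by Proposition~\ref{prop:gsigma}.)

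For the word identity~\eqref{eq:formqkl} I would treat $k>0$ first, so that $\sigma_k=1$ and $(1+\sigma_k)/2=1$. Since $B_{2l,j}=v^{2l}u^{j}Bu^{-j}v^{-2l}$, the outer factors $v^{\pm 2l}$ telescope and the product reduces to $v^{2l}\bigl(\prod_{j=k-1}^{0}u^{j}Bu^{-j}\bigr)v^{-2l}$, an interior telescoping product of conjugates of $B$. Collapsing the intermediate powers of $u$ gives $u^{k-1}B(u^{-1}B)^{k-1}$, and substituting $u^{-1}B=vuv^{-1}$ together with $B=uvuv^{-1}$ makes the interior $v^{-1}v$ pairs cancel, yielding the closed form $u^{k}vu^{k}v^{-1}$. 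Hence $\prod_{i=1}^{k}B_{2l,k-i}=v^{2l}u^{k}vu^{k}v^{-2l-1}$, and left-multiplying by $O_{l,k}^{-1}=u^{k}v^{2l}u^{-k}v^{-2l}$ cancels the middle $u^{\mp k}$ and $v^{\mp 2l}$, returning exactly $u^{k}v^{2l+1}u^{k}v^{-2l-1}=Q_{k,l}$, which is~\eqref{eq:formqkl}.

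The case $k<0$ (so $\sigma_k=-1$ and $(1+\sigma_k)/2=0$) is entirely parallel: writing $k=-m$, the inverse product $\bigl(\prod_{i=1}^{m}B_{2l,-i}\bigr)^{-1}$ reduces, after extracting $v^{2l}$, to the telescoping product $\prod_{i=m}^{1}u^{-i}B^{-1}u^{i}$, which collapses via $vu^{-1}v^{-1}=B^{-1}u$ to $u^{-m}vu^{-m}v^{-1}$; combining with $O_{l,-m}^{-1}$ then recovers $Q_{-m,l}$ as before. Finally,~\eqref{eq:formqkl2} is obtained by projecting~\eqref{eq:formqkl} into $\gsigmab$: since $\gsigmab$ is free Abelian on the classes $[B_{k,l}]$ by Proposition~\ref{prop:gsigma}, abelianisation sends $O_{l,k}^{-1}$ to $-\widetilde{O}_{l,k}$ and the $\sigma_k$-th power of the product to the corresponding signed sum, and a reindexing of the summation variable (replacing $k-i$ by $\sigma_k i-(1+\sigma_k)/2$) puts it in the stated form.

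The only real obstacle is bookkeeping rather than anything conceptual: one must track the sign $\sigma_k$, the shift $(1+\sigma_k)/2$, and the orientation of the telescoping products so that the $k>0$ and $k<0$ computations assemble into the single uniform formula~\eqref{eq:formqkl}. Once the relation $vuv^{-1}=u^{-1}B$ is in hand each telescoping step is mechanical, so the risk lies entirely in index and sign errors, against which the explicit small cases $k=\pm 1$ provide a reliable check.
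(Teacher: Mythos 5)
Your proof is correct, and it reaches the same word identity by a differently organised computation. The paper proceeds in two stages: it first proves~(\ref{eq:formqkl}) for $l=0$ by induction on $\lvert k\rvert$, via the relation $Q_{k+\sigma_k,0}=c_{0,\sigma_k}(Q_{k,0})\, B_{0,-1+(1+\sigma_k)/2}^{\sigma_k}$, and then deduces the general case from the factorisation $Q_{k,l}=O_{l,k}^{-1}\,c_{2l,0}(Q_{k,0})$; membership $Q_{k,l}\in\gsigma$ is obtained only afterwards, as a consequence of the established formula, whose right-hand side visibly lies in $\gsigma$. You instead verify~(\ref{eq:formqkl}) in closed form for all $l$ at once: the conjugating factors $v^{\pm 2l}$ in the $B_{2l,j}$ telescope, the inner product of conjugates of $B$ collapses via $u^{-1}B=vuv^{-1}$ (resp.\ $B^{-1}u=vu^{-1}v^{-1}$ when $k<0$), and left multiplication by $O_{l,k}^{-1}$ recovers $Q_{k,l}$ --- in effect your telescoping is the paper's induction unrolled, with the $c_{2l,0}$-reduction absorbed into the same computation. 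You also prove membership up front by the cleaner observation that $g(Q_{k,l})=(0,0)$, so that $Q_{k,l}\in\ker{g}=\gsigma$ by Proposition~\ref{prop:p2_k2}, rather than reading it off the formula. What the paper's organisation buys is reuse of the conjugation automorphisms $c_{p,q}$ that recur throughout Section~\ref{sec:equalg}; what yours buys is brevity and self-containment, with no induction to set up. Both arguments conclude identically, by projecting to $\gsigmab$ and reindexing via the equality of the index sets $\{-i+k(1+\sigma_k)/2 \,|\, 1\le i\le \sigma_k k\}$ and $\{\sigma_k i-(1+\sigma_k)/2 \,|\, 1\le i\le \sigma_k k\}$.
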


\begin{proof}
Clearly $Q_{0,l}=\id$ for all $l\in \z$. So assume that $k\neq 0$, and suppose first that $l=0$. If $\lvert k\rvert=1$ then $Q_{k,0}=u^{k} v u^{k} v^{-1}=B_{0,-1+(1+\sigma_{k})/2}^{\sigma_{k}}$, and~(\ref{eq:formqkl}) is valid in this case.
%First we have
%%
%$$Q_{1,0}=u v u v^{-1}=B_{0,0},$$
%%
%and the formula is true for $k=1$. 
Suppose then that~(\ref{eq:formqkl}) holds for some $k\neq 0$. Then by induction we have: 
\begin{align*}
Q_{k+\sigma_{k},0} &= u^{\sigma_{k}} u^{k}vu^{k}v^{-1} u^{-\sigma_{k}} \ldotp u^{\sigma_{k}} v u^{\sigma_{k}} v^{-1}= c_{0,\sigma_{k}} (Q_{k,0})  \ldotp B_{0,-1+(1+\sigma_{k})/2}^{\sigma_{k}}\\
& =\left(\prod_{i=1}^{\sigma_k k} B_{0,-i+\sigma_k+k(1+\sigma_k)/2}\right)^{\sigma_k} B_{0,-1+(1+\sigma_{k})/2}^{\sigma_{k}}\\
&=\begin{cases}
\left(\displaystyle\prod_{i=1}^{k} B_{0,-i+1+k}\right) B_{0,0}= \displaystyle\prod_{i=1}^{k+1} B_{0,-i+1+k} & \text{if $k>0$}\\
\left(\displaystyle\prod_{i=1}^{-k} B_{0,-i-1}\right)^{-1} B_{0,-1}^{-1}=\left(\displaystyle\prod_{i=1}^{-(k-1)} B_{0,-i}\right)^{-1} & \text{if $k<0$.}
\end{cases}
\end{align*}
%Then by induction we have:
%% and let us show using induction that the formula is true for $k+1$. We have
%%
%\begin{align*}
%Q_{k+1,0} &= u^{k+1} v u^{k+1} v^{-1}= u (u^k v u^k  v^{-1}) u^{-1} (u v  u v^{-1}) 
%=  c_{0,1} (Q_{k,0}) B_{0,0}= c_{0,1} \left(\prod_{i=1}^k B_{0,k-i} \right) B_{0,0}\\
%&=\left(\prod_{i=1}^k B_{0,(k+1) -i} \right) B_{0,0}
%=  \prod_{i=1}^{k+1} B_{0,(k+1)-i},
%\end{align*}
%and thus~(\ref{eq:formqkl}) is valid for all $k\geq 0$ and $l=0$. If $k<0$ then:
%%
%\begin{equation*}
%Q_{k,0}=u^k v u^k v^{-1}=u^k (u^{-k} v u^{-k} v^{-1} )^{-1} u^{-k}=c_{0,k} (Q_{-k,0}^{-1})=c_{0,k} \left(\prod_{i=1}^{-k} B_{0,-k-i} \right)^{-1}=\left(\prod_{i=1}^{-k} B_{0,-i} \right)^{-1}
%\end{equation*}
%using the result in the case $k>0$, and so
Thus~(\ref{eq:formqkl}) holds for all $k\neq 0$ and $l=0$. Finally, suppose that $k \neq 0$ and $l\neq 0$. Then from the case $l=0$, we have:
\begin{align*}
Q_{k,l} & =(u^k v^{2l}  u^{-k} v^{-2l}) v^{2l} (u^k  v u^k v^{-1}) v^{-2l}=O_{l,k}^{-1} c_{2l,0} (Q_{k,0})=O_{l,k}^{-1} c_{2l,0} \left(\prod_{i=1}^{\sigma_k k} B_{0,-i+k (1+\sigma_k)/2 }\right)^{\sigma_k}\\
&= O_{l,k}^{-1} \left(\prod_{i=1}^{\sigma_k k} B_{2l,-i+k (1+\sigma_k)/2 }\right)^{\sigma_k}.
\end{align*}
and (\ref{eq:formqkl}) holds in this case. So for all $k,l\in \z$, (\ref{eq:formqkl}) is valid, and $Q_{k,l} \in \gsigma$. Equation~(\ref{eq:formqkl2}) then follows by projecting into $\gsigmab$ and using the fact that the sets $\{ -i+k(1+\sigma_k)/2 \, | \, 1 \leq i \leq \sigma_k k \}$ and $\{ \sigma_k i - (1+\sigma_k)/2 \, | \, 1 \leq i \leq \sigma_k k \}$ are equal.
\end{proof}

%\comj{I added these general calculations. It is useful to have them available for later on, without supposing that Lemma~\ref{lem:algebra_tau_2}(\ref{eq:algebra_i}) holds.}
%{\bf !!!D!!! I full agree}
Let $a,b\in P_{2}(\klein)$. By~\cite[Lemma~4.8]{GonGuaLaa2}, there exist $a_1,a_2,b_1,b_2, m_1,n_1,m_{2}, n_{2} \in \z$ and $x,y \in \gsigma$ such that:
\begin{equation}\label{eq:normformab}
\text{$a=(u^{a_1} v^{a_2} x ; m_1 , n_1)$ and $b=(u^{b_1} v^{b_2} y ; m_{2} , n_{2})$.}
\end{equation}
Exchanging the r\^oles of $a$ and $b$ in~\cite[equations~(4.5),~(4.6) and~(4.8)]{GonGuaLaa2}, and noting that $a_{2}$ is not necessarily zero (as it was in the proof of~\cite[Lemma~4.8]{GonGuaLaa2}), it follows that:\\
\begin{align}
bl_{\sigma}(a) 
%=& (u^{b_1} v^{b_2} y ; m_{2} , n_{2}) ((Bu^{-1})^{a_{1}} B^{-a_{1}} \theta(a_{1},0)((uv)^{-a_{2}}(uB)^{\delta_{a_{2}}}) \theta(a_{1},\delta_{a_{2}})(\rho(x) B^{\delta_{n_{1}}});\\
% & a_{1}+(-1)^{\delta_{a_{2}}}m_{1}, a_{2}+n_{1})\\
=& (u^{b_1} v^{b_2} y \theta(m_{2} , \delta_{n_{2}})((Bu^{-1})^{a_{1}} B^{-a_{1}} \theta(a_{1},0)((uv)^{-a_{2}}(uB)^{\delta_{a_{2}}}) \theta(a_{1},\delta_{a_{2}})(\rho(x) B^{\delta_{n_{1}}}));\notag\\
& m_{2}+ \varepsilon_{n_{2}}(a_{1}+(-1)^{\delta_{a_{2}}}m_{1}), a_{2}+n_{1}+n_{2})\notag\\
%%%
=& (u^{b_1} v^{b_2} y \theta(m_{2} , \delta_{n_{2}})((Bu^{-1})^{a_{1}} B^{-a_{1}})\theta(m_{2}+ \varepsilon_{n_{2}}a_{1} , \delta_{n_{2}})((uv)^{-a_{2}}(uB)^{\delta_{a_{2}}}) \notag\\ 
&\theta(m_{2}+\varepsilon_{n_{2}}a_{1} , \delta_{n_{2}}+\delta_{a_{2}})(\rho(x) B^{\delta_{n_{1}}}); m_{2}+ \varepsilon_{n_{2}}(a_{1}+(-1)^{\delta_{a_{2}}}m_{1}), a_{2}+n_{1}+n_{2}).\label{eq:blsiga}
\end{align}
In a similar manner, exchanging the r\^oles of $a$ and $b$ in~\cite[equation~(4.4)]{GonGuaLaa2}, replacing $b$ by $a$ in~\cite[equation~(4.5)]{GonGuaLaa2}, and then substituting $a$ (resp.\ $b$) by $ab$ (resp.\ $a$) in~\cite[equation~(4.6)]{GonGuaLaa2}, we see that:
\begin{align}\label{eq:a_b_lsigma_a1}
abl_\sigma(a) 
%= & (u^{a_1}v^{a_2} x \theta(m_1,\delta_{n_1})(u^{b_1}v^{b_2}y);m_1+\varepsilon_{n_1}m_2,n_1+n_2)((Bu^{-1})^{a_{1}} B^{-a_{1}};a_1,0) \nonumber \\
% & ((uv)^{-a_2}(uB)^{\delta_{a_2}};0,a_2)(\rho(x) B^{\delta_{n_{1}}};0,0) \nonumber \\
= & (u^{a_1}v^{a_2} x \theta(m_1,\delta_{n_1})(u^{b_1}v^{b_2}y) \theta(m_1+\varepsilon_{n_1}m_2,\delta_{n_1+n_2})((Bu^{-1})^{a_{1}} B^{-a_{1}} \theta(a_1,0)((uv)^{-a_2}(uB)^{\delta_{a_2}} \nonumber \\
& \theta(0,\delta_{a_2})(\rho(x)B^{\delta_{n_{1}}})));  m_1+\varepsilon_{n_1}m_2 + \varepsilon_{n_1+n_2} (a_1 + \varepsilon_{a_2} m_1), 2n_1+n_2+a_2).
\end{align}

The following result is similar to~\cite[Lemma~4.8]{GonGuaLaa2}, and will be used in the proof of Lemma~\ref{lem:lemma_1}.
%at several points \comj{say where exactly.}{\bf !!!D!!! I do not see that Lemma 2.6 is used
%anywhere, except when we say in the proof of Lemma 2.7when we say "With the notation of Lemma 2.6" After yoru comments I will make a 
%proposal about Lemma 2.6} in what follows.

\begin{lemma}\label{lem:lem48ggl2}
Let $a,b \in P_2 (\klein)$, which we write in the form~(\ref{eq:normformab}). 
%Then there exist $a_1,a_2,b_1,b_2, m_1,n_1,m_{2}, n_{2} \in \z$ and $x,y \in \gsigma$ such that $a=(u^{a_1} v^{a_2} x ; m_1 , n_1)$ and $b=(u^{b_1} v^{b_2} y ; m_{2} , n_{2})$. 
Suppose that $a$ and $b$ satisfy the relation of Lemma~\ref{lem:algebra_tau_2}(\ref{eq:algebra_i}). 
Then:
\begin{equation}\label{eq:a1a2}
\text{$a_{1}=\varepsilon_{n_{2}}m_{2}(\varepsilon_{n_{1}}-1)-m_{1}(1+\varepsilon_{n_{1}+n_{2}})$ and $a_{2}=-2n_{1}$,}
\end{equation}
%{\bf !!! V !!! De uma forma mais simples (acho), podemos escrever $a_1= 2( \delta_{n_1} \varepsilon_{n_2 + 1}m_2 - \delta_{n_1+n_2+1}m_1)$}
so $a_{1}$ and $a_{2}$ are even, and:
\begin{align}
y =& v^{-b_{2}} u^{a_{1}-b_{1}} v^{a_{2}} x \theta(m_{1},\delta_{n_{1}})(u^{b_{1}}v^{b_{2}}y)\ldotp\notag\\
&\theta(m_{1}+(-1)^{\delta_{n_{1}}}m_{2},n_{1}+n_{2})((Bu^{-1})^{a_{1}} B^{-a_{1}} \theta(a_{1},0)((Bv^{2})^{-a_{2}/2}) \theta(a_{1},0)(\rho(x) B^{\delta_{n_{1}}})).\label{eq:yexp}
\end{align}
\end{lemma}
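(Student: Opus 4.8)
The plan is to start from the defining equation of Lemma~\ref{lem:algebra_tau_2}(\ref{eq:algebra_i}), namely $abl_\sigma(a)=b$, and to compare it coordinate-by-coordinate using the semi-direct product decomposition $P_2(\klein)=F(u,v)\rtimes_\theta(\zsdz)$ of Proposition~\ref{prop:p2_k2}. I would use the explicit formula~(\ref{eq:a_b_lsigma_a1}) for $abl_\sigma(a)$, which is already written in normal form $(w; R, S)$ with respect to this decomposition. Writing $b=(u^{b_1}v^{b_2}y; m_2, n_2)$ as in~(\ref{eq:normformab}), the equation $abl_\sigma(a)=b$ forces equality in both the $\zsdz$-component and the $F(u,v)$-component. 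The $\zsdz$-component of $abl_\sigma(a)$ is $(m_1+\varepsilon_{n_1}m_2+\varepsilon_{n_1+n_2}(a_1+\varepsilon_{a_2}m_1),\, 2n_1+n_2+a_2)$, and this must equal $(m_2, n_2)$.

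The first step is therefore to read off the two scalar equations from the $\zsdz$-component. The second coordinate gives $2n_1+n_2+a_2=n_2$, hence immediately $a_2=-2n_1$, which is the second assertion of~(\ref{eq:a1a2}) and shows $a_2$ is even. The first coordinate gives $m_1+\varepsilon_{n_1}m_2+\varepsilon_{n_1+n_2}(a_1+\varepsilon_{a_2}m_1)=m_2$; since $a_2=-2n_1$ is even we have $\varepsilon_{a_2}=1$, so this rearranges to $\varepsilon_{n_1+n_2}a_1=m_2-m_1-\varepsilon_{n_1}m_2-\varepsilon_{n_1+n_2}m_1$. Multiplying through by $\varepsilon_{n_1+n_2}=\pm1$ and simplifying using $\varepsilon_{n_1+n_2}\varepsilon_{n_1}=\varepsilon_{n_2}$ should yield exactly $a_1=\varepsilon_{n_2}m_2(\varepsilon_{n_1}-1)-m_1(1+\varepsilon_{n_1+n_2})$, the first assertion of~(\ref{eq:a1a2}). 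Checking that $a_1$ is even is then routine: each of $\varepsilon_{n_1}-1$ and $1+\varepsilon_{n_1+n_2}$ takes values in $\{0,-2\}$ and $\{0,2\}$ respectively, so both summands are even.

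The third step is to derive the formula~(\ref{eq:yexp}) for $y$ from the $F(u,v)$-component. Here the $F(u,v)$-part $w$ of $abl_\sigma(a)$ in~(\ref{eq:a_b_lsigma_a1}) must equal $u^{b_1}v^{b_2}y$. Solving for $y$ gives $y=v^{-b_2}u^{-b_1}w$, and substituting the expression for $w$ and using $a_2=-2n_1$ (so that $\theta(a_1,0)((uv)^{-a_2}(uB)^{\delta_{a_2}})=\theta(a_1,0)((Bv^2)^{-a_2/2})$, since $\delta_{a_2}=0$ and the identity $(uv)^2=(vu)\cdots$ must be massaged into the $Bv^2$ form) should produce~(\ref{eq:yexp}). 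The passage from $(uv)^{-a_2}$ to $(Bv^2)^{-a_2/2}$ is where I expect the only genuine subtlety: one must verify the word identity $(uv)^{-2}=Bv^{-2}$ or the equivalent rearrangement inside $F(u,v)$, recalling $B=uvuv^{-1}$, and confirm that the $\theta(m_1+(-1)^{\delta_{n_1}}m_2, n_1+n_2)$ factor absorbs the remaining action correctly once the three consecutive $\theta$-applications in~(\ref{eq:a_b_lsigma_a1}) are composed via the cocycle rule $\theta(p,q)\theta(p',q')=\theta(\cdots)$.

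The main obstacle will be the careful bookkeeping of the nested $\theta$-actions and the simplification of the $F(u,v)$-word in the third step, rather than any conceptual difficulty; the coordinate equations in the first two steps are purely arithmetic. In particular, I would double-check that $u^{-b_1}$ commutes past the leading $u^{a_1}v^{a_2}x\,\theta(m_1,\delta_{n_1})(u^{b_1}v^{b_2}y)$ block only after $v^{-b_2}$ and $u^{-b_1}$ are correctly positioned, so that the stated form $y=v^{-b_2}u^{a_1-b_1}v^{a_2}x\,\theta(m_1,\delta_{n_1})(u^{b_1}v^{b_2}y)\cdots$ emerges with the exponent $a_1-b_1$ on $u$ exactly as claimed. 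Since~(\ref{eq:a_b_lsigma_a1}) is quoted as already established, this lemma is essentially an exercise in extracting and rearranging its two components, mirroring the structure of~\cite[Lemma~4.8]{GonGuaLaa2}.
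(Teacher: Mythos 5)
Your proposal is correct and takes essentially the same route as the paper: compare the $\zsdz$-coordinates of $abl_\sigma(a)=b$ via~(\ref{eq:a_b_lsigma_a1}) to obtain~(\ref{eq:a1a2}) (second coordinate gives $a_2=-2n_1$, first coordinate with $\varepsilon_{a_2}=1$ gives $a_1$), then solve the $F(u,v)$-component for $y$, using $\delta_{a_2}=0$ and the word identity to reach~(\ref{eq:yexp}). The only correction: the identity you need is $(uv)^{2}=Bv^{2}$ (immediate from $B=uvuv^{-1}$), not $(uv)^{-2}=Bv^{-2}$ as you tentatively wrote, so that $(uv)^{-a_2}=(Bv^{2})^{-a_2/2}$.
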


\begin{proof}
%The existence of $a_1,a_2,b_1,b_2, m_1,n_1,m_{2}, n_{2} \in \z$ and  $x,y \in \gsigma$ was proved in~\cite[Lemma~4.8]{GonGuaLaa2}. Exchanging the rôles of $a$ and $b$ in~\cite[equation~(4.4)]{GonGuaLaa2}, replacing $b$ by $a$ in~\cite[equation~(4.5)]{GonGuaLaa2}, and then substituting $a$ (resp.\ $b$) by $ab$ (resp.\ $a$) in~\cite[equation~(4.6)]{GonGuaLaa2}, it follows that:
%\begin{equation}\label{eq:p1sharp}
%(p_{1})_{\#}(ab l_{\sigma}(a))= (m_{1}+(-1)^{\delta_{n_{1}}}m_{2}+(-1)^{\delta_{n_{1}+n_{2}}}(a_{1}+(-1)^{\delta_{a_{2}}}m_{1}), 2n_{1}+a_{2}+n_{2}).
%\end{equation}
Let $a,b \in P_2 (\klein)$, which we write in the form~(\ref{eq:normformab}).
By Lemma~\ref{lem:algebra_tau_2}(\ref{eq:algebra_i}), we have $(p_{1})_{\#}(ab l_{\sigma}(a))=(p_{1})_{\#}(b)$. By~(\ref{eq:normformab}) and~(\ref{eq:a_b_lsigma_a1}), we thus obtain the second relation of~(\ref{eq:a1a2}), and $m_{2}=m_{1}+ \varepsilon_{n_{1}}m_{2} +\varepsilon_{n_{1}+n_{2}} (a_{1}+ m_{1})$,
where we have used the fact that $a_{2}$ is even, and that $(-1)^{\delta_{q}}=\varepsilon_{q}$ for all $q\in \z$. The first relation of~(\ref{eq:a1a2}) then follows, and we deduce also that $a_{1}$ is even. Equation~(\ref{eq:yexp}) is then also a consequence of~(\ref{eq:normformab}) and~(\ref{eq:a_b_lsigma_a1}), using also the equality $(uv)^{2}=Bv^{2}$ and the fact that $a_{2}$ is even.
%Exchanging the rôles of $a$ and $b$ in~\cite[equation~(4.4)]{GonGuaLaa2}, replacing $a$ (resp.\ $b$) by $ab$ (resp.\ $a$) in~\cite[equation~(4.8)]{GonGuaLaa2}, and noting that $a_{2}$ is not necessarily zero (as it was in the proof of~\cite[Lemma~4.8]{GonGuaLaa2}), we obtain:
%\begin{align*}
%p_{F}(ab l_{\sigma}(a))=& u^{a_1} v^{a_2}x \theta(m_{1},\delta_{n_{1}})(u^{b_{1}}v^{b_{2}}y)\ldotp\\
%& \theta(m_{1}+(-1)^{\delta_{n_{1}}}m_{2},n_{1}+n_{2})((Bu^{-1})^{a_{1}} B^{-a_{1}} \theta(a_{1},0)((uv)^{-a_{2}} (uB)^{\delta_{a_{2}}}) \theta(a_{1},\delta_{a_{2}})(\rho(x) %B^{\delta_{n_{1}}})).
%\end{align*} 
%The result follows from Lemma~\ref{lem:algebra_tau_2}(\ref{eq:algebra_i}), the equality $(uv)^{2}=Bv^{2}$ and the fact that $a_{2}$ is even.
\end{proof}

The following lemma  will be used in the proofs of  Propositions~\ref{prop:tau_2_case_23I},~\ref{prop:tau_2_case_5I} and~\ref{prop:tau_2_case_7I}. 
	
%{\bf !!! V !!! New version}
\begin{lemma}\label{lem:lemma_1}
Let $\alpha \in [\torus, \ast;\klein, \ast]$ and suppose that ${\allowdisplaybreaks
	\alpha_\#: \begin{cases}
	 (1,0) \longmapsto (\delta_{i+1} \delta_{j+1} r_1 ,  2s_1+i) \\
	 (0,1) \longmapsto (\delta_{i+1} \delta_{j+1} r_2 , 2s_2+j),
	\end{cases}
}$ where $r_1,r_2,s_1,s_2 \in \z$ and $i,j \in \{0,1\}$. If $\alpha$ does not have the Borsuk-Ulam property with respect to $\tau_2$, then there exist $m,n \in \z$ and $x,y \in \gsigmab$ for which the following equality holds in $\gsigmab$:
\begin{multline}\label{eq:lema_1}
(c_{a_2-b_2,a_1-b_1})_\ab (x)+\theta(g,\delta_{n+i})_\ab (\rho_\ab (x))+(c_{a_2,a_1\varepsilon_{n+i}})_\ab (\theta(\delta_{i+1}\delta_{j+1}r_1,\delta_i)_\ab(y))-y\\
+(c_{a_2,0})_\ab(\widetilde{T}_{a_1\varepsilon_{n+i},\delta_{n+i}} )
+(c_{a_2-b_2,0})_\ab (\widetilde{O}_{2s_1+i,a_1-b_1}-\delta_{j+1} \widetilde{O}_{s_{2}-n,2\delta_i m-2\delta_{i+1}\delta_{n+1}r_1}+\delta_j \widetilde{Q}_{-2\delta_i m,s_{2}-n}) \\
+(c_{a_2,a_1\varepsilon_{n+i}})_\ab (\widetilde{J}_{\delta_{i+1}(n-s_2),-2\delta_{i+1}\delta_{j+1}r_1}) 
+(c_{a_2-1,a_1 \varepsilon_{n+i+1}})_\ab(\widetilde{I}_{-\delta_i b_2}) 
+(c_{0,\delta_{n+i+1}})_\ab (\widetilde{J}_{-2s_1 -i,1-2g})  \\
+\widetilde{O}_{-2s_1-i,\delta_{n+i-1}} +(\delta_{n+i}+\delta_i \varepsilon_{n+i}-g)B_{0,0}+(\delta_i-\delta_{n+i}+\varepsilon_i m)B_{a_2,a_1 \varepsilon_{n+i}}+(\delta_{i+1}\delta_{j+1}r_1-\delta_i)B_{a_2-b_2,a_1-b_1}
= 0,
\end{multline}
where $a_1=-2(\delta_{i+1}\delta_{j+1}\delta_{n+1} r_1+\delta_i \varepsilon_{n} m)$, $a_2=-4s_1-2i$, 
$b_1 =\delta_{i+1}\delta_{j+1}\varepsilon_{n} r_2+2 \delta_{j+n+1}\varepsilon_{j+1} m$, $b_2=2s_2-2n+j$ %$c=\delta_{i+1}\delta_{j+1}r_1$ 
and  $g=\delta_{i+1}\delta_{j+1}r_1+\varepsilon_i m+\varepsilon_{n+i} a_1$. 
\end{lemma}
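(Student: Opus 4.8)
The plan is to start from the assumption that $\alpha$ does not have the Borsuk-Ulam property, which by Lemma~\ref{lem:algebra_tau_2} gives elements $a,b\in P_2(\klein)$ satisfying conditions~(\ref{eq:algebra_i})--(\ref{eq:algebra_iii}). I would write $a$ and $b$ in the normal form~(\ref{eq:normformab}), say $a=(u^{a_1}v^{a_2}x;m_1,n_1)$ and $b=(u^{b_1}v^{b_2}y;m_2,n_2)$, and then extract the numerical data forced by conditions~(\ref{eq:algebra_ii}) and~(\ref{eq:algebra_iii}) together with Lemma~\ref{lem:lem48ggl2}. Condition~(\ref{eq:algebra_ii}) says $(p_1)_\#(a)=(m_1,n_1)=\alpha_\#(1,0)=(\delta_{i+1}\delta_{j+1}r_1,2s_1+i)$, so I can read off $m_1$ and $n_1$; condition~(\ref{eq:algebra_iii}) together with the projection formula for $(p_1)_\#$ will pin down the relevant combination of $m_2,n_2$. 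Feeding these into~(\ref{eq:a1a2}) of Lemma~\ref{lem:lem48ggl2} yields the explicit values claimed for $a_1$ and $a_2$ (after a reparametrisation, setting $m=m_2$ or a suitable shift, and $n=n_2$ or $n_1$ so as to match the stated expressions for $a_1,a_2,b_1,b_2,g$). The bookkeeping here is purely about tracking how $i,j$ encode the parities via $\delta_i,\delta_{i+1},\varepsilon_n$, etc.

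Next I would take the word equation~(\ref{eq:yexp}) from Lemma~\ref{lem:lem48ggl2}, which expresses $y\in\gsigma$ as an explicit product of conjugates of $x$, of $y$ itself, and of the auxiliary words $B^{\pm 1}$, $(Bu^{-1})$, $(uv)$, $(uB)$ appearing in the formula. The strategy is to rewrite every factor on the right-hand side in terms of the distinguished elements $T_{k,r}, I_k, O_{k,l}, J_{k,l}$ of~(\ref{eq:defTIOJ}) and the new element $Q_{k,l}$ of~(\ref{eq:defQ}), using the identities $(uv)^2=Bv^2$ (already invoked in Lemma~\ref{lem:lem48ggl2}) and the definitions of these words. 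In particular the powers $(Bu^{-1})^{a_1}B^{-a_1}$ become (up to the $\theta$-action) expressions in $T$, the factors $(uv)^{-a_2}(uB)^{\delta_{a_2}}$ and $v$-powers become expressions in $I, O, J$, and the cross-terms $u^k v^{2l+1}u^k v^{-2l-1}$ that arise from commuting $u$-powers past $v$-powers become the $Q_{k,l}$; this is precisely why Proposition~\ref{prop:element_q} was proved just beforehand. Since $a_1$ and $a_2$ are even by Lemma~\ref{lem:lem48ggl2}, the arguments of the various words simplify and the $\delta$'s collapse appropriately.

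I would then abelianise, i.e.\ project the resulting identity in $\gsigma$ down to $\gsigmab$ via $w\mapsto\widetilde w$. Under abelianisation the conjugation automorphisms $c_{p,q}$, $\theta(m,n)$ and $\rho$ become the linear maps $(c_{p,q})_\ab$, $\theta(m,n)_\ab$ and $\rho_\ab$ given explicitly in~(\ref{eq:homo_gsigmab_cpq}),~(\ref{eq:homo_gsigmab_theta}) and~(\ref{eq:homo_gsigmab_rho}), and each word $T_{k,r}, I_k, O_{k,l}, J_{k,l}, Q_{k,l}$ is replaced by its abelianisation $\widetilde T_{k,r}$, etc., from Proposition~\ref{prop:gsigma}(\ref{it:wordsb})--(\ref{it:wordsc}) and Proposition~\ref{prop:element_q}. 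Renaming the abelianised images of $x,y$ as elements $x,y\in\gsigmab$ and the surviving integer parameters as $m,n$ produces exactly the stated relation~(\ref{eq:lema_1}), the isolated $B_{0,0}$, $B_{a_2,a_1\varepsilon_{n+i}}$ and $B_{a_2-b_2,a_1-b_1}$ terms coming from the stray generators $B^{\delta_{n_1}}$, $B^{\pm a_1}$ and the leading $u,v$-monomials.

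The main obstacle I expect is the middle step: correctly rewriting the free-group word~(\ref{eq:yexp}) in terms of the catalogue $T,I,O,J,Q$ while keeping track of the non-abelian $\theta$- and $\rho$-actions, since the action $\theta(m,n)$ twists $u,v,B$ nontrivially and the conjugators $c_{p,q}$ only become the clean shift~(\ref{eq:homo_gsigmab_cpq}) after abelianisation. The delicate point is ensuring that the indices produced by commuting $u$-powers past $v$-powers match exactly the arguments appearing in~(\ref{eq:lema_1})—for instance that the $Q$-contribution has the form $\widetilde Q_{-2\delta_i m,s_2-n}$ and the $J$- and $O$-contributions carry the precise shifts dictated by $a_1,a_2,b_1,b_2$. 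This is a long but mechanical verification once the abelianisation formulae are in hand; the earlier propositions are designed precisely so that every factor has a known image, so the argument reduces to a careful, if tedious, substitution and collection of terms with no further conceptual input required.
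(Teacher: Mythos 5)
Your proposal is correct and matches the paper's own proof essentially step for step: the paper likewise extracts $a,b$ from Lemma~\ref{lem:algebra_tau_2}, determines $b_2$, $b_1$, $a_1$, $a_2$ and $g$ from conditions~(\ref{eq:algebra_ii})--(\ref{eq:algebra_iii}) together with Lemma~\ref{lem:lem48ggl2}, rewrites the word identity~(\ref{eq:yexp}) by identifying its bracketed factors with the catalogue words $T$, $I$, $O$, $J$, $Q$ (equations~(\ref{eq:resultOQ})--(\ref{eq:calcOcJ})), and then projects to $\gsigmab$ via the abelianised maps, renaming $m=m_2$ and $n=n_2$. The ``long but mechanical'' index bookkeeping you defer is indeed where the bulk of the paper's computation lies, but your plan, including the observation that Proposition~\ref{prop:element_q} exists precisely to absorb the cross-terms, is the right one.
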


\begin{proof}
Suppose that $\alpha \in [\torus, \ast;\klein, \ast]$ does not have the Borsuk-Ulam property with respect to $\tau_2$. Then there exist $a,b \in P_2 (\klein)$ such that conditions~(\ref{eq:algebra_i})--(\ref{eq:algebra_iii}) of Lemma~\ref{lem:algebra_tau_2} hold. With the notation of Lemma~\ref{lem:lem48ggl2}:
%By Proposition~\ref{prop:p2_k2}, there exist $a_1,a_2,b_1,b_2, m_1,n_1,m_{2}, n_{2} \in \z$ and  $x,y \in \gsigma$ such that $a=(u^{a_1} v^{a_2} x ; m_1 , n_1)$ and $b=(u^{b_1} v^{b_2} y ; m_{2} , n_{2})$. \comj{Here is an attempt to make use of computations that were already carried out in~\cite{GonGuaLaa2}:} We will make use of some of the computations of the proof of~\cite[Lemma~4.8]{GonGuaLaa2}. Exchanging the rôles of $a$ and $b$ in~\cite[equation~(4.4)]{GonGuaLaa2}, replacing $b$ by $a$ in~\cite[equation~(4.5)]{GonGuaLaa2}, and then replacing $a$ (resp.\ $b$) by $ab$ (resp.\ $a$) in~\cite[equation~(4.6)]{GonGuaLaa2}, it follows that:
%\begin{equation*}
%(p_{1})_{\#}(ab l_{\sigma}(a))= (m_{1}+(-1)^{\delta_{n_{1}}}m_{2}+(-1)^{\delta_{n_{1}+n_{2}}}(a_{1}+(-1)^{\delta_{a_{2}}}m_{1}), 2n_{1}+a_{2}+n_{2}).
%\end{equation*}
%Since $(p_{1})_{\#}(ab l_{\sigma}(a))=(p_{1})_{\#}(b)$ by Lemma~\ref{lem:algebra_tau_2}(\ref{eq:algebra_i}), by comparing the first (resp.\ second) coordinates, $a_{1}$ is even  (resp.\ $a_{2}=-2n_{1}$, so $a_{2}$ is even). 
\begin{equation}\label{eq:defm1n1}
(m_1,n_1)=(\delta_{i+1}\delta_{j+1}r_1,2s_1+i)
\end{equation} 
by Lemma~\ref{lem:algebra_tau_2}(\ref{eq:algebra_ii}), and $(\delta_{i+1}\delta_{j+1}r_2,2s_2+j)=(m_{2},n_{2}) (b_1,b_2)(m_{2},n_{2})$ by Lemma~\ref{lem:algebra_tau_2}(\ref{eq:algebra_iii}) and Proposition~\ref{prop:p2_k2}, so:
\begin{equation}\label{eq:defb2}
b_2=-2n_{2}+2s_2+j
\end{equation}
by comparing the second coordinates. Thus:
%Thus:
%therefore $a=(u^{a_1} v^{a_2} x ; \delta_{i+1}\delta_{j+1}r_1 , 2s_1+i)$. By Lemma~\ref{lem:algebra_tau_2}(\ref{eq:algebra_iii}) we have
%
\begin{align}
(\delta_{i+1}\delta_{j+1}r_2,2s_2+j)&=(m_{2},n_{2}) (b_1,- 2n_{2} +2s_2+j)(m_{2},n_{2})= (m_{2}+\varepsilon_{n_{2}} b_1 , -n_{2}+2s_2+j) (m_{2},n_{2})\notag\\
&= (m_{2}+\varepsilon_{n_{2}} b_1+\varepsilon_{-n_{2}+j} m_2 , 2s_2+j).\label{eq:imp1blsb}
\end{align}
The first coordinate of~(\ref{eq:imp1blsb}) yields $\varepsilon_{n_{2}} b_1=\delta_{i+1}\delta_{j+1}r_2-m_{2}-\varepsilon_{-n_{2}+j} m_{2}$, and so:
%\begin{equation}\label{eq:lemma_1_1}
%(m,n) (b_1,b_2)(m,n) =(\delta_{i+1}\delta_{j+1}r_2,2s_2+j).
%\end{equation}
%%
%From the second coordinate of~(\ref{eq:lemma_1_1}) we have $b_2 =-2n +2s_2+j$. So we have
%
%\begin{align*}
%(m,n) (b_1,-2n+2s_2+j)(m,n) &=(\delta_{i+1}\delta_{j+1}r_2,2s_2+j)\\
%(m+\varepsilon_n b_1 , -n+2s_2+j) (m,n) &=(\delta_{i+1}\delta_{j+1}r_2,2s_2+j) \\
%(m+\varepsilon_n b_1+\varepsilon_{-n+j} m , 2s_2+j) &=(\delta_{i+1}\delta_{j+1}r_2,2s_2+j)
%\end{align*}
%
%and therefore from the first coordinate we have
%
\begin{equation}\label{eq:defb1}
%\varepsilon_n b_1  &= \delta_{i+1}\delta_{j+1}r_2-m-\varepsilon_{-n+j} m  \\
b_1 =\delta_{i+1}\delta_{j+1}\varepsilon_{n_{2}} r_2- (\varepsilon_j+\varepsilon_{n_{2}}) m_{2}  =\delta_{i+1}\delta_{j+1}\varepsilon_{n_{2}} r_2 +2  \delta_{j+n_{2}+1} \varepsilon_{j+1} m_{2}.
\end{equation}
Using the relations $(\varepsilon_{i}-1)=-2\delta_{i}$ and $\delta_{i+1}(1+\varepsilon_{n_{2}+i})=2\delta_{i+1}\delta_{n_{2}+1}$, as well as Lemma~\ref{lem:algebra_tau_2}(\ref{eq:algebra_i}),~(\ref{eq:a1a2}) and~(\ref{eq:defm1n1}), we have:
\begin{equation}\label{eq:defa1a2}
\text{$a_2=-4s_1-2i$ and $a_{1}=-2 (\delta_{n_{2}+1} \delta_{i+1}\delta_{j+1}r_1+\delta_i \varepsilon_{n_{2}} m_{2})$}
\end{equation}
Let $g=m_{1}+\varepsilon_i m_{2}+\varepsilon_{n_{2}+i} a_1$. Then by~(\ref{eq:yexp}) and Proposition~\ref{prop:p2_k2}, there exist $x,y \in \gsigma$ such that:
\begin{align}
y =& v^{-b_{2}} u^{a_{1}-b_{1}} v^{a_{2}} x \theta(m_{1},\delta_{n_{1}})(u^{b_{1}}v^{b_{2}}y)\ldotp\notag\\
&\theta(m_{1}+(-1)^{\delta_{n_{1}}}m_{2},n_{1}+n_{2})((Bu^{-1})^{a_{1}} B^{-a_{1}} \theta(a_{1},0)((Bv^{2})^{-a_{2}/2}) \theta(a_{1},0)(\rho(x) B^{\delta_{n_{1}}}))\notag\\
=& v^{-b_2} u^{a_1-b_1} v^{a_2} x B^{m_{1}-\delta_i} u^{\varepsilon_i b_1} (B^{\delta_i} v u^{-2m_{1}})^{b_2} B^{-m_{1}+\delta_i}\theta(m_{1},\delta_i)(y)\, 
B^{m_{1}+\varepsilon_i m_{2}-\delta_{n_{2}+i}} (B^{\varepsilon_{n_{2}+i}}  u^{\varepsilon_{n_{2}+i+1}} )^{a_1}\ldotp\notag\\
& (B^{\varepsilon_{n_{2}+i}}(B^{\delta_{n_{2}+i}} v u^{-2g})^2)^{-a_2/2} B^{-g+\delta_{n_{2}+i}} \theta(g, \delta_{n_{2}+i})(\rho(x)) B^{\delta_i \varepsilon_{n_{2}+i}}\notag\\
=& v^{a_2-b_2} [v^{-a_2},u^{a_1-b_1}] u^{a_1-b_1}  x B^{m_{1}-\delta_i}  u^{-a_1+b_1} \left(u^{a_1-b_1 +\varepsilon_i b_1} v^{b_2}  
u^{-a_1 \varepsilon_{n_{2}+i}} v^{-b_2} \right) v^{-a_2+b_2} . v^{a_2} u^{a_1 \varepsilon_{n_{2}+i}}\ldotp\notag\\
& \left(v^{-b_2}(B^{\delta_i} v u^{-2m_{1}})^{b_2} \right) B^{-m_{1}+\delta_i}\theta(m_{1},\delta_i)(y) B^{m_{1}+\varepsilon_i m_{2}-\delta_{n_{2}+i}}  
u^{-a_1\varepsilon_{n_{2}+i}} \left(u^{a_1\varepsilon_{n_{2}+i}} (B^{\varepsilon_{n_{2}+i}}  u^{\varepsilon_{n_{2}+i+1}} )^{a_1} \right)\ldotp\notag\\
& v^{-a_2} \left(v^{a_2} (B^{\varepsilon_{n_{2}+i}}(B^{\delta_{n_{2}+i}} v u^{-2g})^2)^{-a_2/2} \right) B^{-g+ \delta_{n_{2}+i}} \theta(g, \delta_{n_{2}+i})(\rho(x)) B^{\delta_i \varepsilon_{n_{2}+i}}.\label{eq:calcy}
\end{align}

It remains to identify the four bracketed terms of~(\ref{eq:calcy}) with elements of $\gsigma$. First, by~(\ref{eq:defTIOJ}) and~(\ref{eq:defQ}) we have: 
\begin{align}
O_{s_2-n_{2},2 \delta_i m_{2}-2\delta_{i+1}\delta_{n_{2}+1}r_1}^{-\delta_{j+1}} Q_{-2\delta_i m_{2},s_2-n_{2}}^{\delta_j} &= \begin{cases}
[u^{2 \delta_i m_{2}-2\delta_{i+1}\delta_{n_{2}+1}r_1}, v^{2(s_2-n_{2})}]
& \text{if $j=0$}\\
u^{-2 \delta_i m_{2}} v^{2(s_2-n_{2})+1} u^{-2 \delta_i m_{2}} v^{-2(s_2-n_{2})-1}
& \text{if $j=1$}
\end{cases}\notag\\
&=u^{-2\delta_{i+1}\delta_{j+1}\delta_{n_{2}+1}r_1+2 \delta_i \varepsilon_{j} m_{2}} v^{b_{2}} u^{2\delta_{i+1}\delta_{j+1}\delta_{n_{2}+1}r_1-2 \delta_i  m_{2}} v^{-b_{2}}\label{eq:calcOQ}
\end{align}
using~(\ref{eq:defb2}). Now by~(\ref{eq:defb1}) and (\ref{eq:defa1a2}):
\begin{align}
-a_{1} \varepsilon_{n_{2}+i} &=2\varepsilon_{n_{2}+i}(\delta_{n_{2}+1} \delta_{i+1}\delta_{j+1}r_1+\delta_i \varepsilon_{n_{2}} m_{2})= 2(\delta_{n_{2}+1} \delta_{i+1}\delta_{j+1}r_1-\delta_i m_{2})\label{eq:calcOQi}\;\text{and}\\
a_{1}+(\varepsilon_i-1)b_{1}&= -2 (\delta_{n_{2}+1} \delta_{i+1}\delta_{j+1}r_1+\delta_i \varepsilon_{n_{2}} m_{2})-2\delta_{i} (\delta_{i+1}\delta_{j+1} \varepsilon_{n_{2}} r_2 +2  \delta_{j+n_{2}+1} \varepsilon_{j+1} m_{2})\notag\\
&= -2\delta_{n_{2}+1} \delta_{i+1}\delta_{j+1}r_1-2\delta_{i}m_{2}(\varepsilon_{n_{2}}+2\delta_{j+n_{2}+1} \varepsilon_{j+1})\notag\\
&=-2\delta_{n_{2}+1} \delta_{i+1}\delta_{j+1}r_1+2\delta_{i}\varepsilon_{j}m_{2}.\label{eq:calcOQii}
\end{align}
It follows from~(\ref{eq:calcOQ}),~(\ref{eq:calcOQi}) and~(\ref{eq:calcOQii}) that:
\begin{equation}\label{eq:resultOQ}
O_{s_2-n_{2},2 \delta_i m_{2}-2\delta_{i+1}\delta_{n_{2}+1}r_1}^{-\delta_{j+1}} Q_{-2\delta_i m_{2},s_2-n_{2}}^{\delta_j}=u^{a_1-b_1 +\varepsilon_i b_1} v^{b_2} u^{-a_1 \varepsilon_{n_{2}+i}} v^{-b_2}.
\end{equation}
Secondly, by~(\ref{eq:defTIOJ}) and~(\ref{eq:defb2}), we have:
\begin{align}
 J_{\delta_{i+1}(n_{2}-s_2),-2m_{1}} c_{-1,0}(I_{-\delta_i b_2})&=
v^{\delta_{i+1}(\delta_{j}-b_{2})} (vu^{-2m_{1}})^{\delta_{i+1}(b_{2}-\delta_{j})} v^{-1} v^{-\delta_i b_2} (vB)^{\delta_i b_2} v\notag\\
&= v^{-\delta_{i+1}b_{2}}  v^{\delta_{i+1}\delta_{j}} (vu^{-2m_{1}})^{-\delta_{i+1}\delta_{j}} (vu^{-2m_{1}})^{\delta_{i+1}b_{2}}
 v^{-\delta_i b_2} v^{-1}  (vB)^{\delta_i b_2} v\notag\\
 &= v^{-\delta_{i+1}b_{2}}  (vu^{-2m_{1}})^{\delta_{i+1}b_{2}} \ldotp
 v^{-\delta_i b_2} (v^{-1} vBv)^{\delta_i b_2}\notag\\
 &=v^{-\delta_{i+1}b_{2}}  (vu^{-2m_{1}})^{\delta_{i+1}b_{2}} \ldotp
 v^{-\delta_i b_2} (Bv)^{\delta_i b_2}=v^{-b_2}(B^{\delta_i} v u^{-2m_{1}} )^{b_2}\label{eq:calcJI}
\end{align}
using the fact that $j=\delta_{j}$ and $v^{\delta_{i+1}\delta_{j}} (vu^{-2m_{1}})^{-\delta_{i+1}\delta_{j}}=1$ for all $i,j\in \{ 0,1\}$ (recall from~(\ref{eq:defm1n1}) that $m_{1}=\delta_{i+1}\delta_{j+1}r_1$). Thirdly, it follows directly from~(\ref{eq:defTIOJ}) that:
\begin{equation}\label{eq:calcT}
T_{a_1\varepsilon_{n_{2}+i},\delta_{n_{2}+i}} =u^{a_1\varepsilon_{n_{2}+i}}(B^{\varepsilon_{n_{2}+i}}u^{\varepsilon_{n_{2}+i+1}})^{a_1}.
\end{equation}
Finally, since $a_{2}$ is even, we have:
\begin{align}
O_{a_2/2,\delta_{n_2+i+1}} c_{0,\delta_{n_2+i+1}}(J_{a_2/2,-2g+1}) &= [v^{a_{2}}, u^{\delta_{n_2+i+1}}] u^{\delta_{n_2+i+1}} v^{a_{2}} (vu^{-2g+1})^{-a_{2}} u^{-\delta_{n_2+i+1}}\notag\\
&= v^{a_{2}} u^{\delta_{n_2+i+1}} ((vu^{-2g+1})^{2})^{-a_{2}/2} u^{-\delta_{n_2+i+1}}\notag\\
&=v^{a_{2}}  (u^{\delta_{n_2+i+1}}(vu^{-2g+1})^{2}u^{-\delta_{n_2+i+1}})^{-a_{2}/2} \notag\\
&=v^{a_2} (B^{\varepsilon_{n_2+i}}(B^{\delta_{n_2+i}} v u^{-2g})^2)^{-a_2/2}.\label{eq:calcOcJ}
\end{align}
%We leave as an exercise for the reader to show that
%%
%\begin{align*}
%%u^{a_1-b_1 +\varepsilon_i b_1} v^{b_2} u^{-a_1 \varepsilon_{n+i}} v^{-b_2}
%%&= O_{-n+s_2,-2\delta_{i+1}\delta_{n+1}r_1+2 \delta_i m}^{-\delta_j+1} Q_{-2\delta_i m,-n+s_2}^{\delta_j}, \\
%%%
%%v^{-b_2}(B^{\delta_i} v u^{-2m_{1}} )^{b_2} &= J_{\delta_{i+1}(n-s_2),-2m_{1}} c_{0,-1}(I_{\delta_i b_2}) \\
%%
%u^{a_1\varepsilon_{n+i}}(B^{\varepsilon_{n+i}}u^{\varepsilon_{n+i+1}})^{a_1} &= T_{a_1\varepsilon_{n+1},\delta_{n+i}}, \\
%%
%%v^{a_2} (B^{\varepsilon_{n+i}}(B^{\delta_{n+i}} v u^{-2g})^2)^{-a_2/2}&= O_{a_2/2,\delta_{n+i+1}} c_{0,\delta_{n+i+1}}(J_{a_2/2,-2g+1}).
%\end{align*}
Substituting~(\ref{eq:resultOQ})--(\ref{eq:calcOcJ}) in~(\ref{eq:calcy}) and using~(\ref{eq:defcpq}) and~(\ref{eq:defTIOJ}), we obtain:
%Using the previous equality then equality~(\ref{eq:fuv2}) becomes to
\begin{multline}\label{eq:fuv3}
y=c_{a_2-b_2,0} (O_{2s_1+i,a_1-b_1}) c_{a_2-b_2,a_1-b_1}(x) B_{a_2-b_2,a_1-b_1}^{m_{1}-\delta_i} c_{a_2-b_2,0}(O_{s_2-n_{2},2 \delta_i m_{2} -2\delta_{i+1}\delta_{n_{2}+1}r_1}^{-\delta_{j+1}} Q_{-2\delta_i m_{2},s_2-n_{2}}^{\delta_j}) \\
c_{a_2,a_1\varepsilon_{n_{2}+i}}(J_{\delta_{i+1}(n_{2}-s_2),-2m_{1}}) c_{a_2,a_1\varepsilon_{n_{2}+i}}(c_{-1,0}(I_{-\delta_i b_2})) B_{a_2,a_1\varepsilon_{n_{2}+i}}^{-m_{1}+\delta_i}
c_{a_2,a_1\varepsilon_{n_{2}+i}}(\theta(m_{1},\delta_i)(y)) B_{a_2,a_1\varepsilon_{n_{2}+i}}^{m_{1}+\varepsilon_i m_{2}-\delta_{n_{2}+i}} \\ 
c_{a_2,0}(T_{a_1\varepsilon_{n_{2}+i},\delta_{n_{2}+i}}) 
 O_{a_2/2,\delta_{n_{2}+i+1}} c_{0,\delta_{n_{2}+i+1}}(J_{a_2/2,-2g+1}) B_{0,0}^{-g+ \delta_{n_{2}+i}} \theta(g, \delta_{n_{2}+i})(\rho(x)) B_{0,0}^{\delta_i \varepsilon_{n_{2}+i}}.
\end{multline}
By~(\ref{eq:homo_gsigmab_cpq}), for all $k,l\in \z$, we have:
\begin{align}\label{eq:coc}
(c_{a_2,a_1\varepsilon_{n_{2}+i}})_\ab ((c_{-1,0})_\ab (B_{k,l})) 
= &(c_{a_2,a_1\varepsilon_{n_{2}+i}})_\ab (B_{k-1,l})
= B_{k+a_2-1,l+\varepsilon_{k}a_1\varepsilon_{n_2+i+1}} \nonumber \\
= & (c_{a_2-1,a_1\varepsilon_{n_2+i+1}})_\ab (B_{k,l}).
\end{align}
By abuse of notation, let $x$ and $y$ denote the projection in $\gsigmab$ of the elements $x$ and $y$ respectively, and let $m=m_{2}$ and $n=n_{2}$.
%{\bf !!! V !!! Alterei um pouco este \'ultimo par\'agrafo} We also denote by $x$ and $y$ the projection of the elements $x$ and $y$ in $\gsigmab$ respectively and we denote $m_2$ and $n_2$ simply by $m$ and $n$ respectively. 
Projecting~(\ref{eq:fuv3}) in $\gsigmab$ and using~(\ref{eq:defm1n1}),~(\ref{eq:defa1a2}),~(\ref{eq:coc}) and Proposition~\ref{prop:gsigma}(\ref{it:wordsc}), we obtain~(\ref{eq:lema_1}) as required.
\end{proof}

\section{Proof of Theorem~\ref{th:BORSUK_TAU_2}}\label{sec:borsuk_2}

%\comj{If you agree with Propositions~\ref{prop:tau_2_case_14} and~\ref{prop:tau_2_case_23} then we can delete the statements of Propositions~\ref{prop:tau_2_case_1}--\ref{prop:tau_2_case_4}.}
%{\bf !!!DV!!!10March  YES, we agree with the statements of the new propositions. Then the old Prop 1,2,3 ,4 were removed. So we have 5 Propositions. Those 5 Propositions were written in a new order. The choice of the order was motivated by the statement of Theorem 1.3. Further, we have revised the style of the 5 propositions. The  two propositions which were introduced, we no longer have to formulas to describe the homomorphisms. In all five the variables $z$ and $w$ were introduced  and it also looks that there is no clash of notation}  

In this section, we prove Theorem~\ref{th:BORSUK_TAU_2}. Its proof will follow from the following five propositions. 
\begin{proposition}\label{prop:tau_2_case_23I}
%\comj{This combines the results of Propositions~\ref{prop:tau_2_case_2} and~\ref{prop:tau_2_case_3}.}
%\textcolor{yellow}{Suppose that 
% $\alpha_\#: \begin{cases}
%(1,0) \longmapsto (0 , 2s+1) \\
%(0,1) \longmapsto (0,0)
%\end{cases}$ for some $s \in \z$ or $\alpha_\#: \begin{cases}
%(1,0) \longmapsto (0 , 2s+1) \\
%(0,1) \longmapsto (0 , 2k+1)
%\end{cases}$ for some $s \in \z$ and $k \in \{ 0,1 \}$.}
Suppose that 
 $\alpha_\#: \begin{cases}
(1,0) \longmapsto (0 , 2s+1) \\
(0,1) \longmapsto (0,(2z+1)w)
\end{cases}$ for some $s \in \z$ and $z,w \in \{ 0,1 \}$. Then $\alpha$ has the Borsuk-Ulam property with respect to $\tau_2$.
\end{proposition}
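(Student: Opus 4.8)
The plan is to argue by contradiction using the algebraic criterion of Lemma~\ref{lem:lemma_1}. First I would match the hypothesis to the normal form of that lemma: writing $2s+1=2s_1+i$ forces $i=1$ and $s_1=s$, and writing $(2z+1)w=2s_2+j$ forces $j=w$, together with $s_2=0$ when $w=0$ and $s_2=z$ when $w=1$. Since $i=1$ gives $\delta_{i+1}=0$, the integers $r_1,r_2$ play no role (their coefficients $\delta_{i+1}\delta_{j+1}r_\ast$ vanish), consistent with the first coordinates of $\alpha_\#$ being $0$. Assuming that $\alpha$ does not have the Borsuk-Ulam property with respect to $\tau_2$, Lemma~\ref{lem:lemma_1} furnishes $m,n\in\z$ and $x,y\in\gsigmab$ satisfying~(\ref{eq:lema_1}). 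Setting $i=1$ I would simplify the parameters to $a_1=-2\varepsilon_n m$, $a_2=-4s-2$, $a_1\varepsilon_{n+i}=2m$, $g=m$ and $b_2=2s_2-2n+j$, so that every coefficient in~(\ref{eq:lema_1}) becomes explicit in $m,n,s,s_2$ and $j$.

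The heart of the argument is to apply to~(\ref{eq:lema_1}) a mod-$2$ augmentation-type homomorphism $\gsigmab\to\z_2$ chosen so that the four terms involving the unknowns $x$ and $y$ cancel in pairs, leaving only the explicit terms. For this I would use the homomorphisms defined on the basis by $\varphi(B_{k,l})=1$ (the total count mod $2$) and by $\varphi_0(B_{k,l})=1$ if $k$ is even and $0$ otherwise (the count of even-first-index generators mod $2$). By~(\ref{eq:homo_gsigmab_theta})--(\ref{eq:homo_gsigmab_cpq}), reducing mod $2$ kills the signs $\varepsilon_n,\varepsilon_k$, and each of $\theta(m,n)_\ab$, $\rho_\ab$ and $c_{p,q}$ permutes the basis; hence $\varphi$ is invariant under all three, while $\varphi_0$ is invariant under $\theta(m,n)_\ab$ and $\rho_\ab$ (which fix, respectively negate, the first index) and under $c_{p,q}$ precisely when $p$ is even. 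In either case the $x$-pair $(c_{a_2-b_2,a_1-b_1})_\ab(x)+\theta(g,\delta_{n+i})_\ab(\rho_\ab(x))$ and the $y$-pair $(c_{a_2,a_1\varepsilon_{n+i}})_\ab(\theta(\delta_{i+1}\delta_{j+1}r_1,\delta_i)_\ab(y))-y$ each map to $0$.

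For the case $w=1$ (so $j=1$) I would apply $\varphi$. Using Proposition~\ref{prop:gsigma}(\ref{it:wordsc}) and Proposition~\ref{prop:element_q} I would evaluate $\varphi$ on the words $\widetilde{T}$, $\widetilde{O}$, $\widetilde{I}$, $\widetilde{J}$ and $\widetilde{Q}$ appearing in~(\ref{eq:lema_1}); the key facts are that $\varphi(\widetilde{O}_{k,l})=0$ always (each summand contributes two generators), that $\varphi(\widetilde{Q}_{-2m,\,\cdot})\equiv\lvert 2m\rvert\equiv 0$, that $\varphi(\widetilde{I}_{-b_2})\equiv\lvert b_2\rvert\equiv 1$ and that $\varphi(\widetilde{J}_{-2s_1-i,\,1-2g})\equiv 1$. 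Summing these with the three isolated $B$-terms shows that the explicit part of~(\ref{eq:lema_1}) has $\varphi$-value $\equiv j=1\pmod 2$, contradicting that the left-hand side is $0$. For the case $w=0$ (so $j=0$) the augmentation $\varphi$ only yields the trivial identity $0=0$, so here I would instead apply $\varphi_0$. The crucial point is that when $j=0$ one has $b_2=2s_2-2n$ even, whence both displacements $a_2-b_2$ and $a_2$ occurring in the $x$- and $y$-terms are even, which is exactly what makes $\varphi_0$ annihilate those terms. Evaluating $\varphi_0$ on the explicit words by tracking the parity of the first index in Proposition~\ref{prop:gsigma}(\ref{it:wordsc}) — noting that every generator in a $\widetilde{J}$ word has odd first index, that the odd shift $a_2-1$ in the $\widetilde{I}$-term exchanges the even- and odd-index counts, and that $\lvert n\rvert\equiv\delta_n\pmod 2$ — I would find that the explicit part has $\varphi_0$-value $\equiv 1\pmod 2$, again a contradiction.

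The main obstacle is the parity bookkeeping in the last step: one must expand the words of~(\ref{eq:defTIOJ}) and~(\ref{eq:defQ}) via Propositions~\ref{prop:gsigma}(\ref{it:wordsc}) and~\ref{prop:element_q}, and keep careful track of how each $c_{p,q}$ shifts the first index, since a single miscounted shift (for instance overlooking that $a_2-1$ is odd while $a_2$ and $a_2-b_2$ are even) would alter the final parity and destroy the contradiction. Verifying the invariance of $\varphi$ and $\varphi_0$ under the relevant operators, and the pairwise cancellation of the $x$- and $y$-terms, is by contrast immediate from~(\ref{eq:homo_gsigmab_theta})--(\ref{eq:homo_gsigmab_cpq}).
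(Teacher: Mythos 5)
Your proposal is correct and follows essentially the same route as the paper: reduce to equation~(\ref{eq:lema_1}) via Lemma~\ref{lem:lemma_1} with $i=1$, $s_1=s$, $j=w$, $s_2=zw$, then annihilate the unknowns $x,y$ with $\ztwo$-valued homomorphisms defined on the basis $\{ B_{k,l} \}$ of $\gsigmab$ and evaluate the remaining explicit terms to reach $\overline{1}=\overline{0}$. The paper packages your two homomorphisms into the single $w$-dependent map $\xi(B_{k,l})=\delta_{w}\overline{1}+\delta_{w+1}\overline{k}$; for $w=0$ its odd-first-index count is just the complement of your even-first-index count $\varphi_0$ (the two differ by the augmentation, which vanishes on the whole equation in that case), so the computations are equivalent.
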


\begin{proposition}\label{prop:tau_2_case_14I}
%\comj{This combines the results of Propositions~\ref{prop:tau_2_case_1} and~\ref{prop:tau_2_case_4}.}
%\textcolor{yellow}{Suppose that $\alpha_\#: 
%\begin{cases}
%(1,0) \longmapsto (0 , 2s+1) \\
%(0,1) \longmapsto (0 , 2)
%\end{cases}$	for some $s \in \z$ or $\alpha_\#: 
%\begin{cases}
%(1,0) \longmapsto (0,0) \\
%(0,1) \longmapsto (0 , 2j+1)
%\end{cases}$ for some $j \in \{ 0,1 \}$.}
Suppose that $\alpha_\#:
\begin{cases}
(1,0) \longmapsto (0,(2s+1)(1-w)) \\
(0,1) \longmapsto (0,(2z-1)w + 2)
\end{cases}
$
for some $s \in \z$ and $z,w \in \{0,1\}$.
 Then $\alpha$ does not have the Borsuk-Ulam property with respect to $\tau_2$.
\end{proposition}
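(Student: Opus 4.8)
The plan is to invoke Lemma~\ref{lem:algebra_tau_2}: to show that $\alpha$ \emph{fails} the Borsuk-Ulam property with respect to $\tau_2$, I would exhibit explicit elements $a,b\in P_2(\klein)$ satisfying conditions~(\ref{eq:algebra_i})--(\ref{eq:algebra_iii}). The hypothesis $\alpha_\#(1,0)=(0,(2s+1)(1-w))$ splits the argument into two cases: for $w=1$ the first generator is sent to the trivial element of $\zsdz$, while for $w=0$ it is sent to $(0,2s+1)$. These two cases are genuinely different in difficulty, the second being the substantial one.

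For $w=1$ I expect an almost immediate construction. Here $\alpha_\#(1,0)=(0,0)$ and $\alpha_\#(0,1)=(0,2z+1)$, so I would take $a=\id$ and $b=(v^{2z+1};0,0)$. Then~(\ref{eq:algebra_i}) holds trivially since $l_\sigma(\id)=\id$ gives $a\,b\,l_\sigma(a)=b$; condition~(\ref{eq:algebra_ii}) reads $(p_1)_\#(a)=(0,0)$; and for~(\ref{eq:algebra_iii}) I would compute $b\,l_\sigma(b)$ directly from the formulas for $l_\sigma$ in Proposition~\ref{prop:p2_k2}, obtaining $b\,l_\sigma(b)=(v^{2z+1}(uv)^{-(2z+1)}uB;0,2z+1)$ and hence $(p_1)_\#(b\,l_\sigma(b))=(0,2z+1)=\alpha_\#(0,1)$.

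For $w=0$ we have $\alpha_\#(1,0)=(0,2s+1)$ and $\alpha_\#(0,1)=(0,2)$. I would first dispose of~(\ref{eq:algebra_iii}) by choosing $b=(\id;0,1)$, for which a one-line computation gives $b\,l_\sigma(b)=(B^{-1};0,2)$, so $(p_1)_\#(b\,l_\sigma(b))=(0,2)$. With this $b$ fixed and $(p_1)_\#(a)=(0,2s+1)$ imposed by~(\ref{eq:algebra_ii}), Lemma~\ref{lem:lem48ggl2} (applied with $m_1=0$, $n_1=2s+1$, $m_2=0$, $n_2=1$) shows that the word part of $a$ must be $v^{-(4s+2)}x$ with $x\in\gsigma$, and that~(\ref{eq:algebra_i}) then forces, after setting $w:=v^{-(4s+2)}x$ and using $\rho(v^{-(4s+2)})=(uv)^{4s+2}$, the single relation $w\,\rho(w)=B^{-1}$ together with $g(w)=(0,-(4s+2))$.

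The crux is therefore to solve $w\,\rho(w)=B^{-1}$ with $g(w)=(0,-(4s+2))$, and this is the step I expect to be the main obstacle. I would be guided by the observation that $((uv)^{-2};0,0)\sigma$ squares to the central element $(\id;0,-2)$, so that a suitable conjugate of its $(2s+1)$-st power should furnish the required square root; concretely I expect $w=v^{-2s}(uv)^{-(2s+2)}$ to work. Its verification is a telescoping computation: $g(w)=(0,-(4s+2))$ is immediate, and using $\rho(B)=B$, $\rho(v^{-2})=Bv^2$ and $(uv)^2=Bv^2$ (all read off from Proposition~\ref{prop:p2_k2}) one gets $\rho(w)=(Bv^2)^s(Bv^2B^{-1})^{s+1}$, whence $w\,\rho(w)=v^{-2s}(Bv^2)^{-1}Bv^{2(s+1)}B^{-1}=B^{-1}$, the intermediate powers of $v$ cancelling. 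Setting $a=(w;0,2s+1)$ I would then verify~(\ref{eq:algebra_i}) directly, rather than through~(\ref{eq:yexp}): since $l_\sigma(a)=(\rho(w)B;0,-2s-1)$ and $\theta(0,2s+2)$ is the identity, one computes $a\,b\,l_\sigma(a)=(w\,\rho(w)B;0,1)=(\id;0,1)=b$. Once the closed form of $w$ is in hand, conditions~(\ref{eq:algebra_i})--(\ref{eq:algebra_iii}) all follow from short direct calculations, and Lemma~\ref{lem:algebra_tau_2} then yields that $\alpha$ does not have the Borsuk-Ulam property with respect to $\tau_2$.
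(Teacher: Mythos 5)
Your proposal is correct and follows essentially the same route as the paper: both invoke Lemma~\ref{lem:algebra_tau_2} and exhibit explicit elements $a,b\in P_2(\klein)$, and in the substantial case $w=0$ your $a=(v^{-2s}(uv)^{-(2s+2)};0,2s+1)$ and $b=(\id;0,1)$ coincide with the paper's choices (since $(uv)^2=Bv^2$), while for $w=1$ your $b=(v^{2z+1};0,0)$ is only a trivial variant of the paper's $(v;0,z)$. The verifications differ only in presentation (you compute $l_\sigma(a)$ directly rather than via the paper's general formula and the projection $p_F$), and all your computations check out.
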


\begin{proposition}\label{prop:tau_2_case_5I}
Suppose that $\alpha_\#: \begin{cases}
	 (1,0) \longmapsto (0 , 2s) \\
	 (0,1) \longmapsto (0 , 2z+1) 
	\end{cases}$ for some $s \in \z \setminus\{ 0 \}$ and $z \in \{ 0,1 \}$. Then $\alpha$ has the Borsuk-Ulam property with respect to $\tau_2$.
\end{proposition}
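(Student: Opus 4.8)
The plan is to prove the contrapositive by means of Lemma~\ref{lem:lemma_1}. The hypothesis places $\alpha_\#$ in the framework of that lemma with $i=0$ and $j=1$, so that $\delta_i=0$, $\delta_{i+1}=\delta_j=1$, $\delta_{j+1}=0$ and $\delta_{i+1}\delta_{j+1}=0$, with $s_1=s$ and $s_2=z$; the parameters $r_1,r_2$ play no role since $\delta_{i+1}\delta_{j+1}=0$. Thus if $\alpha$ did \emph{not} have the Borsuk-Ulam property, Lemma~\ref{lem:lemma_1} would supply $m,n\in\z$ and $x,y\in\gsigmab$ satisfying the specialisation of~(\ref{eq:lema_1}) to $i=0$, $j=1$, and my aim is to show that this equation has no solution precisely because $s\neq 0$.

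First I would substitute $i=0$, $j=1$ into~(\ref{eq:lema_1}). This forces $a_1=0$, $a_2=-4s$ and $g=m$, and by Proposition~\ref{prop:gsigma}(\ref{it:wordsb}) and Proposition~\ref{prop:element_q} the contributions built from $\widetilde{T}_{0,\cdot}$, $\widetilde{I}_0$, $\widetilde{J}_{\cdot,0}$ and $\widetilde{Q}_{0,\cdot}$ all vanish. What remains has the shape
\begin{equation*}
\mathcal{A}(x)+\mathcal{B}(y)+K=0,\qquad \mathcal{A}(x)=(c_{-4s-b_2,\,-2\delta_n m})_\ab(x)+\theta(m,\delta_n)_\ab(\rho_\ab(x)),\quad \mathcal{B}(y)=(c_{-4s,0})_\ab(y)-y,
\end{equation*}
where $K$ collects the explicit terms coming from $\widetilde{O}_{2s,\,-2\delta_n m}$, $\widetilde{J}_{-2s,\,1-2m}$, $\widetilde{O}_{-2s,\,\delta_{n-1}}$ and the two generators $B_{0,0}$, $B_{-4s,0}$. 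Here $b_2=2z-2n+1$ is odd, so $p:=-4s-b_2$ is odd.

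The main idea is to apply a homomorphism that annihilates both unknowns while detecting $K$. Since $s\neq 0$, the operator $\mathcal{B}$ shifts the first index by the nonzero even integer $-4s$, so I would use a homomorphism $f\colon\gsigmab\to\ztwo$ depending only on the first index reduced modulo $4s$ (this is exactly where $s\neq 0$ enters) that is symmetric under the reflection $\overline{k}\mapsto p-\overline{k}$, i.e.\ $f(\overline{k})=f(p-\overline{k})$. Periodicity modulo $4s$ gives $f(\mathcal{B}(y))=0$, while reading off first indices from~(\ref{eq:homo_gsigmab_theta})--(\ref{eq:homo_gsigmab_cpq}) yields $f(\mathcal{A}(x))\equiv\sum_{k,l}c_{k,l}\bigl(f(\overline{k+p})+f(\overline{-k})\bigr)\pmod 2$, the sign factors $\varepsilon_k\varepsilon_n$ disappearing modulo $2$; the reflection symmetry $f(\overline{k+p})=f(\overline{-k})$ then forces $f(\mathcal{A}(x))=0$. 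Applying $f$ to the displayed equation therefore gives $f(K)=0$, and it remains to contradict this. Using Proposition~\ref{prop:gsigma}(\ref{it:wordsc}) I would compute, for each residue $\overline{k}$ modulo $4s$, the total coefficient $N_{\overline{k}}\in\ztwo$ of the generators of $K$ with first index in the class $\overline{k}$: the term $\widetilde{O}_{2s,\,-2\delta_n m}$ contributes $0$ (its coefficients are even, the inner index $-2\delta_n m$ being even); $\widetilde{J}_{-2s,\,1-2m}$ contributes $1$ on every odd residue (its $2\lvert s\rvert$ odd first indices exhaust the odd classes modulo $4\lvert s\rvert$); $\widetilde{O}_{-2s,\,\delta_{n-1}}$ is nonzero only when $n$ is even, in which case it contributes $1$ on every residue; and $B_{0,0}$, $B_{-4s,0}$ cancel modulo $4s$. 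Hence $N_{\overline{k}}$ is the indicator of the odd residues when $n$ is odd and of the even residues when $n$ is even. As $p$ is odd, the reflection $\overline{k}\mapsto p-\overline{k}$ has no fixed point and pairs each even residue with an odd one, so every orbit $\{\overline{k}_0,p-\overline{k}_0\}$ satisfies $N_{\overline{k}_0}+N_{p-\overline{k}_0}=1$. Taking $f$ to be the indicator of a single such orbit gives $f(K)=1\neq 0$, the required contradiction; the case $s<0$ is identical after replacing $4s$ by $4\lvert s\rvert$.

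The step I expect to be the main obstacle is this parity count of $K$: one must extract the first-index distributions of $\widetilde{O}_{\pm 2s,\cdot}$ and $\widetilde{J}_{-2s,\cdot}$ from the double sums in Proposition~\ref{prop:gsigma}(\ref{it:wordsc}), track the cancellations among the various terms (in particular of the two isolated generators $B_{0,0}$ and $B_{-4s,0}$), and verify that the $2\lvert s\rvert$ relevant first indices genuinely cover all residues of the appropriate parity modulo $4\lvert s\rvert$ — which is exactly the property that breaks down when $s=0$, in agreement with part~(c) of Theorem~\ref{th:BORSUK_TAU_2}.
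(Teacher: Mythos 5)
Your proof is correct and follows essentially the same route as the paper's: reduce via Lemma~\ref{lem:lemma_1} (with $i=0$, $j=1$) to an equation in $\gsigmab$, annihilate the unknowns $x,y$ with a homomorphism to $\ztwo$ that depends only on the first index modulo $4s$ and is symmetric under the reflection $k\mapsto (2n-2z-1)-k$, and then show the explicit terms map to $\overline{1}$. The paper's $\xi$ is precisely the indicator of the reflection orbit $\{\overline{0},\overline{2n-2z-1}\}$, so your ``any orbit works'' observation is a mild generalisation of the same computation, resting on the same counting facts about $\widetilde{O}_{\pm 2s,\cdot}$ and $\widetilde{J}_{-2s,\cdot}$.
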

	
\begin{proposition}\label{prop:tau_2_case_7I}
%\comj{new version of Vinicius:}{\bf !!! DV !!! We agree with this version.}
Suppose that  $\alpha_\#: \begin{cases}
	 (1,0) \longmapsto (r_1 , 2s) \\
	 (0,1) \longmapsto (r_2 , 2z)
\end{cases}$	 for some $r_1, r_2, s \in \z$ and $z \in \{ 0,1 \}$, where $r_1 \geq 0$, satisfy one of the following conditions:
\begin{enumerate}[(i)]
\item\label{it:case_7I_1} $r_2 s \neq 0$.
\item\label{it:case_7I_2} $r_1 > 0$, $r_2$ is even and $z = 0$.
\item\label{it:case_7I_3} $r_1 = r_2 = z = 0$ and $s \neq 0$.
\end{enumerate}
Then $\alpha$ has the Borsuk-Ulam property with respect to $\tau_2$.
%
% Suppose that $\alpha_\#: \begin{cases}
%	 (1,0) \longmapsto (r_1 , 2s) \\
%	 (0,1) \longmapsto (r_2 , 2z)
%\end{cases}$	 for some $r_1, r_2, s \in \z$ and $z \in \{ 0,1 \}$ such that $r_1 \geq 0$ and $r_2\ldotp s \neq 0$, or $r_1 >0$, $r_2$ is even and $z=0$. Then $\alpha$ has the Borsuk-Ulam property with respect to $\tau_2$.
\end{proposition}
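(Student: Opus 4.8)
The plan is to argue by contraposition: assuming that $\alpha$ does not have the Borsuk-Ulam property with respect to $\tau_2$, I will derive a contradiction. Since in every configuration of the statement $\alpha_\#$ sends both generators to elements with even second coordinate, I apply Lemma~\ref{lem:lemma_1} with $i=j=0$, $s_1=s$ and $s_2=z$, which produces integers $m,n$ and elements $x,y\in\gsigmab$ satisfying~(\ref{eq:lema_1}). With this choice one computes the auxiliary parameters explicitly, for instance $a_2=-4s$, $a_1=-2\delta_{n+1}r_1$, $b_2=2z-2n$, $b_1=\varepsilon_n r_2-2\delta_{n+1}m$ and $g=r_1+m-2\delta_{n+1}r_1$; in particular every first-index shift occurring in~(\ref{eq:lema_1}) is even.

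Next I linearise~(\ref{eq:lema_1}) by pushing it into a group ring. Consider the additive homomorphism $\Psi\colon\gsigmab\to\z[t^{\pm1},u^{\pm1}]$ defined by $B_{k,l}\mapsto t^{k}u^{l}$. The crucial observation is that, by~(\ref{eq:homo_gsigmab_theta})--(\ref{eq:homo_gsigmab_cpq}), each of $\theta(\cdot,n')_\ab$, $\rho_\ab$ and $(c_{p,q})_\ab$ preserves the parity of the first index $k$, and since all the shifts are even the even- and odd-first-index summands of $\gsigmab$ decouple. On each summand $\Psi$ intertwines the relevant operators with very simple maps: $(c_{p,q})_\ab$ becomes multiplication by a monomial $t^{p}u^{\pm q}$, $\theta(\cdot,n')_\ab$ becomes $\varepsilon_{n'}$ times the substitution $u\mapsto u^{\varepsilon_{n'}}$ (with an extra $u^{-2m'}$ factor on the odd block), and $\rho_\ab$ becomes, up to sign, the substitution $t\mapsto t^{-1}$ (together with $u\mapsto u^{-1}$ on the even block). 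Applying $\Psi$ to a fixed parity block therefore turns~(\ref{eq:lema_1}) into an equation of the shape $A\,X+\varepsilon_n\iota(X)+(t^{a_2}-1)\,Y+C=0$, where $X=\Psi(x)$, $Y=\Psi(y)$, $A$ is an explicit monomial, $\iota$ is an explicit involution, and $C=\Psi(\text{constant part})$ is read off from the closed forms for $\widetilde{T},\widetilde{O},\widetilde{J},\widetilde{Q}$ in Proposition~\ref{prop:gsigma} and Proposition~\ref{prop:element_q}.

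I then eliminate the unknowns. When $s\neq0$ I reduce modulo $t^{a_2}-1=t^{-4s}-1$, which annihilates the $Y$-term and replaces $\z[t^{\pm1},u^{\pm1}]$ by $\z[\z_{4|s|}][u^{\pm1}]$; when $s=0$ (possible only in case~(\ref{it:case_7I_2})) the $Y$-term is already absent and I work in $\z[t^{\pm1},u^{\pm1}]$ directly. In either case I am left with $A\,X+\varepsilon_n\iota(X)=-C$. Pairing this with its image under $\iota$ gives a two-by-two linear system in $X$ and $\iota(X)$; when the system degenerates ($A\,\iota(A)=1$) its solvability, not merely its consistency, must hold for a genuine solution $x$ to exist, and expanding in the monomial basis and reading off the coefficient of a monomial fixed by $\iota$ collapses the equation to a relation $2\cdot(\text{integer})=(\text{odd integer})$, which is impossible. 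For example, in the degenerate case~(\ref{it:case_7I_3}) (where $r_1=r_2=z=0$ and $s\neq0$) the even block yields exactly $2c_{-m}=1$ at the $u^{m}$-mode. In the non-degenerate configurations the analogous odd-versus-even clash is extracted from the complementary parity block or from a second $\iota$-fixed mode.

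The main obstacle is organisational rather than conceptual: one must carry out the explicit computation of $C$ on both parity blocks for every combination of signs and parities of $m,n,s$, and in each configuration locate the $\iota$-fixed Fourier mode whose coefficient is odd. Here the three hypotheses are precisely what guarantee such a mode. In~(\ref{it:case_7I_1}) the condition $r_2 s\neq0$ shifts the supports of the $\widetilde{O}$- and $\widetilde{J}$-contributions so that an $\iota$-fixed coefficient is forced to be odd; in~(\ref{it:case_7I_2}) the positivity $r_1>0$ makes the $\widetilde{T}$-contribution $a_1\varepsilon_n=-2\delta_{n+1}r_1$ genuinely present, and the parity of $r_2$ supplies the clash; and in~(\ref{it:case_7I_3}) the computation above applies. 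A handful of boundary configurations (notably $m=0$, $n$ odd, or $s<0$) require separate but entirely routine checks, and Corollary~\ref{cor:reduction_tau_2} may be used to reduce the dependence on $z$ by working modulo the relevant period.
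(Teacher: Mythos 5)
Your opening moves coincide with the paper's: assume the Borsuk--Ulam property fails, invoke Lemma~\ref{lem:lemma_1} with $i=j=0$, $s_1=s$, $s_2=z$, and regard~(\ref{eq:lema_1}) as a linear equation over the free abelian group $\gsigmab$; your map $\Psi$ is just a re-coordinatization of $\gsigmab$ as $\z[t^{\pm1},u^{\pm1}]$, and the paper's functionals are exactly compositions of such a $\Psi$ with reduction to a quotient ring and coefficient extraction, so the framework itself is sound. The first genuine gap is your mechanism for eliminating $y$. With $i=j=0$, the operator acting on $y$ in~(\ref{eq:lema_1}) sends $B_{k,l}$ to $B_{k-4s,\,l-2\delta_{n+k+1}r_1}-B_{k,l}$: on the parity block $k\equiv n \pmod 2$ it is the shift by $(-4s,-2r_1)$ minus the identity, so reducing modulo $t^{-4s}-1$ leaves $(u^{-2r_1}-1)\,Y$, which is \emph{not} zero when $r_1>0$; likewise the $Y$-term is not ``already absent'' when $s=0$ but $r_1>0$. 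This failure occurs precisely in the cases where $r_1$ may be positive, namely~(\ref{it:case_7I_1}) and~(\ref{it:case_7I_2}). The repair (reduce modulo $t^{-4s}u^{-2r_1}-1$ on that block, which is what the paper's case~(\ref{it:case_7I_2}) functional $\xi_2$ implicitly does) changes which monomials are $\iota$-fixed, so your subsequent mode analysis cannot be carried over unchanged.

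The second, more serious gap is that the contradiction is only asserted, and the asserted form --- always a clash $2\cdot(\text{integer})=(\text{odd integer})$ --- cannot be right for case~(\ref{it:case_7I_1}). There your system is non-degenerate: on the even block one computes $A\,\iota(A)=u^{2r_2}\neq 1$, and elimination gives $(u^{2r_2}-1)X=-\iota(A)C+\varepsilon_n\iota(C)$, so the obstruction is a divisibility or integrality statement, not a parity one. Indeed, the paper's contradiction in case~(\ref{it:case_7I_1}) is $-2r_2s=0$ in $\z$, obtained from the $\z$-valued functional $B_{k,l}\mapsto\delta_{k+n}$, i.e.\ evaluation at $t=u=1$ on the block $k\not\equiv n\pmod 2$, where both unknown terms die for the integral reason $1+\varepsilon_{k+n}=0$ (and the $y$-operator dies there being a difference of shifts, independently of $r_1$). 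The quantity $2r_2s$ is always even, so when for instance $r_2$ is even and $s\neq 0$ --- an instance of~(\ref{it:case_7I_1}) --- no $\ztwo$-valued coefficient extraction can detect it; your ``$\iota$-fixed mode with odd coefficient'' simply does not exist in that configuration. Only your case~(\ref{it:case_7I_3}) computation (which, modulo the block and parity bookkeeping you defer for $n$ odd, does reproduce the paper's $\xi_3$ argument) is on solid ground; cases~(\ref{it:case_7I_1}) and~(\ref{it:case_7I_2}) remain unproved, and~(\ref{it:case_7I_1}) requires an integral rather than a mod-$2$ argument, which is the key missing idea.
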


\begin{proposition}\label{prop:tau_2_case_6I}
For $n \in \z$, let $\omega_n=1$ (resp.\ $\omega_n=0$) if $n=0$ (resp.\ if  $n \neq 0$), and let $\delta_n$ as defined in Proposition~\ref{prop:p2_k2}.
%$\omega_n=\begin{cases} 1 & \text{if $n=0$}\\ 0 & \text{if  $n \neq 0$}. \end{cases}$ 
Suppose that $\alpha_\#: \begin{cases}
	 (1,0) \longmapsto ((z +(1-z)\delta_{r_2})r_1, 2zs)\\
	 (0,1) \longmapsto (\omega_{zs}r_2 , 2z)
\end{cases}$
for some $r_1, r_2, s \in \z$  and $z \in \{ 0,1 \}$ such that $r_1 \geq 0$. Then $\alpha$ does not have the Borsuk-Ulam property with respect to $\tau_2$.
\end{proposition}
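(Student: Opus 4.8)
The plan is to show that each of the three families listed in Proposition~\ref{prop:tau_2_case_6I} \emph{fails} the Borsuk-Ulam property by exhibiting explicit elements $a,b \in P_2(\klein)$ satisfying conditions~(\ref{eq:algebra_i})--(\ref{eq:algebra_iii}) of Lemma~\ref{lem:algebra_tau_2}. This is the natural route because Lemma~\ref{lem:algebra_tau_2} reduces non-satisfaction of the Borsuk-Ulam property to the existence of such a pair, and the homomorphisms here are precisely the ``complementary'' cases to those treated in Proposition~\ref{prop:tau_2_case_7I}. I would first split into the cases $z=0$ and $z=1$, since the parameter $z$ controls the second coordinates of $\alpha_\#(1,0)$ and $\alpha_\#(0,1)$ and hence the parity data $\delta_{n_1},\delta_{n_2}$ that drives all the actions in Proposition~\ref{prop:p2_k2}. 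When $z=0$ the target homomorphism sends $(1,0)\mapsto(\delta_{r_2}r_1,0)$ and $(0,1)\mapsto(\omega_{0}r_2,0)=(r_2,0)$ (using $\omega_{zs}=\omega_0=1$), and when $z=1$ it sends $(1,0)\mapsto(r_1,2s)$ and $(0,1)\mapsto(\omega_s r_2,2)$, so $r_2$ only survives when $s=0$.

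The heart of the argument is the explicit construction of $a$ and $b$. Writing $a=(u^{a_1}v^{a_2}x; m_1,n_1)$ and $b=(u^{b_1}v^{b_2}y; m_2,n_2)$ as in~(\ref{eq:normformab}), conditions~(\ref{eq:algebra_ii}) and~(\ref{eq:algebra_iii}) fix $(m_1,n_1)=\alpha_\#(1,0)$ and, via $(p_1)_\#(b l_\sigma(b))=(m_2+\varepsilon_{n_2}m_2, 2n_2)$ expanded through Proposition~\ref{prop:p2_k2}, constrain $(m_2,n_2,b_1,b_2)$ so that the first and second coordinates match $\alpha_\#(0,1)$; in particular the second-coordinate condition forces $2n_2$ to equal the $v$-exponent sum, which is straightforward to solve. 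The genuinely delicate part is condition~(\ref{eq:algebra_i}), namely $ab l_\sigma(a)=b$, which by Lemma~\ref{lem:lem48ggl2} splits into the arithmetic relations~(\ref{eq:a1a2}) on $(a_1,a_2)$ together with the fixed-point equation~(\ref{eq:yexp}) for $y$ in $\gsigma$. The strategy is to choose $m_1,n_1,m_2,n_2$ so that the relations~(\ref{eq:a1a2}) are consistent (forcing $a_1,a_2$ even as the lemma guarantees) and then to \emph{solve} the free-group equation~(\ref{eq:yexp}) for $y$ given a convenient choice of $x$; the simplest viable attempt is to try $x=\id$, reducing~(\ref{eq:yexp}) to an explicit word equation in $y$, and to read off $y$ directly.

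I expect the main obstacle to be verifying that equation~(\ref{eq:yexp}), after substituting our chosen parameters and $x$, really admits a solution $y$ in the free group $\gsigma$ rather than merely in $\gsigmab$. The abelianised version~(\ref{eq:lema_1}) from Lemma~\ref{lem:lemma_1} gives only a \emph{necessary} condition, so it cannot by itself establish failure of the Borsuk-Ulam property; one must exhibit a genuine non-abelian solution. The cleanest way to handle this is to run the computation underlying Lemma~\ref{lem:lem48ggl2} in reverse: rather than solving for $y$, I would \emph{define} $y$ to be the specific word on the right-hand side of~(\ref{eq:yexp}) with $x$ and all the integer parameters inserted, and then verify that this $y$ is consistent — i.e.\ that the same $y$ appears on both sides — by exploiting that the recursive occurrence of $y$ on the right-hand side enters through $\theta(m_1,\delta_{n_1})(u^{b_1}v^{b_2}y)$, whose $\gsigma$-component can be isolated. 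For the degenerate subcases (such as $r_1=r_2=z=0$, $s\neq0$, where $a_1=a_2=0$ is impossible by~(\ref{eq:a1a2}) unless one chooses $m_2$ carefully) I would treat each separately, picking the smallest parameter values that make~(\ref{eq:a1a2}) solvable and then confirming the word equation by direct substitution using the action formulas of Proposition~\ref{prop:p2_k2}.

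Once $a$ and $b$ are produced and all three conditions of Lemma~\ref{lem:algebra_tau_2} are checked, the conclusion is immediate. A useful simplification throughout is Corollary~\ref{cor:reduction_tau_2}, which lets me replace $s_2$ by any congruent value mod~$4$; I would invoke it to normalise the second coordinate of $\alpha_\#(0,1)$ and thereby reduce the number of subcases requiring an explicit construction. The overall structure of the proof thus mirrors that of Proposition~\ref{prop:tau_2_case_14I}, differing only in the bookkeeping forced by the extra parameters $r_1,r_2$ and the indicator functions $\omega_{zs}$ and $\delta_{r_2}$.
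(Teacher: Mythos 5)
Your high-level strategy coincides with the paper's: both invoke Lemma~\ref{lem:algebra_tau_2} and aim to exhibit explicit pairs $a,b\in P_2(\klein)$, split according to $z$ and the parity of $r_2$. The genuine gap is in the step where you claim to actually produce such a pair. Equation~(\ref{eq:yexp}) is not a formula for $y$: the unknown $y$ occurs on its right-hand side, inside $\theta(m_{1},\delta_{n_{1}})(u^{b_{1}}v^{b_{2}}y)$. After you ``isolate'' that occurrence, what remains is a twisted fixed-point equation $y=W_{1}\,\theta(m_{1},\delta_{n_{1}})(y)\,W_{2}$ in the free group $\gsigma$, with $W_{1},W_{2}$ explicit words; defining $y$ to be the right-hand side is therefore circular, and the solvability of such an equation is exactly the content that must be proved. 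Indeed it can genuinely fail --- that is precisely how Propositions~\ref{prop:tau_2_case_23I}, \ref{prop:tau_2_case_5I} and~\ref{prop:tau_2_case_7I} establish the Borsuk--Ulam property, by showing that even the abelianised equation~(\ref{eq:lema_1}) has no solution. Your own (correct) observation that~(\ref{eq:lema_1}) is only a necessary condition cuts against you here: in $\gsigmab$ the equation is linear and comparatively tractable, but lifting a solution back to the free group $\gsigma$ is the entire difficulty, and your outline contains no mechanism for doing so.

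Concretely, the ansatz $x=\id$ already fails in the case $z=1$: the paper's witness there is $a=(v^{-2s}(u^{2r_1-1}v^{-1})^{2s}B^{-r_1};r_{1},2s)$, $b=(u^{-\omega_{s}r_{2}}B^{1-\omega_{s}};0,1)$, and for $r_{1}=s=1$ the normal form of $a$ is $(u^{0}v^{-4}O_{1,1};1,2)$, whose $\gsigma$-component $O_{1,1}$ is nontrivial; nothing in your plan would lead to this word, nor to a proof that some witness with $x=\id$ exists instead. For $z=0$ and $r_{2}$ odd, the paper does not solve normal-form equations at all: it sets $a_{1}=(u^{-2};1,0)$, $b_{1}=(u^{-1};0,0)$, $a=a_{1}^{r_{1}}$ and $b=(b_{1}\sigma)^{-r_{2}}\sigma^{-1}$ (which lies in $P_2(\klein)$ because $r_{2}$ is odd), verifies the single identity $a_{1}b_{1}l_\sigma(a_{1})=b_{1}$, and then iterates the resulting conjugation relation, using the oddness of $r_{2}$, to conclude $abl_\sigma(a)=b$. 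A further slip in your sketch would actively block this case: your formula $(p_1)_\#(bl_\sigma(b))=(m_2+\varepsilon_{n_2}m_2,2n_2)$ omits the contributions of $b_1$ and $b_2$; by~(\ref{eq:blsiga}) the correct value is $(m_2+\varepsilon_{n_2}(b_1+\varepsilon_{b_2}m_2),\,b_2+2n_2)$, and for odd $r_2$ the first coordinate can only equal $r_2$ because of the odd summand $b_1$ --- with your formula the required $b$ would appear not to exist. These explicit constructions are the mathematical substance of Proposition~\ref{prop:tau_2_case_6I}; your proposal correctly identifies the surrounding framework but defers precisely this step, and the method offered in its place would not go through.
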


We will prove Propositions~\ref{prop:tau_2_case_23I}--\ref{prop:tau_2_case_6I} presently. Assuming for the moment that they hold, we first give the proof of Theorem~\ref{th:BORSUK_TAU_2}.

\begin{proof}[Proof of Theorem~\ref{th:BORSUK_TAU_2}.]
Applying~\cite[Theorem 7(b)]{GonGuaLaa1},~\cite[Proposition~2.2]{GonGuaLaa2} and Corollary~\ref{cor:reduction_tau_2}, we claim that Theorem~\ref{th:BORSUK_TAU_2} follows from 
Propositions~\ref{prop:tau_2_case_23I}--\ref{prop:tau_2_case_6I}. To see this,
%\comj{Perhaps justify the fact that these propositions suffice, in a similar manner to~\cite[Remark~3.4]{GonGuaLaa2}?}
%{\bf !!!DV!!! see the explanation}. 
%whose proofs occupy the rest of the section. } 
%Let $\alpha \in [ \torus, \ast ; \klein, \ast]$ be a pointed homotopy class. 
%\begin{remark}\label{rem:summary}%\comj{This remark added:}
let $\alpha \in [ \torus, * ; \klein,*]$ be a pointed homotopy class, and let $\alpha_\# : \pi_1(\torus)\to \pi_1(\klein)$ be the homomorphism described in Remark~\ref{rem:homotopy_pi1},
%~\cite[Theorem~4]{GonGuaLaa}, 
which is of one of the types~1--4 given in the statement of Proposition~\ref{prop:set_homotopy}. 
\begin{enumerate}[(a)]
\item\label{it:summarya} Suppose that $\alpha_\#$ is of Type~1, 2 or~3, and let $i\in \{0,1\}$, $s_{1}$ and $s_{2}$ be the integers that appear in the description of $\alpha_\#$ in Proposition~\ref{prop:set_homotopy}. By~\cite[Proposition~2.2]{GonGuaLaa2} and Corollary~\ref{cor:reduction_tau_2}, it suffices to 
consider the cases where $i=0$, and 
%\textcolor{yellow}{$0\leq 2s_2\leq 3$ for Type~1, and $0\leq 2s_2+1\leq 3$ for Types~2 and~3} 
$s_2 \in \{0,1\}$. In order to apply Propositions~\ref{prop:tau_2_case_23I}--\ref{prop:tau_2_case_14I}, we may subdivide these cases as follows. 
%Under the restrictions above for $\alpha$ the solution of our problem is given by:
%By Proposition~\ref{prop:reduced_cases} and Corollary~\ref{cor:reduction_tau_1}, $\alpha$ has the Borsuk-Ulam property with respect to $\tau_1$ if and only if $\alpha'$ does, where $\alpha'\in [ \torus, * ; \klein,*]$ satisfies:
\begin{enumerate}[(i)]
\item\label{it:summaryai} If $\alpha_{\#}$ is of Type~1 and $s_{2}=0$ (resp.\ $s_2=1$) then $\alpha$ has (resp.\ does not have) the Borsuk-Ulam property with respect to $\tau_2$ by Proposition~\ref{prop:tau_2_case_23I} (resp.\ Proposition~\ref{prop:tau_2_case_14I}).

%the result is given by Proposition \ref{prop:tau_2_case_14I}.
%%  $\alpha'_{\#}(1,0)=(0, 2s_{1}+1 \bmod{4})$ and $\alpha'_{\#}(0,1)=(0, j)$ if $\alpha_\#$ is of Type~1 (in which case $j=0$), or is of Type~2 (in which case $j=1$).
%\item\label{it:summaryaii} If $\alpha_{\#}$ is of Type~1  and $s_2=0$ then the result is given by Proposition \ref{prop:tau_2_case_23I}.
%$\alpha'_{\#}(1,0)=(0, 2s_{1} \bmod{4})$ and $\alpha'_{\#}(0,1)=(0, 1)$ if $\alpha_\#$ is of Type~3.
\item\label{it:summaryaiii} If $\alpha_{\#}$ is of Type~2 then $\alpha$ has the Borsuk-Ulam property with respect to $\tau_2$ by Proposition~\ref{prop:tau_2_case_23I}.

\item\label{it:summaryaiv}  If $\alpha_{\#}$ is of Type~3 then $\alpha$ has the Borsuk-Ulam property with respect to $\tau_2$ if and only if $s_{1}\neq 0$ by Propositions~\ref{prop:tau_2_case_14I} and~\ref{prop:tau_2_case_5I}.
%  and $s_1=0$ then the result is given by Proposition \ref{prop:tau_2_case_14I}.
%\item\label{it:summaryav}  If $\alpha_{\#}$ is of Type~3  and $s_1\neq 0$ \comj{This was $s_1=0$} then the result is given by Proposition \ref{prop:tau_2_case_7I}.
\end{enumerate}
%So for each of Types~1,2 and~3, there are two cases to consider, $s_{1}=0$, and $s_{1}=1$. 

\item\label{it:summaryb} Suppose that $\alpha_\#$ is of Type~4, and let $r_{1},r_{2},s_{1}$ and $s_{2}$ be the integers that appear in the description of $\alpha_\#$ in Proposition~\ref{prop:set_homotopy}, where $r_{1}\geq 0$.  By Corollary~\ref{cor:reduction_tau_2}, it suffices to consider the cases where 
%\comj{Is this correct now?} 
%\textcolor{yellow}{$i=0$}{\bf !!! DV !!! There is not $i$ in the Type 4} and  \textcolor{yellow}{$0\leq 2s_2\leq 3$} 
$s_2 \in \{0,1\}$. In order to apply Propositions~\ref{prop:tau_2_case_7I}--\ref{prop:tau_2_case_6I}, we may subdivide these cases as follows. 
%,   $0\leq 2s_2+1\leq 3$ for Types 1, 2, 3, respectively.

\begin{enumerate}[(i)]
\item Suppose that $s_1 r_2\neq 0$. Then 
%\textcolor{yellow}{$s_2=1$, and}{\bf !!! DV !!! The result is good without restriction on $s_2$} \comj{OK, but it is just to make clear the value of $s_{2}$ that we are considering.}{\bf !!! DV !!! Once we do not mention $s_2$ in the statement ``Then $\alpha$ has the Borsuk...'' it is assumed that the values of $s_2$ are in $\{0,1\}$ as mentioned above.} 
$\alpha$ has the Borsuk-Ulam property with respect to $\tau_2$ by Proposition~\ref{prop:tau_2_case_7I}(\ref{it:case_7I_1}).

\item Suppose that $s_1\neq 0$ and $r_2=0$. If $s_{2}=1$ then $\alpha$ does not have the Borsuk-Ulam property with respect to $\tau_2$ by Proposition~\ref{prop:tau_2_case_6I}. So suppose that $s_{2}=0$. 
%\textcolor{yellow}{Then $z=s_{2}=0$.}{\bf !!! DV !!! Since the cases (i) and (iii) we do not explain the relation between notations of the Propositions 3.4 e 3.5 we suggest to remove.} 
If $r_{1}>0$ (resp.\ $r_{1}=0$) then $\alpha$ has the Borsuk-Ulam property with respect to $\tau_2$ by Proposition~\ref{prop:tau_2_case_7I}(\ref{it:case_7I_2}) (resp.\ Proposition~\ref{prop:tau_2_case_7I}(\ref{it:case_7I_3})). 

\item Finally, suppose that $s_{1}=0$. If $s_{2}=0$, $r_{2}$ is even and $r_{1}>0$ then $\alpha$ has the Borsuk-Ulam property with respect to $\tau_2$ by Proposition~\ref{prop:tau_2_case_7I}(\ref{it:case_7I_2}). In the remaining cases (either $s_{2}=1$, or $s_{2}=0$, and either $r_{2}$ is odd, or else $r_{2}$ is even and $r_{1}=0$), $\alpha$ does not have the Borsuk-Ulam property with respect to $\tau_2$ by Proposition~\ref{prop:tau_2_case_6I}.\qedhere
\end{enumerate}
\end{enumerate}
\end{proof}

%	{\bf !!!DV!!!10Mar\c co Toda proposi\c c\~ao tem duas provas. A que voce propos, onde fizemos algumas/corre\c c\~oes. Uma outra que usa a nova nota\c c\~ao. 
%	As vezes a nova prova \'e praticamente a mesma que a sua, so adptando a nota\c c\~ao e as vezes n\~ao}  
%	\begin{proposition}\label{prop:tau_2_case_7}
%If ${\allowdisplaybreaks
%	\alpha_\#: \begin{cases}
%	 (1,0) \longmapsto (r_1 , 2s) \\
%	 (0,1) \longmapsto (r_2 , 2j)
%	\end{cases}
%}$	for some $r_1, r_2, s \in \z$ and $j \in \{ 0,1 \}$ such that $r_1 \geq 0$, $r_2 \neq 0$ and $s \neq 0$ then $\alpha$ has the Borsuk-Ulam property with respect $\tau_2$.
%	\end{proposition}
%
%	\begin{proposition}\label{prop:tau_2_case_8}
%If ${\allowdisplaybreaks
%	\alpha_\#: \begin{cases}
%	 (1,0) \longmapsto (r_1 , 2s) \\
%	 (0,1) \longmapsto (\omega_s \delta_{r_2+1} r_2 , 0)
%	\end{cases}
%}$	for some $r_1 , r_2 , s \in \z$ such that $r_1 > 0$ then $\alpha$ has the Borsuk-Ulam property with respect $\tau_2$.
%	\end{proposition}
	
%	\newpage

The rest of this section is devoted to proving Propositions~\ref{prop:tau_2_case_23I}--\ref{prop:tau_2_case_6I}.

\begin{proof}[Proof of Proposition~\ref{prop:tau_2_case_23I}.]
Suppose on the contrary that $\alpha$ does not have the Borsuk-Ulam property with respect to $\tau_2$. Applying Lemma~\ref{lem:lemma_1} with $s_1 = s$, $i =1$, $s_2= zw$ and $j = w$, there exist $m,n \in \z$ and $x,y \in \gsigmab$ that satisfy the following equation in~$\gsigmab$:
\begin{multline}\label{eq:prop23}
(c_{2n-(2z+1)w-4s-2,2m\varepsilon_{w}\delta_{n+w}})_\ab (x)+\theta(m,\delta_{n+1})_\ab (\rho_\ab (x))+(c_{-4s-2,2m})_\ab (\theta(0,1)_\ab(y))-y\\
+(c_{-4s-2,0})_\ab(\widetilde{T}_{2m,\delta_{n+1}})
+(c_{2n-(2z+1)w-4s-2,0})_\ab (\widetilde{O}_{2s+1,2m\varepsilon_{w}\delta_{n+w}}-\delta_{w+1} \widetilde{O}_{zw-n,2m}+\delta_w \widetilde{Q}_{-2m,zw-n}) \\
+(c_{-4s-3,-2m})_\ab(\widetilde{I}_{2n-(2z+1)w}) 
+(c_{0,\delta_{n}})_\ab (\widetilde{J}_{-2s -1,1-2m})  +\widetilde{O}_{-2s-1,\delta_{n}}\\ +(\delta_{n}-m)(B_{0,0}+B_{-4s-2,2m})-B_{2n-(2z+1)w-4s-2,2m\varepsilon_{w}\delta_{n+w}}= 0,
\end{multline}
where we have also used the fact that $\varepsilon_{n}+\delta_{w+n+1} \varepsilon_{w+1} =-\varepsilon_{w}\delta_{n+w}$. Let $\map{\xi}{\gsigmab}[\ztwo=\{ \overline{0} , \overline{1} \}]$ be the homomorphism defined on the basis $\{ B_{k,l} \}_{k,l \in \z}$ of $\gsigmab$ by $\xi(B_{k,l})=\delta_{w}\overline{1}+\delta_{w+1}\overline{k}$ for all $k,l \in \z$. To prove the result, we will arrive at a contradiction by computing the image by $\xi$ of~(\ref{eq:prop23}). 
% \comj{This corresponds to the definitions of $\xi_{1}$ and $\xi_{2}$ in the proofs of Propositions~\ref{prop:tau_2_case_2}(3.2) and~\ref{prop:tau_2_case_3}(3.3)}. 
Clearly $\xi(B_{k,l})= \xi(B_{k,0})$ for all $k,l\in \z$. Using Proposition~\ref{prop:gsigma}, we may check that $\xi \circ \theta(m,n)_\ab=\xi \circ \rho_\ab= \xi$ for all $m,n\in \z$, and that $\xi \circ (c_{p,q})_\ab=\xi$ if %either $j$ is odd, or if $j$ and $p$ are both even {\bf !!! V !!! No caso $p$ par não é necessário que $j$ seja par também, ou seja, podemos escrever ``if $j$ is odd or $p$ is even''.}. 
$w$ is odd or $p$ is even, from which it follows that the image of the first line of~(\ref{eq:prop23}) by $\xi$ is equal to $\overline{0}$. %Note also that for all $k,l\in\z$ {\bf !!! V !!! Nas contas abaixo, temos que tomar cuidado com as fórmulas quando alguns índices são zero. Por exemplo, $\widetilde{O}_{k,l}=0$ se $k=0$ ou $l=0$. A fórmula com somatório só vale se $k \neq 0$ and $l \neq 0$. Neste específico caso, podemos remover todos os ``$\sigma$'' que aparecem antes dos somat\'orios, pois $\xi$ toma valores em $\ztwo$. Comentário análogo para as outras contas semelhantes} \comj{clash of notation for $j$}:
Note that if $k=0$ or $l=0$ then $\xi(\widetilde{O}_{k,l})=\xi(0)=\overline{0}$, while if $k,l\in \z \setminus \{0\}$ then:
\begin{align*}
\xi(\widetilde{O}_{k,l})&=  \sum_{i=1}^{\lvert k \rvert} \sum_{j=1}^{\lvert l\rvert} \xi\bigl(
B_{\sigma_k (2i-1 ),-\sigma_l j+(\sigma_l -1 )/2} 
- B_{ \sigma_k (2i-1) -1 , \sigma_l j-(1+\sigma_l)/2}
\bigr)=  \lvert k \rvert \lvert l \rvert \left( (\delta_{w} \overline{1}+\delta_{w+1} \overline{1}) +\delta_{w} \overline{1}\right)\\
&=\lvert k \rvert \lvert l \rvert \delta_{w+1} \overline{1}.
\end{align*}
In particular, if $w$ is odd, or if either $k$ or $l$ is even then $\xi(\widetilde{O}_{k,l})=\overline{0}$. Taking the image of~(\ref{eq:prop23}) and using the above computations involving compositions with $\xi$ and the fact that $\delta_{w+1}\overline{w}=\overline{0}$, we obtain:
\begin{equation}\label{eq:prop23a}
\xi(\widetilde{T}_{2m,\delta_{n+1}})+ \delta_{w}\xi(\widetilde{Q}_{-2m,zw-n}) +\xi\circ(c_{-4s-3,-2m})_\ab(\widetilde{I}_{2n-(2z+1)w}) 
+\xi (\widetilde{J}_{-2s -1,1-2m})  +\overline{\delta_{w+1}\delta_{n}}+\overline{\delta_{w}}= \overline{0}.
\end{equation}
If $m = 0$ then $\xi(\widetilde{T}_{2m,\delta_{n+1}}) = \xi(\widetilde{Q}_{-2m,zw-n}) = \xi(0) = \overline{0}$, while if $m\neq 0$ then:
%\comj{I added some calculations here:}
%
\begin{align}
\xi(\widetilde{T}_{2m,\delta_{n+1}})&= \sum_{i=1}^{\lvert 2m \rvert} \xi(B_{0,\sigma_{m}(i+(\sigma_{m}(1-2\delta_{n+1})-1)/2)})=\sum_{i=1}^{\lvert 2m \rvert} \xi(B_{0,0})=\sum_{i=1}^{\lvert 2m \rvert} \overline{\delta_{w}}=\overline{0}\label{eq:xit2m}\\
\xi(\widetilde{Q}_{-2m,zw-n}) &=\xi(\widetilde{O}_{zw-n,-2m})+\sum_{i=1}^{\lvert 2m\rvert} \xi(B_{2(zw-n),\sigma_{-m} i-(1+\sigma_{-m})/2})=
\sum_{i=1}^{\lvert 2m\rvert} \overline{\delta_{w}}=\overline{0}.\label{eq:qm2m}
\end{align}
Now:
\begin{align}
\xi (\widetilde{J}_{-2s -1,1-2m})&= \sum_{i=1}^{\lvert 2s +1\rvert} \sum_{j=1}^{\lvert 2m-1 \rvert} \xi(B_{-\sigma_{2s+1}(2i-1), \sigma_{1-2m}(j-(1+\sigma_{1-2m})/2)})=\sum_{i=1}^{\lvert 2s +1\rvert} \sum_{j=1}^{\lvert 2m-1 \rvert} \xi(B_{-\sigma_{2s+1}(2i-1), 0})\notag\\
&=\sum_{i=1}^{\lvert 2s +1\rvert} \sum_{j=1}^{\lvert 2m-1 \rvert}  (\overline{\delta_{w}}+\overline{\delta_{w+1}})= \sum_{i=1}^{\lvert 2s +1\rvert} \sum_{j=1}^{\lvert 2m-1 \rvert} \overline{1}=\overline{1}.\label{eq:j2s1}
\end{align}
It follows from~(\ref{eq:prop23a})--(\ref{eq:j2s1}) that:
\begin{equation}\label{eq:prop23b}
\xi\circ(c_{-4s-3,-2m})_\ab(\widetilde{I}_{2n-(2z+1)w}) 
+\overline{1}  +\overline{\delta_{w+1}\delta_{n}}+\overline{\delta_{w}}= \overline{0}.
\end{equation}
%If $j=0$ then $s_{2}=0$, and:
Assume first that $w=0$. If $n=0$ then~(\ref{eq:prop23b}) gives rise to a contradiction. So suppose that $n \neq 0$. Then: 
\begin{align*}
\xi\circ(c_{-4s-3,-2m})_\ab(\widetilde{I}_{2n-(2z+1)w})&= \xi\circ(c_{-4s-3,-2m})_\ab(\widetilde{I}_{2n})=\xi\circ(c_{-4s-3,-2m})_\ab \Biggl(\sum_{i=1}^{2\lvert n\rvert} B_{\sigma_{n}  i+(1-\sigma_{n} )/2 ,0}\Biggr)\\
&= \sum_{i=1}^{2\lvert n\rvert} \xi(B_{\sigma_{n}  i+(1-\sigma_{n} )/2 -4s-3,0})=\overline{\delta_{n}}.
\end{align*}
The last equality follows from the fact that as $i$ varies between $1$ and $2\lvert n\rvert$, $\sigma_{n}  i+(1-\sigma_{n} )/2 -4s-3$ runs over $2\lvert n\rvert$ consecutive integers of which exactly $\lvert n\rvert$ are odd. Equation~(\ref{eq:prop23b}) then yields a contradiction. Finally if $w=1$ then:
\begin{align*}
\xi\circ(c_{-4s-3,-2m})_\ab(\widetilde{I}_{2n-(2z+1)w}) &=\xi\circ(c_{-4s-3,-2m})_\ab(\widetilde{I}_{2n-2z-1})\\
&=  \sum_{i=1}^{\lvert 2n-2z-1\rvert} \xi(B_{\sigma_{2n-2z-1}  i+(1-\sigma_{2n-2z-1})/2-4s-3 ,0})=\overline{1},
\end{align*}
using the definition of $\xi$. Once more, (\ref{eq:prop23b}) gives rises to a contradiction. We conclude that $\alpha$ has the Borsuk-Ulam property with respect to $\tau_2$.
\end{proof}

\begin{proof}[Proof of Proposition~\ref{prop:tau_2_case_14I}]
Let $a = (v^{(4s+2)(w-1)}x;0,(2s+1)(1-w))$ and $b=(v^w; 0, w(z-1)+1)$, where $x = (v^{2s+2} (Bv^2)^{-s-1})^{1-w} \in \gsigma$. Note that $a,b\in P_2(\klein)$. To prove the result, it suffices to show that these elements satisfy  Lemma~\ref{lem:algebra_tau_2}(\ref{eq:algebra_i})--(\ref{eq:algebra_iii}). Clearly $(p_1)_\# (a)= \alpha_\# (1,0)$, so Lemma~\ref{lem:algebra_tau_2}(\ref{eq:algebra_ii}) holds. With the notation of~(\ref{eq:normformab}), we have $a_{1}=m_{1}=b_{1}=m_{2}=0$, $y=\id$ and $a_{2}$ is even. Taking $a=b$ in~(\ref{eq:blsiga}), we may check that  $(p_1)_\# (bl_{\sigma}(b))=(0,b_{2}+2n_{2})$, and this may be seen to be equal to $\alpha_\# (0,1)$, so Lemma~\ref{lem:algebra_tau_2}(\ref{eq:algebra_iii}) is satisfied too. It remains to show that Lemma~\ref{lem:algebra_tau_2}(\ref{eq:algebra_i}) holds. As in~\cite[p.~534]{GonGuaLaa2}, let $\map{p_{F}}{P_2(\klein)}[F(u,v)]$ be defined by $p_F(w; m,n)=w$ for all $w\in F(u,v)$ and $(m,n)\in \z \rtimes \z$. Then $w=(p_{F}(w); (p_{1})_{\#}(w))$, and to prove that $abl_\sigma(a)=b$, it thus suffices to show that $(p_1)_\# (abl_{\sigma}(a))= (p_1)_\# (b)$ and that $p_{F}(abl_{\sigma}(a))= p_{F}(b)$. By~(\ref{eq:a_b_lsigma_a1}), we have:
\begin{equation}\label{eq:ablsigared}
abl_\sigma(a)= (v^{a_2} x \theta(0,\delta_{n_1})(v^{b_2}) \theta(0,\delta_{n_1+n_2})((uv)^{-a_2}  \rho(x)B^{\delta_{n_{1}}});  0, 2n_1+n_2+a_2).
\end{equation}
One may check easily that $(p_1)_\# (abl_{\sigma}(a))= (p_1)_\# (b)$. Hence it remains to show that $p_{F}(abl_{\sigma}(a))= p_{F}(b)$. By~(\ref{eq:ablsigared}), if $w=1$ then $x=\id$ and $p_F(abl_{\sigma}(a))=v=p_F(b)$. So suppose that $w=0$. Then by~(\ref{eq:ablsigared}) and the fact that $Bv^{2}=(uv)^{2}$, we have:
\begin{equation}\label{eq:ablsigared2}
p_F(abl_{\sigma}(a)) =v^{-4s-2} v^{2s+2}(Bv^{2})^{-s-1} (uv)^{4s+2}  \rho(x) B= v^{-2s} (uv)^{2s} \rho(x) B. 
\end{equation}
Recall that $\rho(x)=p_{F}(l_{\sigma}(x);0,0)$ by~\cite[equation~(3.2)]{GonGuaLaa2}. 
Now $x=v^{2s+2}(Bv^{2})^{-s-1}$, and using Proposition~\ref{prop:p2_k2}, we obtain:
\begin{align*}
l_{\sigma}(x;0,0)&= l_{\sigma}(v^{2s+2}(Bv^{2})^{-s-1};0,0)=((uv)^{-2s-2};0,2s+2)(B(uv)^{-2};0,2)^{-s-1}\\
&=((uv)^{-2s-2}(B(uv)^{-2})^{-s-1};0,0).
\end{align*}
So $\rho(x)=(uv)^{-2s-2}(B(uv)^{-2})^{-s-1}$. Since $(uv)^{2}=Bv^{2}$, we see from~(\ref{eq:ablsigared2}) that:
\begin{align*}
p_F(abl_{\sigma}(a))&= v^{-2s} (uv)^{-2}(B(uv)^{-2})^{-s-1} B=v^{-2s-2} B^{-1}
(Bv^{-2}B^{-1})^{-s-1}B=\id=p_F(b).
\end{align*}
It follows that $abl_{\sigma}(a)=b$, so Lemma~\ref{lem:algebra_tau_2}(\ref{eq:algebra_i}) holds, and this completes the proof of the proposition.
\end{proof}

\begin{proof}[Proof of Proposition~\ref{prop:tau_2_case_5I}]
We argue by contradiction. Suppose that $\alpha$ does not have the Borsuk-Ulam property with respect to $\tau_2$. By Lemma~\ref{lem:lemma_1}, there exist $m,n \in \z$ and $x,y \in \gsigmab$ that satisfy the following equation in~$\gsigmab$:
\begin{multline}\label{eq:tau_2_case_5}
(c_{2n-2z-4s-1,-2\delta_n m})_\ab (x)+\theta(m,\delta_n)_\ab(\rho_\ab(x))+(c_{-4s,0})_\ab(y)-y 
+ (c_{2n-2z-4s-1,0})_\ab(\widetilde{O}_{2s,-2\delta_n m}) \\
+ (c_{0,\delta_{n+1}})_\ab  (\widetilde{J}_{-2s,1-2m})+\widetilde{O}_{-2s,\delta_{n+1}}
+(m-\delta_n)(B_{-4s,0}-B_{0,0}) =0.
\end{multline}
Let $\map{\xi}{\gsigmab}[\ztwo]$ be the homomorphism defined on the basis $\{ B_{k,l} \}_{k,l \in \z}$ of $\gsigmab$ by:
\begin{equation}\label{eq:defxi3}
\xi (B_{k,l})=\begin{cases}
\overline{1} & \text{if $k \equiv 0 \bmod{4s}$ or $k \equiv 2n-2z-1 \bmod{4s}$}\\
\overline{0} & \text{otherwise,}
\end{cases}
\end{equation}
which implies that:
\begin{equation}\label{eq:bkts}
\text{$\xi(B_{k,l})=\xi(B_{k+4ts,0})$ for all $k,l,t,s\in\z$.}
\end{equation}
Now if $s>0$ and $t\in \z$ then:
\begin{align}
\bigl\{ -\sigma_{s}(2i-1)+t + 4\lvert s \rvert \; \bigl| \; i=1,\ldots, 2\lvert s \rvert\bigr\}&= \bigl\{ (2i-1)+t  \; \bigl| \; i=1,\ldots, 2\lvert s \rvert\bigr\}\notag\\
&= \bigl\{ -\sigma_{-s}(2i-1)+t  \; \bigl| \; i=1,\ldots, 2\lvert s \rvert\bigr\}.\label{eq:sigs4s}
\end{align}
It follows from~(\ref{eq:bkts}) and~(\ref{eq:sigs4s}) that for all $s,t\in \z$, $s\neq 0$:
\begin{equation}\label{eq:xibt0}
\xi\left(\sum_{i=1}^{2\lvert s \rvert} B_{t-\sigma_{s}(2i-1),0}\right)= \xi\left(\sum_{i=1}^{2\lvert s \rvert} B_{2i-1+t,0}\right).
\end{equation}
We claim that the expression on the right-hand side of~(\ref{eq:xibt0}) is equal to $\overline{1}$. 
%
%\begin{equation}\label{eq:xi3sum1}
%\xi\Biggl(\sum_{i=1}^{2\lvert s \rvert} B_{t-\sigma_{s}(2i-1),0}\Biggr)=\overline{1}.
%\end{equation}
%
To prove the claim, 
%suppose first that $s<0$. 
note that as $i$ varies between $1$ and $2\lvert s \rvert$, the index $2i-1+t$ takes successively the values $1+t, 3+t, \ldots, 4\lvert s \rvert-1+t$. In particular, if $t$ is even (resp.\ odd), $2i-1+t$ is never congruent to $0\bmod{4s}$ (resp.\ to $2n-2z-1 \bmod{4s}$), and it is congruent to $2n-2z-1 \bmod{4s}$ (resp.\ to $0\bmod{4s}$) for precisely one value of $i$, which using~(\ref{eq:defxi3}) proves the claim. It follows from~(\ref{eq:xibt0}) that for all $s,t\in \z$, $s\neq 0$:
\begin{equation}\label{eq:xi3sum1}
\xi\Biggl(\sum_{i=1}^{2\lvert s \rvert} B_{t-\sigma_{s}(2i-1),0}\Biggr)=\overline{1}.
\end{equation}
% the equation~(\ref{eq:xi3sum1}) is true for $s<0$. The case $s>0$ is similar. 
Let $p,q\in \z$. Applying~(\ref{eq:homo_gsigmab_cpq}) and~(\ref{eq:xi3sum1}), if $u=0$ then $\xi \circ (c_{p,q})_{\ab}(\widetilde{O}_{2s,u})=\overline{0}$, while if $u\neq 0$ we obtain:
\begin{equation*}
\xi \circ (c_{p,q})_{\ab}(\widetilde{O}_{2s,u})= \xi\left( \sum_{i=1}^{2\lvert s\rvert} \sum_{j=1}^{\lvert u \rvert} B_{\sigma_{2s}(2i-1)+p,0}-B_{\sigma_{2s}(2i-1)+p-1,0}\right)= \lvert u \rvert(\overline{1}+\overline{1})=\overline{0}.
\end{equation*}
In a similar manner, one sees that $\xi(\widetilde{J}_{-2s,-2m+1})=\overline{1}$. Taking the image of~(\ref{eq:tau_2_case_5}) by $\xi$, we conclude that:
\begin{equation}\label{eq:xi3red}
\xi((c_{2n-2z-4s-1,-2\delta_n m})_\ab (x)+\theta(m,\delta_n)_\ab(\rho_\ab(x)))+\xi((c_{-4s,0})_\ab(y)-y) =\overline{1}.
\end{equation}
Using~(\ref{eq:homo_gsigmab_cpq}) once more, for all $k,l\in \z$, we have:
\begin{equation*}
\xi \left((c_{-4s,0})_\ab(B_{k,l}) \right)+\xi \left(B_{k,l} \right)
= \xi \left(B_{k-4s,0} \right)+\xi \left(B_{k,0} \right)
= \xi \left(B_{k,0} \right)+\xi \left(B_{k,0} \right)=\overline{0},
\end{equation*}
from which we see that $\xi((c_{-4s,0})_\ab(y)-y)=\overline{0}$. Further, by~(\ref{eq:homo_gsigmab_theta})--(\ref{eq:homo_gsigmab_cpq}), we obtain:
\begin{equation}\label{eq:xi3bkl}
\xi \left((c_{2n-2z-4s-1,-2\delta_n m })_\ab (B_{k,l})+(\theta(m,\delta_n)_\ab \circ \rho_\ab)(B_{k,l}) \right)= \xi(B_{k-2z+2n-1,0})+\xi (B_{-k,0} ).
\end{equation}
Now:
\begin{align*}
k-2z+2n-1 \equiv 0 \bmod{4s} &\Longleftrightarrow -k \equiv -2z+2n-1 \bmod{4s}, \;\text{and}\\
k-2z+2n-1 \equiv -2z+2n-1 \bmod{4s} &\Longleftrightarrow-k \equiv 0 \bmod{4s},
\end{align*}
from which it follows using~(\ref{eq:xi3bkl}) that $\xi \left((c_{2n-2z-4s-1,-2\delta_n m })_\ab (B_{k,l})+(\theta(m,\delta_n)_\ab \circ \rho_\ab)(B_{k,l}) \right)=\overline{0}$, and hence that $\xi((c_{2n-2z-4s-1,-2\delta_n m})_\ab (x)+\theta(m,\delta_n)_\ab(\rho_\ab(x)))=\overline{0}$. Equation~(\ref{eq:xi3red}) then yields a contradiction. We thus conclude that $\alpha$ has the Borsuk-Ulam property with respect to $\tau_2$.
\end{proof}

\begin{proof}[Proof of Proposition~\ref{prop:tau_2_case_7I}] 
Suppose on the contrary that $\alpha$ does not have the Borsuk-Ulam property with respect to $\tau_2$. Applying Lemma~\ref{lem:lemma_1} with $i=j=0$, $s_{1}=s$ and $s_{2}=z$, there exist $m,n \in \z$ and $x,y \in \gsigmab$ that satisfy the following equation in~$\gsigmab$:
%\comj{some corrections in this equation as we discussed:}
%
\begin{multline}\label{eq:tau_2_case_7}
\mu(x)+\nu(y)
%+ (c_{-4s,-2\delta_{n+1}r_1})_\ab(\theta(r_1,0)_\ab(y))-y 
+(c_{-4s,0})_\ab (\widetilde{T}_{-2\delta_{n+1}r_1,\delta_{n}}) + (c_{2n-2z-4s,0})_\ab(\widetilde{O}_{2s,2\delta_{n+1}(m-r_1)+\varepsilon_{n+1}r_2}  
-\widetilde{O}_{z-n,-2\delta_{n+1}r_1}) \\
+ (c_{-4s,-2\delta_{n+1}r_1})_\ab(\widetilde{J}_{n-z,-2r_1}) 
+ (c_{0,\delta_{n+1}})_\ab(\widetilde{J}_{-2s,2\varepsilon_n r_1-2m+1})+\widetilde{O}_{-2s,\delta_{n+1}}
+(\varepsilon_n r_1-m +\delta_n)B_{0,0}\\
+(m-\delta_n)B_{-4s,-2\delta_{n+1}r_1} +r_1 B_{2n-2z-4s,2\delta_{n+1}(m-r_1)+\varepsilon_{n+1}r_2}=0,
\end{multline}
where $\map{\mu,\nu}{\gsigmab}$ are the homomorphisms defined on the basis $\{B_{k,l}\}_{k,l\in \z}$ of $\gsigmab$ by:
\begin{align*}
\mu(B_{k,l}) &= (c_{2n-2z-4s,2\delta_{n+1}(m-r_1)+\varepsilon_{n+1}r_2})_\ab (B_{k,l})+\theta(m+\varepsilon_{n+1}r_1,\delta_n)_\ab(\rho_\ab(B_{k,l})) \\
\nu(B_{k,l}) &= (c_{-4s,-2\delta_{n+1}r_1})_\ab(\theta(r_1,0)_\ab(B_{k,l}))-B_{k,l}.
\end{align*}
Using Proposition~\ref{prop:gsigma}, one may check that:
\begin{align*}
\mu(B_{k,l}) &= B_{k-2z+2n-4s,l +\varepsilon_{k}(2 \delta_{n+1}(m-r_1)+\varepsilon_{n+1}r_2)}+\varepsilon_{k+n}B_{-k,\varepsilon_{k+n+1}l-2\delta_k(m+\varepsilon_{n+1}r_1)} \\
\nu(B_{k,l}) &= B_{k-4s,l-2\delta_{n+k+1}r_1}-B_{k,l}.
\end{align*}
In what follows, we analyse in turn each of the conditions~(\ref{it:case_7I_1})--(\ref{it:case_7I_3}) of the statement of the proposition, and in each case, we will reach a contradiction.

\begin{enumerate}[(i)]
\item Suppose that $r_2  s \neq 0$, and let $\map{\xi_1}{\gsigmab}[\z]$ be the homomorphism defined on the basis $\{ B_{k,l} \}_{k,l \in \z}$ of $\gsigmab$ by $\xi_1 (B_{k,l})=\delta_{k+n}$ for all $k,l \in \z$. Then $\xi_1(B_{k,l})=\xi_1(B_{k+2t,0})$ for all $k,l,t \in \z$, from which we obtain $\xi_1 \circ \mu (B_{k,l})=\delta_{k+n} +\varepsilon_{k+n}\delta_{k+n}=0$ and
%{\bf !!!D!!! Misprint $\xi_4$??} 
$\xi_1 \circ \nu (B_{k,l})=0$. Using~Proposition~\ref{prop:gsigma}, we see that $\xi_1(\widetilde{O}_{k,l} )=kl (\delta_{n+1}-\delta_n)=\varepsilon_n kl$, 
$\xi_1(\widetilde{J}_{k,l})=-\delta_{n+1}kl$ and $\xi_1(\widetilde{T}_{k,r}) =\delta_n k$. Taking the image of~(\ref{eq:tau_2_case_7}) by~$\xi_1$, and making use of these facts, it follows that:
\begin{multline}\label{eq:multcondi}
- 2 \delta_n \delta_{n+1}r_1+\varepsilon_n(2s(2\delta_{n+1}(m-r_1)+\varepsilon_{n+1}r_2)+ 2(z-n)\delta_{n+1}r_1-2s\delta_{n+1}) \\
-\delta_{n+1}(-2s(-2m+2\varepsilon_n r_1+1)-2(n-z)r_1))+ \delta_n(\varepsilon_n r_1+r_1) =0.
\end{multline}
Applying the equalities $\delta_{n+1}\varepsilon_n=\delta_{n+1}$, $\delta_n \delta_{n+1}=0$ and $\delta_n(1+\varepsilon_n)=0$ to~(\ref{eq:multcondi}), we obtain $-2 r_2 s=0$, which contradicts the hypothesis. 

\item Suppose that $r_1 > 0$, $r_2$ is even and $z = 0$, and let $\map{\xi_2}{\gsigmab}[\ztwo]$ be the homomorphism defined on the basis $\{ B_{k,l} \}_{k,l \in \z}$ of $\gsigmab$ by:
\begin{equation*}
\xi_2 (B_{k,l})=\begin{cases}
\overline{k+n+1} & \text{if  $l \equiv \varepsilon_n m-r_2/2 \bmod{2 \lvert r_1\rvert}$}\\
\overline{0} & \text{otherwise.}
\end{cases}
\end{equation*}
So $\xi_2 (B_{k,l})=\xi_2 (B_{k+2t,l+2u r_1})$ for all $k,l,t,u \in \z$, and $\xi_2 (B_{k,l})=\overline{0}$ if $k+n$ is odd. Also, by Proposition~\ref{prop:gsigma} we have:
\begin{equation}\label{eq:cpq}
\text{$\xi_2 \circ (c_{p,q})_\ab=\xi_2 \circ (c_{p+2t,q+2ur_1})_\ab$ for all $p,q,t,u \in \z$.}
\end{equation}
%\comj{This added to shorten some of the following calculations:}
To analyse the image by $\xi_{2}$ of the terms of~(\ref{eq:tau_2_case_7}), note that if $k\in \z$ is even, $j\in \z$, 
%$\map{f}{\z^{2}}[\z]$ is a function, and $\rho_{1},\rho_{2}\in \{0,1\}$ 
and $\rho_{1},\rho_{2}\in \z$ then:
\begin{equation*}
\xi_{2}\Biggl(\sum_{i=1}^{\lvert k \rvert} B_{\rho_{1} \sigma_{k}(2i-1)+\rho_{2},j}\Biggr)= \sum_{i=1}^{\lvert k \rvert} \xi_{2}(B_{\rho_{2}-\rho_{1} \sigma_{k},j})= \overline{k} \xi_{2}(B_{\rho_{2}-\rho_{1} \sigma_{k},j})=\overline{0}. 
%\xi_{2}\Biggl(\sum_{i=1}^{\lvert k \rvert} B_{\rho_{1} \sigma_{k}(2i-1)+\rho_{2},f(j,k)}\Biggr)= \sum_{i=1}^{\lvert k \rvert} \xi_{2}(B_{\rho_{2}-\rho_{1} \sigma_{k},f(j,k)})= \overline{k} \xi_{2}(B_{\rho_{2}-\rho_{1} \sigma_{k},f(j,k)})=\overline{0}. 
\end{equation*}
%{\bf !!! V !!! $\rho$ aparece em (\ref{eq:tau_2_case_7}). Não é importante que $\rho_1,\rho_2 \in \{0,1\}$. Pode ser $\rho_1,\rho_2 \in \z$. Acho que na conta acima podemos trocar $\rho(j,k)$ simplesmente por $j$, pois não depende de $i$.}
In particular, if $k$ is even and $l\in \z$ then it follows from Proposition~\ref{prop:gsigma} that:
\begin{equation}\label{eq:jkl0}
\xi_{2}(\widetilde{J}_{k,l})=\xi_{2}(\widetilde{O}_{k,l})=\overline{0}. 
\end{equation}
Using~(\ref{eq:cpq}) and~(\ref{eq:jkl0}), we conclude that the images by $\xi_{2}$ of the terms $(c_{0,\delta_{n+1}})_\ab(\widetilde{J}_{-2s,2\varepsilon_n r_1-2m+1})$, $\widetilde{O}_{-2s,\delta_{n+1}}$ and $(c_{2n-4s,0})_\ab(\widetilde{O}_{2s,2\delta_{n+1}(m-r_1)+\varepsilon_{n+1}r_2} )$ of~(\ref{eq:tau_2_case_7}) are all equal to $\overline{0}$. Similarly, we have $\xi_2\circ (c_{2n-4s,0})_\ab(\widetilde{O}_{-n,-2\delta_{n+1}r_1})=\overline{0}$ if $n$ is even, while if $n$ is odd, $\xi_2\circ (c_{2n-4s,0})_\ab(\widetilde{O}_{-n,-2\delta_{n+1}r_1})=\overline{0}$ by Proposition~\ref{prop:gsigma}. It follows that $\xi_2\circ (c_{2n-4s,0})_\ab(\widetilde{O}_{-n,-2\delta_{n+1}r_1})=\overline{0}$ for all $n$. Thus the image of~(\ref{eq:tau_2_case_7}) by $\xi_{2}$ yields:
\begin{equation}\label{eq:tau_2_case_7a}
\xi_{2}(\mu(x)+\nu(y)
%+ (c_{-4s,-2\delta_{n+1}r_1})_\ab(\theta(r_1,0)_\ab(y))-y 
+ \widetilde{T}_{-2\delta_{n+1}r_1,\delta_{n}}+ \widetilde{J}_{n,-2r_1} 
+\chi)=\overline{0},
\end{equation}
where $\chi=(\varepsilon_n r_1-m +\delta_n)B_{0,0}+(m-\delta_n)B_{-4s,-2\delta_{n+1}r_1}+r_1 B_{2n-4s,2\delta_{n+1}(m-r_1)+\varepsilon_{n+1}r_2}$. We now compute each of the terms of~(\ref{eq:tau_2_case_7a}).
\begin{enumerate}[(a)]
\item\label{it:tr1} Let us show that $\xi_{2}(\widetilde{T}_{-2\delta_{n+1}r_1,\delta_{n}})=\overline{n+1}$ for all $n\in \z$. To see this, if $n$ is odd then $\widetilde{T}_{-2\delta_{n+1}r_1,\delta_{n}}= \widetilde{T}_{0,1}=0$ by Proposition~\ref{prop:gsigma}, and so $\xi_{2}(\widetilde{T}_{-2\delta_{n+1}r_1,\delta_{n}})=\overline{0}=\overline{n+1}$. So suppose that $n$ is even. Since $r_{1}>0$, we have:
\begin{equation*}
\xi_{2}(\widetilde{T}_{-2\delta_{n+1}r_1,\delta_{n}})=\xi_{2}(\widetilde{T}_{-2r_1,0})=\sum_{i=1}^{2r_{1}} \xi_{2}(B_{0,1-i}). 
\end{equation*}
Since the set $\{ 1-i \, | \, 1\leq i\leq 2r_{1}\}$ contains precisely one element that is congruent to $m-r_2/2$ modulo $2r_{1}$, it follows that $\xi_{2}(\widetilde{T}_{-2\delta_{n+1}r_1,\delta_{n}})=\overline{n+1}$, which proves the result.

\item We claim that $\xi_{2}(\widetilde{J}_{n,-2r_1})=\overline{n}$ for all $n\in \z$. To see this, if $n$ is even then $\xi_{2}(\widetilde{J}_{n,-2r_1})=\overline{0}=\overline{n}$ by~(\ref{eq:jkl0}). So suppose that $n$ is odd. Then $n\neq 0$, and since $r_{1}>0$, we have:
\begin{equation*}
\xi_{2}(\widetilde{J}_{n,-2r_1})= \sum_{i=1}^{\lvert n \rvert} \sum_{j=1}^{2 r_1} \xi_2 (B_{\sigma_{n}(2i-1), -j})= \sum_{i=1}^{\lvert n \rvert} \sum_{j=1}^{2 r_1} \xi_2 (B_{-\sigma_{n}, -j})= \sum_{j=1}^{2 r_1} \xi_2 (B_{-\sigma_{n}, -j}).
\end{equation*}
As in case~(\ref{it:tr1}), it follows that $\xi_{2}(\widetilde{J}_{n,-2r_1})=\overline{-\sigma_{n}+n+1}=\overline{n}$, which proves the claim.

\item Let us show that the homomorphisms $\xi_{2}\circ \mu$ and $\xi_{2}\circ \nu$ are both zero. It suffices to prove that they are zero on the elements of the basis $\{ B_{k,l} \}_{k,l\in \z}$ of $\gsigmab$. Let $k,l\in \z$. Then:
\begin{equation}\label{eq:tau_2_case_7_aux1a}
\xi_2 \circ \mu (B_{k,l})= \xi_{2}(B_{k,l+2\delta_{n+1}\varepsilon_k m+\varepsilon_{k+n+1} r_2})+ \xi_2 (B_{k,\varepsilon_{n+k+1}l-2\delta_k m}).
\end{equation}
If $k+n$ is odd then it follows from~(\ref{eq:tau_2_case_7_aux1a}) and the definition of $\xi_{2}$ that $\xi_2 \circ \mu (B_{k,l}) =\overline{0}$. So suppose that $k+n$ is even. Then $\varepsilon_k=\varepsilon_n$ and $\varepsilon_{n}(1- 2\delta_{n+1})=-1=-(\varepsilon_n+2\delta_k)$. %\comj{in the last term, it was written $2\delta_nk$, but I think that it is $2\delta_k$.} 
Hence:
\begin{align*}
l+2\delta_{n+1}\varepsilon_k m-r_2 \equiv \varepsilon_n m -r_2/2 \bmod{2r_1}  &\Longleftrightarrow 
l  \equiv \varepsilon_n(1-2\delta_{n+1})m+r_2/2 \bmod{2r_1}\\
&\Longleftrightarrow
-l  \equiv (\varepsilon_n+2\delta_k)m-r_2/2 \bmod{2r_1}\\
&\Longleftrightarrow
-l-2\delta_{k}m  \equiv \varepsilon_n m -r_2/2 \bmod{2r_1}.
\end{align*}
So the terms on the right hand-side of~(\ref{eq:tau_2_case_7_aux1a}) take the same value in $\ztwo$, and thus $\xi_2 \circ \mu (B_{k,l})=\overline{0}$. Since $\xi_{2}(B_{k-4s,l-2\delta_{n+k+1}r_1})=\xi_{2}(B_{k,l})$, it follows that $\xi_{2}\circ \nu(B_{k,l})= \xi_{2}(B_{k,l})+\xi_{2}(B_{k,l})=\overline{0}$, which proves the result.

\item\label{it:tr4} Using the definition of $\xi_{2}$ and the previous calculation, we have:
\begin{align*}
\xi_2(\chi)&=\overline{(-m +\varepsilon_n r_1 +\delta_n)}\xi_2(B_{0,0})+ \overline{(-\delta_n+m)}\xi_{2}(B_{0,0})+\overline{r_{1}}\xi_{2}(B_{2n-4s,2\delta_{n+1}(m-r_1)+\varepsilon_{n+1}r_2})\\
&=\overline{r_1}\xi_2(\varepsilon_n B_{0,0})+\overline{r_{1}}\xi_{2}(B_{2n-4s,2\delta_{n+1}(m-r_1)+\varepsilon_{n+1}r_2})\\
&= \overline{r_1}\xi_2\bigl(\theta(m+\varepsilon_{n+1}r_1,\delta_n)_\ab(\rho_\ab(B_{0,0}))+ (c_{2n-4s,2\delta_{n+1}(m-r_1)+\varepsilon_{n+1}r_2})_{\ab}(B_{0,0})\bigr)\\
&=\overline{r_1}\xi_2\circ \mu(B_{0,0})=\overline{0}.
%\overline{r_{1}}\xi_{2}(B_{0,2\delta_{n+1}m+\varepsilon_{n+1}r_2})=\overline{r_1}(\xi_2(B_{0,0})+\xi_{2}(B_{0,2\delta_{n+1}m+\varepsilon_{n+1}r_2}))\\
%%(-m +\varepsilon_n r_1 +\delta_n)B_{0,0}+(-\delta_n+m)B_{-4s,-2\delta_{n+1}r_1} +r_1 B_{2n-4s,2\delta_{n+1}(m-r_1)+\varepsilon_{n+1}r_2} ) \\
%&= (2m +2\delta_n) \xi_2 (B_{0,0}) +r_1 \xi_2 (B_{-2j+2n-4s,2\delta_{n+1}(m-r_1)+\varepsilon_{n+1}r_2}+\varepsilon_n  B_{0,0}) \\
%&= \overline{0}+r_1 \mu(B_{0,0})=\overline{0}.
\end{align*}
\end{enumerate}
Substituting the results of~(\ref{it:tr1})--(\ref{it:tr4}) in~(\ref{eq:tau_2_case_7a})yields a contradiction in this case.

\item Finally, suppose that $r_1=r_2=z=0$ and $s \neq 0$. Let $\map{\xi_3}{\gsigmab}[\ztwo]$ be the homomorphism defined on the basis $\{ B_{k,l} \}_{k,l \in \z}$ of $\gsigmab$ by:
\begin{equation*}
\xi_3 (B_{k,l})=\begin{cases}
\overline{1} & \text{if  $k \equiv n \bmod{4\lvert s \rvert}$}\\
\overline{0} & \text{otherwise.}
\end{cases}
\end{equation*}
Then $\xi_3(B_{k+4ts,l})=\xi_3(B_{k,0})$ for all $k,l,t\in\z$. It follows from this equality that $\xi_3\circ\nu(y)=\overline{0}$, that $\xi_{3}\circ (c_{0,\delta_{n+1}})_\ab(\widetilde{J}_{-2s,1-2m})=\xi_{3}(\widetilde{J}_{-2s,1-2m})$, and that $\xi_{3}((\delta_n-m)B_{0,0}+(m-\delta_n)B_{-4s,0})=\overline{0}$. Further, since $k + 2n \equiv n \bmod{4 \lvert s \rvert}$ if and only if $-k  \equiv n \bmod{4 \lvert s \rvert}$, we see from the definition of $\mu$ that $\xi_3(\mu(x)) = \overline{0}$. Applying $\xi_3$ to~(\ref{eq:tau_2_case_7}) and making use of Proposition~\ref{prop:gsigma}, we obtain:
\begin{equation}\label{eq:tau_2_case_7b}
%\xi_3(\mu(x)) + 
\xi_3((c_{2n-4s,0})_\ab(\widetilde{O}_{2s,2\delta_{n+1}m})) + \xi_3(\widetilde{J}_{-2s,1-2m}) + \xi_3(\widetilde{O}_{-2s,\delta_{n+1}}) = \overline{0}.
\end{equation}
It remains to compute each of the terms of~(\ref{eq:tau_2_case_7b}).
\begin{enumerate}[(a)]
%\item We claim that $\xi_3(\mu(x)) = \overline{0}$. To prove the claim, it suffices to show that $\xi_3(\mu(B_{k,l})) = \overline{0}$ for all $k,l \in \z$. Note that:
%%
%\begin{align*}
%k + 2n \equiv n \bmod{4 \lvert s \rvert}  &\Longleftrightarrow 
%k  \equiv -n \bmod{4 \lvert s \rvert}
%\Longleftrightarrow
%-k  \equiv n \bmod{4 \lvert s \rvert}.
%\end{align*}
%%
%So we have $\xi_3(B_{k+2n,0}) = \xi_3(B_{-k,0})$ from which we conclude that $\xi_3(\mu(B_{k,l})) = \xi_3(B_{k+2n,0}) + \xi_3(B_{-k,0}) = \overline{0}$.
%
\item\label{it:xi3a} Let us show that $\xi_3((c_{2n-4s,0})_\ab(\widetilde{O}_{2s,2\delta_{n+1}m}))= \overline{0}$. If $n$ is odd or $m=0$, this is clearly the case.
% then $\xi_3(c_{2n-4s,0}(\widetilde{O}_{s,2\delta_{n+1}m})) = \xi_3(0) = \overline{0}$. 
So suppose that $n$ is even and $m \neq 0$. Then by Proposition~\ref{prop:gsigma} we have:
\begin{align*}
\xi_3((c_{2n-4s,0})_\ab(\widetilde{O}_{2s,2\delta_{n+1}m}))
& = \displaystyle \sum_{i=1}^{2\lvert s \lvert } \sum_{j=1}^{2 \lvert m\rvert} \left( \xi_3 \left( B_{\sigma_s(2i-1)+2n-4s,0} - B_{\sigma_s(2i-1)+2n-4s-1,0}\right) \right) \\
& = 2 \lvert m \rvert \displaystyle \sum_{i=1}^{2\lvert s \lvert }  \left( \xi_3 \left( B_{\sigma_s(2i-1)+2n-4s,0} - B_{\sigma_s(2i-1)+2n-4s-1,0}\right) \right)  = \overline{0}.
\end{align*}
\item We claim that $\xi_3(\widetilde{J}_{-2s,-2m+1}) = \overline{n}$. To see this, by Proposition~\ref{prop:gsigma} we have:
\begin{equation*}
\xi_3(\widetilde{J}_{-2s,1-2m})
= \displaystyle \sum_{i=1}^{2 \lvert s \rvert} \sum_{j=1}^{\lvert 2m-1 \rvert} \xi_3(B_{-\sigma_s(2i-1),0})
= \sum_{i=1}^{2 \lvert s \rvert}  \xi_3(B_{-\sigma_s(2i-1),0}).
\end{equation*}
Let $A=\{ -\sigma_s(2i-1) \, | \, 1 \leq i \leq 2\lvert s \rvert \}$. Then $A$ consists of all odd integers between $-\sigma_s$ and $-\sigma_s\lvert 4s \rvert$. So if $n$ is even (resp.\ odd) then there is no element (resp. exactly one element) of $A$ that is congruent to $n\bmod{4\lvert s \rvert}$, and the claim follows.

%Suppose that $s<0$. Note that the set $A = \{ 2i-1 \, | \, 1 \leq i \leq 2s \}$ consists off all odd integers numbers between $1$ and $4s$. Therefore if $n$ is even then there is not one element of $A$ that is congruent to $n$ module $4 \lvert s \rvert$ while if $n$ is odd there is exactly one element of $A$ that is congruent to $n$ module $4 \lvert s \rvert$. Thus $\mu_3(\widetilde{J}_{-2s,-2m+1})=\overline{n}$ if $s <0$. The same conclusion is true if $s>0$ and the analysis is similar.
%
\item\label{it:xi3c} Let us show that $\xi_3(\widetilde{O}_{-2s,\delta_{n+1}})=\overline{n+1}$. If $n$ is odd the result is clear. So suppose that $n$ is even. By Proposition~\ref{prop:gsigma} we have:
\begin{equation*}
\xi_3(\widetilde{O}_{-2s,\delta_{n+1}})
= \displaystyle \sum_{i=1}^{2 \lvert s \rvert}\left( \xi_3(B_{-\sigma_s(2i-1),0}) + \xi_3(B_{-\sigma_s(2i-1)-1,0}) \right).
\end{equation*}
Let $A = \{ -\sigma_s(2i-1) \, | \, 1 \leq i \leq 2 \lvert s \rvert  \}$ and $B =  \{ -\sigma_s(2i-1)-1 \, | \, 1 \leq i \leq 2 \lvert s \rvert  \}$. Note that $A$ and $B$ are disjoint, and if $s<0$ (resp.\ $s>0$), $A \cup B$ is equal to $\{0 ,1, \ldots , 4 \lvert s \rvert - 1\}$ (resp.\ to $\{-1,-2, \ldots , -4 \lvert s \rvert\}$) So there is exactly one element of $A \cup B$ that is congruent to $n\bmod{4 \lvert s \rvert}$, and therefore $\xi_3(\widetilde{O}_{-2s,\delta_{n+1}}) = \overline{1} = \overline{n+1}$ as required. 
%The same conclusion is true if $s>0$ and the analysis is similar.
\end{enumerate}
We obtain a contradiction by substituting the results of~(\ref{it:xi3a})--(\ref{it:xi3c}) in~(\ref{eq:tau_2_case_7b}), and we conclude that $\alpha$ possesses the Borsuk-Ulam property with respect to~$\tau_2$.\qedhere
%From the above steps we obtain the equality $\overline{1} = \overline{0}$ in $\ztwo$ which is an absurd. 
\end{enumerate}
\end{proof}

\begin{proof}[Proof of Proposition~\ref{prop:tau_2_case_6I}] 
%\comj{This basically follows the previous proof, but some cases have been combined and some calculations have been simplified slightly using previous computations.}
To prove that $\alpha$ does not have the Borsuk-Ulam property with respect to $\tau_2$, it suffices to exhibit elements $a,b\in P_2(\klein)$ that satisfy conditions~(\ref{eq:algebra_i})--(\ref{eq:algebra_iii}) of Lemma~\ref{lem:algebra_tau_2}. 
We define these elements as follows:
\begin{enumerate}
\item\label{it:case6_a}
\begin{enumerate}
\item\label{it:case6_ai} if $z=0$ and $r_{2}$ is even, let $a=(\id;0,0)$ and $b=(\id;r_2/2,0)$.
\item\label{it:case6_aii} if $z=1$, let $a=(v^{-2s} (u^{2r_1-1} v^{-1})^{2s} B^{-r_1}; r_1 , 2s)$ and $b =(u^{-\omega_s r_2} B^{1-\omega_s};0,1)$
%  \comj{$b$ has been corrected following our discussion.}
\end{enumerate}
\item\label{it:case6_b} if $z=0$ and $r_{2}$ is odd, let $a= a_1^{r_1}$ and $b=(b_1 \sigma)^{-r_2} \sigma^{-1}$, where $a_1=(u^{-2};1,0)$ and $b_1=(u^{-1};0,0)$.
\end{enumerate}
We start by considering case~(\ref{it:case6_a}). First, $(p_{1})_{\#}(a)=(0,0)$ in case~(\ref{it:case6_a})(\ref{it:case6_ai}) and $(p_{1})_{\#}(a)=(r_{1},2s)$ in case~(\ref{it:case6_a})(\ref{it:case6_aii}), and it follows that $(p_{1})_{\#}(a)=((z +(1-z)\delta_{r_2})r_1, 2zs) =\alpha_\#(1,0)$, so Lemma~\ref{lem:algebra_tau_2}(\ref{eq:algebra_ii}) holds. Secondly, by taking $a=b$ in~(\ref{eq:blsiga}), we obtain $(p_1)_\# (b l_\sigma(b))=(r_{2},0)$ in case~(\ref{it:case6_a})(\ref{it:case6_ai}) and $(p_1)_\# (b l_\sigma(b))=(\omega_{s} r_{2},2)$   in case~(\ref{it:case6_a})(\ref{it:case6_aii}), and it follows that $(p_1)_\# (b l_\sigma(b))=(\omega_{zs}r_2 , 2z)=\alpha_\#(0,1)$,  so Lemma~\ref{lem:algebra_tau_2}(\ref{eq:algebra_iii}) is satisfied. In case~(\ref{it:case6_a})(\ref{it:case6_ai}), it is clear that $abl_{\sigma}(a)=b$,  so Lemma~\ref{lem:algebra_tau_2}(\ref{eq:algebra_i}) holds, and thus $\alpha$ does not have the Borsuk-Ulam property with respect to $\tau_2$ in this case. We now show that Lemma~\ref{lem:algebra_tau_2}(\ref{eq:algebra_i}) is satisfied in case~(\ref{it:case6_a})(\ref{it:case6_aii}). Taking $a=(u^{2r_1-1} v^{-1}; 0,0)$ and $b=(\id;0,0)$ in~(\ref{eq:blsiga}) and using Proposition~\ref{prop:p2_k2} and the fact that $B=uvuv^{-1}$, we obtain:
%{\bf !!! V !!! Nas contas abaixo existe excesso de parenteses}
\begin{align*}
l_{\sigma}(u^{2r_1-1} v^{-1}; 0,0)&= ((Bu^{-1})^{2r_{1}-1} B^{1-2r_{1}} \theta(2r_{1}-1,0)(uvuB); 2r_{1}-1,-1)\\
&= (uvu^{2r_{1}-1}v^{-1}u^{-1}\ldotp B^{2r_1-1} \ldotp B^{1-2r_{1}} uvu^{2(1-2r_{1})}u B^{1-2r_{1}} \ldotp B ; 2r_{1}-1,-1)\\
&=(uvu^{2-2r_{1}}B^{2-2r_{1}}; 2r_{1}-1,-1).
\end{align*}
So:
\begin{align*}
(l_{\sigma}(u^{2r_1-1} v^{-1}; 0,0))^{2s}=& ((uvu^{2-2r_1}B^{2-2r_1} ;2r_1-1,-1)(uvu^{2-2r_1}B^{2-2r_1} ;2r_1-1,-1))^{s}\\
=& (uvu^{2-2r_1}B^{2-2r_1} B^{2r_1-2}u^{-1}B^{2-2r_1} B^{2r_1-1}vu^{2-4r_1}B^{2-2r_1}\ldotp\\
& (B^{2r_1-2}u^{-1}B^{2-2r_1})^{2-2r_1}B^{2r_1-2};0,-2)^{s} \\
=& (uvu^{1-2r_1}Bvu^{2-4r_1}u^{2r_1-2};0,-2)^{s}=(uvu^{2-2r_1}vu^{1-2r_1};0,-2)^{s} \\
=& (B(vu^{1-2r_1})^2;0,-2)^{s}= ((B(vu^{1-2r_1})^2)^{s};0,-2s).
\end{align*}
Now:
\begin{align*}
\theta(r_{1},2s+1)((Bv^{2})^{s}) &= \theta(r_{1},2s+1)((uv)^{2s})=(B^{r_1-1}(u^{-1}Bvu^{-2r_{1}})B^{1-r_1})^{2s}\\
&=(B^{r_1-1}(vu^{1-2r_{1}})B^{1-r_1})^{2s})=B^{r_1-1}(vu^{1-2r_{1}})^{2s} B^{1-r_1},
\end{align*}
and
\begin{align*}
\theta(r_{1},2s+1)((B(vu^{1-2r_{1}})^{2})^{s})&=(B^{-1} (B^{r_{1}}vu^{-2r_{1}}B^{1-r_{1}}\ldotp B^{r_{1}-1} u^{2r_{1}-1} B^{1-r_{1}})^{2})^{s}\\
&= (B^{-1} (B^{r_{1}}vu^{-1}B B^{-r_{1}})^{2})^{s}= (B^{r_{1}-1} vu^{-1}Bvu^{-1} B^{1-r_{1}})^{s}\\
&= (B^{r_{1}-1} v^{2}B^{1-r_{1}})^{s}= B^{r_{1}-1} v^{2s}B^{1-r_{1}}.
\end{align*}
Hence:
\begin{align*}
l_\sigma(a) &=l_\sigma(v^{-2s};0,0) (l_\sigma (u^{2r_1-1}v^{-1};0,0))^{2s}  l_\sigma (B^{-r_1};r_1,2s) \\
&= ((uv)^{2s};0,-2s) ((B(vu^{1-2r_1})^2)^{s};0,-2s) (B^{-r_1};r_1,2s)\\
&= ((uvuv^{-1} v^2)^s;0,-2s) ((B(vu^{1-2r_1})^2)^{s};0,-2s) (B^{-r_1};r_1,2s)\\
&= ((Bv^2)^s (B(vu^{1-2r_1})^2)^sB^{-r_1};r_1,-2s).
\end{align*}
If $s=0$ then:
\begin{align*}
a b l_\sigma(a) &=(B^{-r_1};r_1,0)(u^{-r_2};0,1)(B^{-r_1};r_1,0)=(B^{-r_1}(B^{r_1} u B^{-r_1} )^{-r_2} ; r_1,1  )(B^{-r_1};r_1,0) \\
&=(u^{-r_2} B^{-r_1} B^{r_1};0,1)=(u^{-r_2};0,1)=b,
\end{align*}
while if $s\neq 0$ then:
\begin{align*}
a b l_\sigma (a) &= (v^{-2s} (u^{2r_1-1} v^{-1})^{2s} B^{-r_1}; r_1,2s)(B ; 0 , 1)((Bv^2)^s (B(vu^{1-2r_1})^2)^s B^{-r_1};r_1,-2s) \\
&= (v^{-2s} (u^{2r_1-1} v^{-1})^{2s} B^{1-r_1}; r_1 ,2s+1)((Bv^2)^s (B(vu^{1-2r_1})^2)^s B^{-r_1};r_1,-2s) \\
&= (v^{-2s} (u^{2r_1-1} v^{-1})^{2s} B^{1-r_1} \ldotp B^{r_1-1}(vu^{1-2r_{1}})^{2s} B^{1-r_1} \ldotp B^{r_{1}-1} v^{2s}B^{1-r_{1}}\ldotp B^{r_1}; 0,1)\\
&= (B;0,1)=b.
\end{align*}
So Lemma~\ref{lem:algebra_tau_2}(\ref{eq:algebra_i}) is satisfied in case~(\ref{it:case6_a})(\ref{it:case6_aii}), and hence $\alpha$ does not have the Borsuk-Ulam property with respect to $\tau_2$ in this case. 

We now consider case~(\ref{it:case6_b}). First, $(p_1)_\#(a)=(p_1)_\# (a_1)^{r_1}=(1,0)^{r_1}=(r_1,0)= \alpha_\# (1,0)$, so Lemma~\ref{lem:algebra_tau_2}(\ref{eq:algebra_ii}) holds. Further:
\begin{align*}
b l_\sigma(b) &=(b_1 \sigma)^{-r_2} \sigma^{-1} \sigma (b_1 \sigma)^{-r_2} \sigma^{-1} \sigma^{-1} 
=(b_1 \sigma b_1 \sigma)^{-r_2} B^{-1}=(b_1 l_\sigma (b_1) B)^{-r_2} B^{-1}, 
\end{align*}
and hence:
\begin{align*}
(p_1)_\# (b l_\sigma(b)) &= ((p_1)_\# (b_1) (p_1)_\# (l_\sigma (b_1)) (p_1)_\# (B))^{-r_2} (p_1)_\# (B^{-1})=(-1,0)^{r_2}=(r_2,0)=\alpha_\# (0,1),
\end{align*}
so Lemma~\ref{lem:algebra_tau_2}(\ref{eq:algebra_iii}) is satisfied. It remains to show that Lemma~\ref{lem:algebra_tau_2}(\ref{eq:algebra_i}) holds. 
By~(\ref{eq:a_b_lsigma_a1}), we have:
\begin{align*}
a_1 b_1 l_\sigma(a_1) &= (u^{-2} \theta(1,0)(u^{-1}) \theta(1,0)((Bu^{-1})^{-2} B^{2}); 0,0)=(u^{-2} \theta(1,0)(u^{-1}(Bu^{-1})^{-2} B^{2}); 0,0)\\
&= (u^{-2} \theta(1,0)(B^{-1}uB); 0,0)= (u^{-1};0,0)=b_1.
\end{align*}
So $l_\sigma(a_1)=b_1^{-1}a_1^{-1} b_1$, hence $\sigma a \sigma^{-1}= l_\sigma(a)=b_1^{-1}a^{-1} b_1$, and thus $a^{-1}=(b_1\sigma)^{-1}a (b_1\sigma)$. Since $r_{2}$ is odd, it follows that $a^{-1}=(b_1\sigma)^{-r_{2}} a (b_1\sigma)^{r_{2}}= b l_\sigma(a) b^{-1}$, which implies that $ab l_\sigma(a)=b$. So Lemma~\ref{lem:algebra_tau_2}(\ref{eq:algebra_i}) is satisfied in case~(\ref{it:case6_a})(\ref{it:case6_aii}), and therefore $\alpha$ does not have the Borsuk-Ulam property with respect to $\tau_2$ in this case. 
\end{proof}

\section*{Ackowledgements}

This paper was completed during the Postdoctoral Internship of the third author at IME-USP from March 2020 to August 2021. He was supported by Capes/INCTMat project n\textsuperscript{o} 8887.136371/2017-00-465591/2014-0. The first author is partially supported by the Projeto Temático FAPESP, \emph{Topologia Alg\'ebrica, Geom\'etrica e Diferencial}, grant n\textsuperscript{o} 2016/24707-4.

\end{document}